\newtheorem{thm}{Theorem}[section]
\newtheorem{mth}{Theorem}
\newtheorem{lem}[thm]{Lemma}
\theoremstyle{definition}
\theoremstyle{remark}
\newtheorem*{defn}{Definition}
\newtheorem*{rem}{Remark}
\numberwithin{equation}{section}
\newcommand{\nbh}{\Omega}
\newcommand{\ti}{\textit{t.i.}}
\newcommand{\lti}{\textit{u.t.i.}}
\newcommand{\bfR}{\mathbf{R}}
\newcommand{\calF}{{\mathcal F}}
\newcommand{\calR}{{\mathcal R}}
\newcommand{\calW}{{\mathcal W}}
\newcommand{\bfK}{{\mathbf K}}
\newcommand{\bulk}{\operatorname{bulk}}
\newcommand{\Int}{\operatorname{Int}}
\newcommand{\La}{L}
\newcommand{\eqpot}{\check{Q}}
\newcommand{\config}{\Theta}
\newcommand{\erfc}{\operatorname{erfc}}
\newcommand{\Bulk}{\operatorname{bulk}}
\newcommand{\R}{{\mathbb R}}
\newcommand{\C}{{\mathbb C}}
\newcommand{\fii}{{\varphi}}
\newcommand{\const}{\mathrm{const.}}
\newcommand{\eps}{{\varepsilon}}
\newcommand{\re}{\operatorname{Re}}
\newcommand{\im}{\operatorname{Im}}
\newcommand{\Pol}{\mathcal{W}}
\newcommand{\Prob}{{\mathbf{P}}}
\newcommand{\Tr}{\operatorname{Tr}}
\renewcommand{\d}{{\partial}}
\newcommand{\dbar}{\bar{\partial}}
\newcommand{\1}{\chi}
\newcommand{\dist}{\operatorname{dist}}
\newcommand{\supp}{\operatorname{supp}}
\newcommand{\Lap}{\Delta}
\newcommand{\lnorm}{\left\|}
\newcommand{\rnorm}{\right\|}
\newcommand{\drop}{S}
\def\norm#1{\lnorm {#1} \rnorm}
\def\lpar{\left (}
\def\rpar{\right )}
\def\labs{\left |}
\def\rabs{\right |}
\def\babs#1{\labs {#1} \rabs}
\begin{document}

\title{A density theorem for weighted Fekete sets}

\subjclass[2010]{31C20}

\author{Yacin Ameur}

\address{Yacin Ameur\\
Department of Mathematics\\
Faculty of Science\\
Lund University\\
P.O. BOX 118\\
221 00 Lund\\
Sweden}

\email{Yacin.Ameur@maths.lth.se}

\begin{abstract} We prove a result concerning the spreading of weighted Fekete points in the plane.
\end{abstract}

\maketitle

Consider a system $\{\zeta_j\}_1^n$ of identical point-charges in the complex plane $\C$, subject to an external field $nQ$, where $Q$ is a suitable real-valued function, which we call the "external potential''. The energy of the system is taken to be
\begin{equation}\label{energy}H_n(\zeta_1,\ldots,\zeta_n)=\sum_{j\ne k}\log\frac 1 {\babs{\,\zeta_j-\zeta_k\,}}
+n\sum_{j=1}^nQ(\zeta_j).\end{equation}
A configuration $\calF_n=\{\zeta_j\}_1^n$ which minimizes $H_n$ is called an $n$-\textit{Fekete set} in external potential $Q$; the points of a Fekete set are called \textit{Fekete points}.

The analog of $H_n$ for continuous charge distributions (measures) $\mu$
is the \textit{weighted logarithmic
energy} in external potential $Q$, defined by
\begin{equation}\label{egy}I_Q[\mu]=\iint_{\C^2}\log \frac 1 {\babs{\,\zeta-\eta\,}}\, d\mu(\zeta)d\mu(\eta)+
\int_\C Q\, d\mu.\end{equation}
The classical \textit{equilibrium measure} $\sigma$ minimizes $I_Q$ amongst all Borel probability measures. This measure is absolutely continuous and has the form
\begin{equation}\label{eqm}d\sigma(\zeta)=\chi_S(\zeta) \Lap Q(\zeta)\, dA(\zeta),
\end{equation}
where the set $S:=\supp\sigma$ is called the \textit{droplet} in external field $Q$.
(See below for further notation.)

 A well-known "discretized'' result asserts that that the normalized counting measures $\mu_n=\frac 1 n\sum_1^n \delta_{\zeta_j}$ at Fekete points converge to the equilibrium measure $\sigma$, as $n\to\infty$. (See e.g. \cite{ST}, Section III.1.)

In this note, we study finer structure of the distribution of Fekete points. We shall prove that Fekete points are maximally spread out in $S$ relative to the conformal metric $ds^2=\Lap Q(\zeta) |d\zeta|^2$ in the
sense of Beurling-Landau densities. Our results generalize those of \cite{AOC}.

\subsection*{Notation} We write $\Lap=\frac 1 4(\d^2/\d x^2+\d^2/\d y^2)$ for $1/4$ times the standard Laplacian and
$dA(z)=d^2\,z/\pi$ is Lebesgue measure in $\C$ normalized so that the unit disk has measure $1$.
The symbols $\d=\frac 1 2(\d/\d x-i\d/\d y)$ and $\dbar=\frac 1 2(\d/\d x+i\d/\d y)$ for the complex derivatives; thus $\Lap=\d\dbar$. The characteristic function of a set $E$ is denoted $\chi_E$. We write $\bar{z}$ or $z^*$ for the complex conjugate of $z$.

\section{Introduction; Main Results}

\subsection{Precise assumptions on the external potential} \label{ptd}
We assume throughout that $Q$ is lower semicontinuous on $\C$ with values in $\R\cup\{+\infty\}$.
Let $\Sigma_0=\{Q<\infty\}$; we assume that the interior $\Int \Sigma_0$ is dense in $\Sigma_0$ and that
$Q$ is real-analytic on $\Int\Sigma_0$. Moreover, $Q$ is assumed to grow rapidly at $\infty$ in the sense that \begin{equation}\label{groww}\liminf_{\zeta\to\infty}\frac {Q(\zeta)}{\log|\,\zeta\,|^{\,2}}>1.\end{equation}

In addition, we make the standing assumption that the droplet $S=\supp\sigma$ is contained in $\Int\Sigma_0$ and that
$Q$ is strictly subharmonic in a neighbourhood of $S$.
This means that Sakai's regularity theorem can be applied; it implies that the boundary $\d S$ has important analytical properties which we recall below. More details about the role of real-analyticity, as well as a formal derivation of the relevant properties of the boundary from Sakai's theory in \cite{Sa,Sa2}, can be found in the concluding remarks (Section \ref{concrem}).

\subsection{Droplets} \label{drp}
Let $p_*\in \d S$ and let $\fii$ be a conformal map from the upper half-plane $\C_+$ to the component $U$ of $S^c=\C\setminus S$ which has $p_*\in \d U$ and $\fii(0)=p_*$. It follows from Sakai's regularity theorem
that the mapping $\fii$ extends analytically across
$\d U$. This leaves three mutually exclusive possibilities: (i) $\fii'(0)\ne 0$ and $\fii(x)\ne p_*$ for all $x\in\R\setminus\{0\}$,
(ii) $\fii'(0)=0$, or (iii) $\fii'(0)\ne 0$ and $\fii(x)=p_*$ for some $x\in\R\setminus\{0\}$.

In case (i), $p_*$ is a \textit{regular} boundary point
meaning that there is a neighbourhood $D$ of $p_*$ such that $D\cap \d S$ is a single real-analytic arc. In cases
(ii) and (iii) we speak of a \textit{singular} boundary point. Case (ii) means
that $p_*$ is a conformal \textit{cusp} pointing into $S^c$ (viz. out of $S$),
and (iii) says that
$p_*$ is a \textit{double point}.

Following \cite{AKMW} we shall now define an integer $\nu\ge 3$ which we call the \textit{type} of a singular boundary point $p_*\in \d S$.
Fix a parameter $T>0$ and let $p_n$ be a point in $S$ of distance $T/\sqrt{n\Lap Q(p_*)}$ from the boundary, which is
closest to $p_*$. If $p_*$ is a cusp, then $p_n$ is unique, while there are two choices if $p_*$ is a double point.

\begin{figure}[ht]
\begin{center}
\includegraphics[width=.40\textwidth]{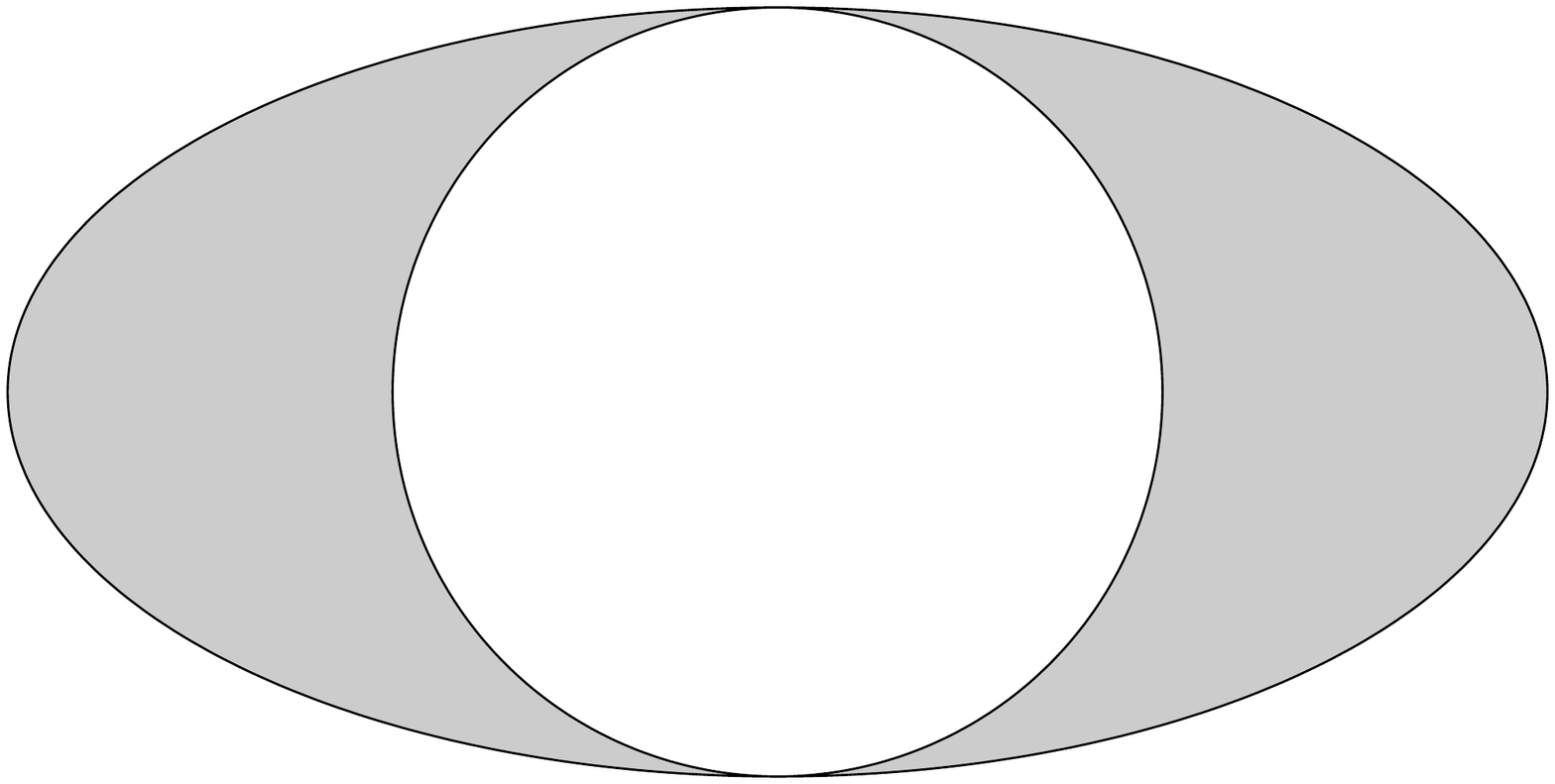}
\hspace{.05\textwidth}
\includegraphics[width=.40\textwidth]{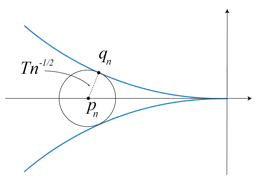}
\end{center}
\caption{A droplet with two double points.
On the right we see a moving point $p_n$ approaching a singular boundary point.}
\label{fig1}
\end{figure}

We claim that there is a unique integer $\nu\ge 3$ such that $\dist(p_n,p_*)\sim n^{-1/\nu}$. Here
"$a_n\sim b_n$'' means that there is a constant $c>0$ such that
$ca_n\le b_n\le c^{-1}a_n$.

To prove existence of such a $\nu$, let us first assume that $\d U$
has a cusp at $p_*=0$. We can assume that a conformal
map $\fii:\C_+\to U$ satisfies
$$\fii'(z)=z+a_2z^2+\cdots
+(a_{\nu-1}+ib)z^{\nu-1}+\cdots$$
where $a_j$ and $b$ are real and $b\ne 0$. Then
$$\fii(z)=\frac 1 2 z^2+\frac {a_2} 3 z^3+\cdots
+\frac {a_{\nu-1}+ib}
{\nu}z^{\nu}+\cdots.$$
Writing $\fii=u+iv$, we see that
$$u(x)=\frac 1 2x^2+\cdots,\quad v(x)=\frac b {\nu}x^{\nu}+\cdots,\quad (x\in\R).$$
This easily gives that $\dist(p_n,0)\sim n^{-1/\nu}$, i.e. the cusp $p_*=0$ has type $\nu$.

 Finally, if $p_*$ is a double point, then there are arcs of $\d S$ which meet at $p_*$ and are tangent to each other there. There are then two closest
points $p_n',p_n''$ to $p_*$ which are at distance $T/\sqrt{n\Lap Q(p_*)}$ from the boundary, and
their distance from $p_*$ is
$\sim n^{-1/(4k)}$ where $k$ is the order of tangency of the two arcs. Thus the type of $p_*$ is $\nu=4k$.

\subsection{Coulomb gas ensembles} Fix a parameter $\beta>0$, the "inverse temperature''.
By the \textit{Boltzmann-Gibbs distribution} on $\C^n$ we mean the measure
\begin{equation}\label{bgd}d\,\Prob_n(\zeta_1,\ldots,\zeta_n)=\frac 1 {Z_n}e^{-\beta H_n(\zeta_1,\ldots,\zeta_n)}\, dA^{\otimes n}(\zeta_1,\ldots,\zeta_n).\end{equation}
Here $H_n$ is the energy \eqref{energy} and $Z_n$ is a normalizing constant, chosen so that $\Prob_n$ is a probability measure. A configuration (or "system'', "point-process'') $\{\zeta_j\}_1^n$ picked randomly with respect to $\Prob_n$ is known as a \textit{Coulomb gas ensemble} in external potential $Q$. By figure of speech, the system
may be conceived as a random perturbation of a Fekete set, with better compliance the lower is the temperature, i.e., the larger that $\beta$ is.

When $\beta=1$ the system is determinantal, i.e.,  we can write
\begin{equation}\label{det}d\,\Prob_n(\zeta_1,\ldots,\zeta_n)=\det\left( \bfK_n(\zeta_j,\zeta_k)\right)_{j,k=1}^n\,
dA^{\otimes n}(\zeta_1,\ldots,\zeta_n),\end{equation}
where $\bfK_n$ is called a \textit{correlation kernel}. (See \cite{ST},
IV.7.2., cf. \cite{M,PS}.)
 The kernel $\bfK_n$
is a Hermitian function, which is determined up to multiplication by cocycles.

Throughout, a continuous function $f:\C^2\to\C$ is called \textit{Hermitian} if $f(z,w)=f(w,z)^*$. A Hermitian function $c$ is termed a \textit{cocycle} if $c(z,w)=g(z)g(w)^*$ for some unimodular function $g$.

\subsection{Rescaling} \label{rescal} Let $\{\zeta_j\}_1^n$ be a configuration of points in $\C$. Further, let
$p=p_n$ be a (possibly moving) point in $S$
and $\theta=\theta_n$ a sequence of real numbers. \footnote{The strings "moving point $p=p_n$'' and
"sequence $p=(p_n)$'' are interchangeable.}
We shall always choose the phase $\theta=\theta_n$ so that $e^{i\theta}$ is one of the normal directions at a boundary point $q_n\in\d S$ which is closest to $p_n$. By convention, we will take $e^{i\theta}$ to be the \textit{outer} normal direction at $q_n$ except in special cases when $p_n$ is close to a singular boundary point.

We rescale about $p=p_n$ as follows
\begin{equation}\label{rmap}z_j=e^{-i\theta}\sqrt{n\Lap Q(p)}(\zeta_j-p),\qquad j=1,\ldots,n\end{equation}
and consider the rescaled system $\{z_j\}_1^n$. If the system $\{\zeta_j\}_1^n$
is picked randomly with respect to $\Prob_n$,
then we consider $\{z_j\}_1^n$ as a point-process in $\C$ whose law is the image of $\Prob_n$ under the scaling \eqref{rmap}. In particular, if $\beta=1$, then $\{z_j\}_1^n$ is a determinantal process with correlation kernel
$$K_n(z,w)=\frac 1 {n\Lap Q(p)}\bfK_n(\zeta,\eta),\quad \text{where}\quad
\begin{cases}z&=e^{-i\theta}\sqrt{n\Lap Q(p)}(\zeta-p)\cr w&=e^{-i\theta}\sqrt{n\Lap Q(p)}(\eta-p)\cr\end{cases}.$$

\subsection{Regimes and translation invariance}
By the \textit{Ginibre kernel}, we mean the Hermitian function
$$G(z,w)=e^{z\bar{w}-|z|^2/2-|w|^2/2}.$$
The following result follows by a normal families argument, see \cite{AKM}.

\begin{lem} \label{cpthm}
There is a sequence $c_n$ of cocycles such that every subsequence of
the sequence $c_nK_n$ has a locally convergent subsequence. Each limit point $K$ takes the form $K=G\Psi$ where
$\Psi(z,w)$ is some Hermitian-entire function.
\end{lem}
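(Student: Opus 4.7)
The plan has three steps: (1) establish a uniform bound $|K_n(z,w)|\le C$ on compacta; (2) use the cocycle freedom to gauge away the $\sqrt n$-order phase oscillations of $K_n$; (3) apply Montel's theorem to the resulting family.

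For (1), write $\bfK_n(\zeta,\eta)=\sum_{j<n}\phi_j(\zeta)\overline{\phi_j(\eta)}$ with $\phi_j$ orthonormal; Cauchy--Schwarz gives $|\bfK_n(\zeta,\eta)|^2\le\bfK_n(\zeta,\zeta)\bfK_n(\eta,\eta)$, and the standard on-diagonal estimate $\bfK_n(\zeta,\zeta)=O(n\Lap Q(\zeta))$ in a neighbourhood of $S$ yields $|K_n(z,w)|\le C$ locally uniformly in $z,w$.

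For (2), factor $\bfK_n(\zeta,\eta)=\calP_n(\zeta,\bar\eta)\,e^{-n(Q(\zeta)+Q(\eta))/2}$ with $\calP_n$ a polynomial in $(\zeta,\bar\eta)$ of degree $<n$ in each variable, and Taylor expand
\[
nQ(\zeta)=\re\bigl[nQ_p^{\mathrm{hol}}(\zeta)\bigr]+n\Lap Q(p)|\zeta-p|^2+R_n(\zeta),
\]
where $Q_p^{\mathrm{hol}}(\zeta):=Q(p)+2\d Q(p)(\zeta-p)+\d^2 Q(p)(\zeta-p)^2$ and $R_n(\zeta(z))=O(|z|^3/\sqrt n)$. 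The middle term rescales to the Gaussian exponent $|z|^2$, but $\re[nQ_p^{\mathrm{hol}}]$ oscillates on the $\sqrt n$-scale because neither $\d Q(p)$ nor $\d^2 Q(p)$ need vanish. Kill it by choosing the unimodular cocycle $c_n(\zeta,\eta)=g_n(\zeta)\overline{g_n(\eta)}$ with
\[
g_n(\zeta):=\exp\bigl[\tfrac{n}{4}\bigl(\overline{Q_p^{\mathrm{hol}}(\zeta)}-Q_p^{\mathrm{hol}}(\zeta)\bigr)\bigr];
\]
a direct computation gives $g_n(\zeta)e^{-nQ(\zeta)/2}=e^{-nQ_p^{\mathrm{hol}}(\zeta)/2}\,e^{-n\Lap Q(p)|\zeta-p|^2/2}\,e^{-R_n(\zeta)}$, hence, in rescaled variables,
\[
c_nK_n(z,w)=L_n(z,\bar w)\,e^{-(|z|^2+|w|^2)/2}\,(1+o(1)),
\]
where $L_n(z,\bar w):=(n\Lap Q(p))^{-1}\calP_n(\zeta,\bar\eta)\,e^{-\frac{n}{2}Q_p^{\mathrm{hol}}(\zeta)-\frac{n}{2}\overline{Q_p^{\mathrm{hol}}(\eta)}}$ is Hermitian and entire in $z,\bar w$, and $o(1)\to0$ locally uniformly.

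For (3), set $\widetilde\Psi_n(z,w):=L_n(z,\bar w)e^{-z\bar w}$, which is Hermitian-entire. Step (1) translates to $|\widetilde\Psi_n(z,w)|\le Ce^{|z-w|^2/2}(1+o(1))$ locally uniformly, so Montel furnishes a locally convergent subsequence $\widetilde\Psi_{n_k}\to\Psi$ with $\Psi$ Hermitian-entire; combining with $R_n\to 0$ gives $c_{n_k}K_{n_k}=G\cdot\widetilde\Psi_{n_k}\cdot(1+o(1))\to G\Psi$ locally uniformly. The heart of the argument is identifying the cocycle in step (2); without cancelling the holomorphic phase $\exp[-\tfrac{n}{2}\re Q_p^{\mathrm{hol}}]$, the kernel $K_n$ itself oscillates on the $\sqrt n$-scale and has no subsequential limit.
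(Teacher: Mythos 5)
Your proposal is correct and is essentially the normal-families argument of \cite{AKM} that the paper cites for this lemma: bound $|K_n|$ via Cauchy--Schwarz and the on-diagonal estimate, gauge away the linear and quadratic holomorphic terms of the Taylor expansion of $nQ$ at $p_n$ with an explicit unimodular cocycle so that $c_nK_n/G$ becomes a locally bounded Hermitian-entire family, and apply Montel. Only cosmetic slips remain (the remainder factor should read $e^{-R_n(\zeta)/2}$, and the oscillation that the cocycle removes comes from $\im Q_p^{\mathrm{hol}}$ rather than $\re Q_p^{\mathrm{hol}}$).
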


Following \cite{AKM}, we will refer to a limit point $K$ in Lemma \ref{cpthm} as a "limiting kernel''.

\begin{defn}
We will consider moving points $p=p_n$ in $S$ of three types:
\begin{enumerate}[label=(\roman*)]
  \item \label{a} If $\liminf\sqrt{n}\dist(p_n,\d S)=\infty$, we say that $p$ is in the
  \textit{bulk} and write $p\in \Bulk S$.
\item \label{b} Suppose that $\limsup \sqrt{n}\dist(p_n,\d S)<\infty$,
and, in the presence of singular boundary points $p_*$, $\liminf n^{1/\nu}\dist(p_n,p_*)=\infty$ where $\nu\ge 3$ is the type of the singular point $p_*$. In this case we say that $p$ belongs of the \textit{regular boundary regime} and write $p\in \d^* S$.
\item \label{c} Suppose that $\limsup n^{1/\nu}\dist(p_n,p_*)<\infty$ where $p_*$ is a singular boundary point of type $\nu$. We then say that $p$ belongs to the \textit{singular boundary regime} and write $p\in\d'S$.
\end{enumerate}
\end{defn}

The types \ref{a} and \ref{b} were introduced in \cite{AOC}.
For type \ref{a}, one has the following result, which is a generalized version of
 the "Ginibre($\infty$)-limit''. See \cite{AOC} or \cite{AKM}, Section 7.6 for proofs.

\begin{lem} \label{gininfty}
If $p$ belongs to the bulk, then
$K=G$ for each limiting kernel.
\end{lem}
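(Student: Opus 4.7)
\emph{Proof plan.} Fix a limiting kernel $K = G\Psi$, arising as $c_{n_k}K_{n_k}\to K$ locally uniformly along a subsequence, where $\Psi$ is Hermitian-entire by Lemma \ref{cpthm}. The plan is to first identify the diagonal, showing $\Psi(z,z) = K(z,z) = 1$ for every $z\in\C$, and then to upgrade this diagonal identity to $\Psi \equiv 1$, which is equivalent to $K = G$.

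Observe that every cocycle $c$ satisfies $|c(z,z)| = 1$, so the rescaled one-point density $R_n(z) := K_n(z,z) = \bfK_n(\zeta,\zeta)/(n\Lap Q(p))$ is cocycle-invariant and converges locally to $R(z) := \Psi(z,z)$ along our subsequence. To identify $R$, I would combine two ingredients. The first is the pointwise upper bound $\bfK_n(\zeta,\zeta) \le n\Lap Q(\zeta) + o(n)$, uniformly on compact subsets of $\Int S$, obtained from weighted polynomial / $\dbar$-estimates of the type exploited in \cite{AKM}; after rescaling this yields $R_n(z) \le 1 + o(1)$ uniformly on compacts, so $R \le 1$ pointwise. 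The second is mass conservation at mesoscopic scale: the expected number of particles in the disk $B_{\rho/\sqrt{n\Lap Q(p)}}(p)$ equals $\int\bfK_n\,dA$ over that disk, and since $p\in\Bulk S$ the disk lies deep inside $S$ for large $n$, so this integral is $n\int\Lap Q\,dA \cdot (1+o(1)) = \rho^2(1+o(1))$. Written in the rescaled variable,
$$\int_{|z|\le\rho} R_n(z)\, dA(z) \longrightarrow \rho^2 = \int_{|z|\le\rho} dA(z),$$
for every $\rho > 0$. Combined with the pointwise bound $R \le 1$, this forces $R \equiv 1$.

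It remains to deduce $\Psi \equiv 1$ from $\Psi(z,z) \equiv 1$. Expanding
$$\Psi(z,w) = \sum_{j,k\ge 0} a_{jk}\, z^j \bar{w}^k$$
(the Hermitian-entire property guarantees convergence on all of $\C^2$), the diagonal condition $\sum_{j,k} a_{jk} z^j \bar z^k \equiv 1$ on $\C$, together with the linear independence of the real-analytic monomials $\{z^j \bar z^k\}$ on $\C$, forces $a_{00} = 1$ and $a_{jk} = 0$ for all $(j,k) \ne (0,0)$. Hence $\Psi \equiv 1$ and $K = G$.

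The main obstacle is securing the sharp pointwise upper bound $R_n \le 1+o(1)$ in the bulk; this is the deepest input and ultimately rests on the real-analyticity assumption on $Q$ near $S$ and on Hörmander-type $\dbar$-estimates for weighted Bergman kernels. Once that is in hand, the mesoscopic mass-conservation step and the concluding diagonal uniqueness argument for Hermitian-entire kernels are comparatively routine.
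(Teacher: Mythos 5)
Your overall architecture (identify the diagonal $R\equiv 1$, then polarize) is sound, and your concluding step is exactly right: a Hermitian-entire $\Psi$ is determined by its diagonal, so $\Psi(z,z)\equiv 1$ forces $\Psi\equiv 1$ and $K=G$; the paper itself records this polarization fact. The pointwise upper bound $R_n\le 1+o(1)$ in the bulk is also a genuine, available input. The problem is the other half.

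The step you call ``comparatively routine'' --- mass conservation at the microscopic scale, i.e. $\int_{|z|\le\rho}R_n\,dA\to\rho^2$ over a disk of Euclidean radius $\rho/\sqrt{n\Lap Q(p)}$ --- is asserted without justification and does not follow from anything you invoke. The weak convergence $\tfrac1n\bfR_n\,dA\to\sigma$ operates at macroscopic scale only. Even combining the global normalization $\int_\C\bfR_n=n$ with your pointwise upper bound and exterior decay only yields $\int_S(n\Lap Q-\bfR_n)\,dA=o(n)$, an $L^1$-smallness at macroscopic scale; a disk of area $O(1/n)$ centered at a \emph{moving} point $p_n$ could in principle absorb that entire $o(n)$ discrepancy, so no lower bound for $R$ near $p_n$ results. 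In other words, the local lower bound is precisely the hard half of the lemma, of the same depth as the upper bound, not a consequence of it. The proofs the paper relies on (\cite{AOC}; \cite{AKM}, Section 7.6, and Theorem 5.4 of \cite{AKM}, which is the estimate $\babs{\bfR_n(p_n)-n\Lap Q(p_n)}\le C(1+ne^{-c\,a_n(p)^2})$ quoted in the proof of Lemma \ref{leftlem2}) obtain a \emph{two-sided} pointwise asymptotic, whose lower-bound half comes from constructing near-extremal weighted polynomials peaked at $p_n$ via H\"{o}rmander-type $\dbar$-estimates. To repair your argument, replace the mesoscopic mass-conservation claim either by that peak-section construction or by a direct appeal to the two-sided estimate \eqref{akmest}; with $a_n(p)\to\infty$ in the bulk it gives $\babs{R_n(z)-1}\le C/n$ locally uniformly, after which your polarization step finishes the proof.
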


In general, a limiting kernel $K=G\Psi$ is called \textit{translation invariant} (in short: \ti)
if $\Psi(z,w)=\Phi(z+\bar{w})$ for some entire function $\Phi$. It is natural to conjecture that
any limiting kernel is \ti \footnote{Since our standing assumptions preclude points in $S$ where $\Lap Q=0$; see comments below.}
 It was observed in \cite{AKM} that this is the case when the potential is radially symmetric, and the computations in the paper
\cite{LR} show that this is true also for the "ellipse ensemble'', i.e. for the potentials $Q_t(\zeta):=|\,\zeta\,|^{\,2}-t\re(\zeta^2)$, $0<t<1$.
For other potentials the conjecture is still open. See the papers \cite{AKM,AKMW} for several related comments.

\begin{defn}
We say that the potential $Q$
is \textit{universally translation invariant} (in short: \lti) if all limiting kernels
are \ti
\end{defn}

Thus all radially symmetric potentials, as well as the ellipse potentials, are \lti

\smallskip

Following \cite{AKM}, we say that a limiting
kernel $K$ satisfies the
\textit{mass-one equation} if
\begin{equation}\label{moeq}\int_\C\babs{\,K(z,w)\,}^{\,2}\, dA(w)=R(z),\qquad
(z\in\C).\end{equation}
Here $R(z)=K(z,z)$ is the \textit{limiting one-point function} pertaining to the limiting kernel $K$. A limiting one-point function which does not vanish identically is called \textit{nontrivial}.

The following \textit{zero-one law}, proved in \cite{AKM}, is fundamental for what follows.

\begin{lem} \label{muck} A nontrivial limiting one-point function is everywhere strictly positive. Moreover, each \ti~ limiting kernel satisfies the mass-one equation.
\end{lem}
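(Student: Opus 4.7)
My plan is to establish positive-definiteness of each limiting kernel $K$, then use Cauchy--Schwarz together with the Hermitian-entire structure of $\Psi$ for part one, and transfer the mass-one identity from the pre-limit to the limit via Gaussian control for part two.

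First I would verify that $K$ is a positive-definite kernel. For each $n$ the matrix $[\bfK_n(z_i,z_j)]_{i,j}$ is positive semidefinite, being a correlation matrix of a determinantal process. Multiplication by the cocycle $c_n(z,w)=g_n(z)\overline{g_n(w)}$ amounts to diagonal conjugation $D_n[\bfK_n]D_n^*$ with $D_n=\diag(g_n(z_i))$, which preserves positive semidefiniteness. Passing to the locally convergent subsequence $c_{n_k}K_{n_k}\to K$, the matrix $[K(z_i,z_j)]$ is PSD. In particular the Cauchy--Schwarz bound
\[
|K(z,w)|^2 \le K(z,z)\,K(w,w)=R(z)R(w)
\]
holds for all $z,w$. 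Now suppose $R\not\equiv 0$ but $R(z_0)=0$ at some $z_0$. Then $K(z_0,\cdot)\equiv 0$, and since $G(z_0,\cdot)$ never vanishes, $\Psi(z_0,\cdot)\equiv 0$; by Hermitian symmetry also $\Psi(\cdot,z_0)\equiv 0$. Viewing $\Psi(z,w)=F(z,\bar w)$ with $F$ entire on $\C^2$, the Weierstrass factorization across the two vanishing axes yields
\[
\Psi(z,w)=(z-z_0)(\overline{w-z_0})\,H(z,w)
\]
for some Hermitian-entire $H$. Hence $R(z)=|z-z_0|^2H(z,z)$ with $H(z,z)\ge 0$, and the reduced kernel $\tilde K=GH$ is again positive-definite by the same conjugation trick applied with $c'_i=c_i/(z_i-z_0)$.

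To rule out the remaining possibility of an isolated zero at $z_0$ (i.e. $H(z_0,z_0)>0$), I would combine Cauchy--Schwarz with a pointwise bound on $R$. Each pre-limit density $R_n$ is dominated by a universal constant on compact sets (the rescaled Berezin-type bound on the bulk density), so $R\le M$ everywhere. Then Cauchy--Schwarz yields the Gaussian growth estimate
\[
|\Psi(z,w)|\le e^{|z-w|^2/2}\sqrt{R(z)R(w)}\le M\,e^{|z-w|^2/2},
\]
and the same applies to $H$. With $R\le M$ and $R(z)=|z-z_0|^2H(z,z)$, we get $H(z,z)\le M/|z-z_0|^2\to 0$ as $|z|\to\infty$. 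A Phragmén--Lindelöf / order-argument on the entire function $F_H$ of Gaussian order two whose diagonal tends to zero forces $H\equiv 0$, contradicting nontriviality of $R$. This is the step I expect to be the main obstacle: cleanly exploiting the interplay between the Gaussian growth of the polarization $\Psi$ and the decay of the diagonal $R$ after factorization.

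For the mass-one equation when $K$ is \ti, I would start from the exact pre-limit identity: each $\bfK_n$ is (up to cocycle) the reproducing kernel of a finite-dimensional polynomial Bergman space, so $\bfK_n$ is the kernel of an orthogonal projection and thus
\[
\int_\C |K_n(z,w)|^2\,dA(w)=R_n(z)
\]
in the rescaled coordinates. With $\Psi(z,w)=\Phi(z+\bar w)$, substituting $u=z+\bar w$ gives
\[
\int_\C|K(z,w)|^2\,dA(w)=\int_\C e^{-|2\re z-\bar u|^2}|\Phi(u)|^2\,dA(u),
\]
which is absolutely convergent thanks to Gaussian decay in $\re u$ and the bound $|\Phi|\le \sqrt{M}$ on the real axis (which, combined with the Gaussian growth estimate for $\Psi$, controls $\Phi$ globally). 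The Cauchy--Schwarz bound $|K_n(z,w)|^2\le R_n(z)R_n(w)$ together with standard off-diagonal Gaussian decay of $K_n$ (for projection kernels of weighted polynomial spaces) provides an integrable dominating function uniform in $n$ on a neighbourhood of the subsequence. Dominated convergence then yields
\[
\int_\C|K(z,w)|^2\,dA(w)=\lim_{k\to\infty}\int_\C|K_{n_k}(z,w)|^2\,dA(w)=\lim_{k\to\infty}R_{n_k}(z)=R(z),
\]
establishing \eqref{moeq}.
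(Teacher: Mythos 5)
The paper does not actually prove this lemma; it is quoted from \cite{AKM}, so I am judging your argument on its own terms. The first part has a genuine gap at exactly the step you flagged. The factorization $\Psi(z,w)=(z-z_0)\overline{(w-z_0)}H(z,w)$ and the positivity of $\tilde K=GH$ are fine, but the concluding claim --- that a Hermitian-entire $H$ of Gaussian order two with $GH$ positive definite and $H(z,z)\to 0$ must vanish --- is false. Take $H(z,w)=e^{-z\bar w}$: then $GH(z,w)=e^{-(|z|^2+|w|^2)/2}$ is a rank-one positive kernel, $H(z,z)=e^{-|z|^2}\to 0$, and $H\not\equiv 0$. Worse, the corresponding unreduced kernel $K(z,w)=z\bar w\,e^{-(|z|^2+|w|^2)/2}=G(z,w)\,z\bar w e^{-z\bar w}$ is positive definite, bounded, of the form $G\Psi$ with $\Psi$ Hermitian-entire, and even satisfies the mass-one equation, yet $R(z)=|z|^2e^{-|z|^2}$ vanishes at the origin without vanishing identically. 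So the zero-one law is simply not a consequence of the ingredients you use (positive definiteness, boundedness, Hermitian-entire structure, mass-one); some additional structural input about \emph{limiting} kernels is indispensable. This is why the statement is nontrivial and is imported from \cite{AKM}, where the proof exploits the rescaled Ward identities and the asymptotics of $\bfR_n$ rather than soft kernel-positivity arguments.

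The second part also has a gap, though of a different kind. Passing to the limit in $\int|K_n(z,w)|^2\,dA(w)=R_n(z)$ by dominated convergence requires an integrable majorant uniform in $n$, i.e.\ off-diagonal Gaussian damping of the rescaled kernels $K_n$. That estimate is not free for a general real-analytic potential --- it is one of the hard inputs of \cite{AOC}, and the present paper explicitly arranges its arguments so as to avoid it. Without such a majorant, Fatou's lemma yields only the mass-one \emph{inequality} $\int|K(z,w)|^2\,dA(w)\le R(z)$, and translation invariance alone does not upgrade it to equality: in the substituted form the equation reads $\int_\C e^{-|\zeta|^2}|\Phi(2\re z+\bar\zeta)|^2\,dA(\zeta)=\Phi(2\re z)$, a genuine functional equation for $\Phi$. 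The route in \cite{AKM} (reflected in Lemma \ref{back} here) is instead to show that every \ti~limiting kernel solves Ward's equation, to classify the \ti~solutions as $\Phi=\gamma*\1_I$ with $I$ an interval, and to observe that every such $\Phi$ satisfies the mass-one identity; your dominated-convergence shortcut bypasses this but at the cost of assuming an unproved uniform off-diagonal estimate.
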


\subsection{Beurling-Landau densities}
Consider a family $\config=\{\config_n\}_{n=1}^\infty$ where $\config_n=\{\zeta_{nj}\}_1^n$ is an $n$-point configuration in $S$.

Write $D(a,r)$ for the Euclidean disk of center
$a$ and radius $r$.
Given a moving point $p=p_n\in S$ and a positive parameter $\La$, we denote
\begin{align}\label{anz1}A_n(p,\La)&=D\left(p_n,\frac \La{\sqrt{n\Lap Q(p_n)}}\right),\quad &N_{n}(p,\La;\config)&=\#\left\{\config_n\cap A_n(p,\La)\right\}.
\end{align}
We define the \textit{upper Beurling-Landau density} of $\config$ at a moving point $p=p_n$ by
$$D^+(\config,p)=\limsup_{\La\to\infty}\limsup_{n\to\infty}
\frac {N_n(p,\La;\config)}{\La^{\,2}}.$$
The corresponding lower density
$D^-(\config,p)$ is defined by replacing the two "$\limsup$'' by "$\liminf$''. If $D^+(\config,p)=D^-(\config,p)$ we write
$D(\config,p)$ for the common value, and speak of the Beurling-Landau density of $\config$ at $p$.

It will be convenient to also use another measure of spreading of a family $\config$.
For a point $\zeta_{nj}\in\config_n$ we will denote the distance to its closest neighbor by
$$d_n(\zeta_{nj}):=\min_{k\ne j}\babs{\,\zeta_{nk}-\zeta_{nj}\,}.$$
We shall consider the quantity
$$\Delta(\config):=\liminf_{n\to\infty}\min\left\{
\sqrt{n\Lap Q(\zeta_{nj})}\,d_n(\zeta_{nj});\, j=1,\ldots,n\right\}.$$
We refer to $\Delta(\config)$ as the \textit{asymptotic separation constant} of the family
$\config$.

Now consider a family $\calF=\{\calF_n\}$ of Fekete sets. It is well known that
$\calF\subset S$ (see \cite{ST}, Theorem III.1.2).

The union of the following results comprise the "density theorem'' of this note.

\begin{mth} \label{THAA} $\Delta(\calF)\ge 1/\sqrt{e}$.
\end{mth}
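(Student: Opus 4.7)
The plan is to exploit Fekete minimality through a sharp sub-mean-value inequality at the natural scale $r \sim 1/\sqrt{n\Lap Q}$. Fix a Fekete point $\zeta \in \calF_n$, let $\zeta'$ be its nearest neighbor in $\calF_n$, and set $d := |\zeta - \zeta'| = d_n(\zeta)$. Since moving $\zeta$ to any $w \in \C$ cannot decrease $H_n$, the function
$$\psi(z) := 2\sum_{\xi \in \calF_n \setminus \{\zeta\}}\log|z - \xi| - nQ(z)$$
attains its supremum on $\C$ at $z = \zeta$. In particular, for every $r > 0$ the circle average $M(r) := \frac{1}{2\pi}\int_0^{2\pi}\psi(\zeta + re^{i\theta})\,d\theta$ satisfies $M(r) \le \psi(\zeta)$.

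Next I would compute $\psi(\zeta) - M(r)$ explicitly. By harmonicity of $\log|\cdot - \xi|$ off $\xi$, each $\xi$ with $|\xi - \zeta| \ge r$ contributes zero, whereas each $\xi$ with $|\xi - \zeta| < r$ contributes $2\log(|\xi - \zeta|/r) \le 0$. A Taylor expansion of $Q$, together with the vanishing of $\int_0^{2\pi}e^{i\theta}\,d\theta = \int_0^{2\pi}e^{2i\theta}\,d\theta = 0$, gives the circle average of $Q$ equal to $Q(\zeta) + \Lap Q(\zeta)\,r^2 + O(r^4)$. Hence
$$0 \le \psi(\zeta) - M(r) = 2\sum_{\xi:\,|\xi-\zeta|<r}\log\frac{|\xi-\zeta|}{r} + n\Lap Q(\zeta)\,r^2 + O(nr^4).$$
Since every log-term is non-positive, the sum is bounded above by the nearest-neighbor term $\log(d/r)$ alone, yielding the key inequality
$$\log(r/d) \le \tfrac{1}{2}\,n\Lap Q(\zeta)\,r^2 + O(nr^4),$$
equivalently $d \ge r\,\exp\!\bigl[-\tfrac{1}{2}n\Lap Q(\zeta)\,r^2 - O(nr^4)\bigr]$ whenever $d < r$ (and trivially otherwise).

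Finally, I optimize over $r$. The map $r \mapsto r\,e^{-n\Lap Q(\zeta)\,r^2/2}$ is maximized at $r_\ast := 1/\sqrt{n\Lap Q(\zeta)}$, with maximum value $1/\sqrt{e\, n\Lap Q(\zeta)}$. With this choice $nr_\ast^4 = 1/(n\Lap Q(\zeta)^2)$ is $o(1)$ \emph{uniformly in $\zeta \in S$}, since $\Lap Q$ is bounded below on the compact droplet $S$ by strict subharmonicity; uniformity of this error is the only mildly delicate point of the plan. Therefore $\sqrt{n\Lap Q(\zeta)}\,d_n(\zeta) \ge 1/\sqrt{e} - o(1)$ uniformly over $\zeta \in \calF_n$, and $\Delta(\calF) \ge 1/\sqrt{e}$ follows on taking $\liminf_n \min_j$.
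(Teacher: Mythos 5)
Your argument is correct, and it is genuinely different from the one in the paper. You work directly with the one-point variational condition: the function $\psi(z)=2\sum_{\xi\ne\zeta}\log|z-\xi|-nQ(z)$ is maximized at the Fekete point $\zeta$, and comparing $\psi(\zeta)$ with its circle average at radius $r$ isolates the sub-mean-value defect $n\Lap Q(\zeta)r^2+O(nr^4)$ of $-nQ$ against the (nonpositive) contributions $2\log(|\xi-\zeta|/r)$ of the near neighbors; optimizing at $r=1/\sqrt{n\Lap Q(\zeta)}$ gives the constant $e^{-1/2}$. The paper instead forms the weighted Lagrange polynomials $\ell_{nj}$, which satisfy $|\ell_{nj}|\le 1$ on $\C$ by the same Fekete extremality, and then integrates the sharp weighted Bernstein inequality $|\nabla|\ell_{nj}||\le\sqrt{en\Lap Q}\,(1+o(1))$ (Lemma \ref{bern}, proved by a Cauchy estimate on the circle of radius $1/\sqrt{n\Lap Q(\zeta)}$) along the segment joining $\zeta_{nj}$ to its nearest neighbor, so that $1\le\sqrt{en\Lap Q(\zeta_{nj})}\,(1+o(1))\,d$. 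The two routes are morally dual --- in both, the factor $\sqrt e$ is the Gaussian weight $e^{n\Lap Q\,r^2/2}$ evaluated at the natural microscopic radius --- but yours is more self-contained: it needs neither the Bernstein inequality nor any maximum principle for weighted polynomials, and it visibly works for $C^2$ potentials (with $o(r^2)$ in place of $O(r^4)$ in the circle average), consistent with the paper's remark in Section \ref{trra}. What the paper's route buys is reuse: the same polynomials $\ell_{nj}$ are the building blocks for Theorems \ref{ipo} and \ref{mro}. Your flagged uniformity issue is indeed harmless, since $\Lap Q$ is bounded above and below on a neighbourhood of the compact droplet and $\calF_n\subset S$.
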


\begin{mth}\label{THAAA} If $p\in \bulk S$, then $D(\calF,p)=1$.
\end{mth}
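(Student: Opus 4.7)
The plan is to adapt the sampling--interpolation strategy of \cite{AOC} to the present setting. Denote by $\Pol_n$ the space of polynomials of degree $<n$ with norm $\|P\|^2 = \int |P|^2 e^{-nQ}\,dA$ and reproducing kernel $\bfK_n$. I would reformulate the conclusion $D(\calF, p) = 1$ as the vague convergence of the rescaled counting measures $\nu_n = \sum_j \delta_{z_j^{(n)}}$ to Lebesgue measure $dA$ on $\C$, where $z_j^{(n)} = \sqrt{n\Lap Q(p_n)}(\zeta_{nj} - p_n)$ are the rescaled Fekete points. Theorem A gives uniform separation of the $z_j^{(n)}$, hence $\nu_n(D(0,R)) \le CR^2$ for every $R$; the family $(\nu_n)$ is therefore vaguely precompact, and it suffices to identify its unique vague limit.

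The core of the argument consists of two inequalities valid for every $P \in \Pol_n$:
$$C_1^{-1}\sum_j \frac{|P(\zeta_{nj})|^2 e^{-nQ(\zeta_{nj})}}{\bfK_n(\zeta_{nj},\zeta_{nj})} \;\le\; \|P\|^2 \;\le\; C_2 \sum_j \frac{|P(\zeta_{nj})|^2 e^{-nQ(\zeta_{nj})}}{\bfK_n(\zeta_{nj},\zeta_{nj})},$$
with constants independent of $n$. The left (Plancherel--Polya / interpolation) inequality would follow from the uniform separation provided by Theorem A combined with the standard off-diagonal Gaussian decay of $\bfK_n$ in the bulk. The right (sampling) inequality uses the Fekete property directly: the Lagrange interpolants $L_j(z) = \prod_{k\ne j}(z-\zeta_{nk})/\prod_{k\ne j}(\zeta_{nj}-\zeta_{nk}) \in \Pol_n$ satisfy the pointwise bound $|L_j(z)|\, e^{-nQ(z)/2} \le e^{-nQ(\zeta_{nj})/2}$ by the Fekete inequality, and substituting the Lagrange expansion $P = \sum_j P(\zeta_{nj}) L_j$ into $\|P\|^2$ and applying a suitably weighted Cauchy--Schwarz yields the stated $L^2$ bound.

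Together the two inequalities assert that the normalized reproducing kernels $\bigl\{\bfK_n(\cdot,\zeta_{nj})/\sqrt{\bfK_n(\zeta_{nj},\zeta_{nj})}\bigr\}_j$ form a Riesz basis of $\Pol_n$ with constants uniform in $n$. Passing to the rescaled limit at $p \in \bulk S$, and using that $\bfK_n(\zeta_{nj},\zeta_{nj})/(n\Lap Q(\zeta_{nj})) \to 1$ in the bulk (a consequence of the Ginibre$(\infty)$-limit, Lemma \ref{gininfty}), any vague limit $\nu$ of $\nu_n$ arises as the counting measure of a Riesz basis of normalized reproducing kernels in the Bargmann--Fock space $\calF^2 = \{f \text{ entire} : \int |f|^2 e^{-|z|^2}\,dA < \infty\}$. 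By Seip's Beurling--Landau density theorem for $\calF^2$, such Riesz bases have density exactly $1$; this forces $\nu = dA$ and completes the proof.

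The main obstacle will be the sampling inequality: the Fekete inequality only yields a pointwise bound on $|L_j|e^{-nQ/2}$, and converting this into an honest $L^2$-estimate requires a careful Cauchy--Schwarz weighting together with uniform control of $\|L_j\|^2$ (which is comparable, up to absolute constants, to $1/\bfK_n(\zeta_{nj},\zeta_{nj})$) across the bulk region near $p$. Securing these uniform bounds in turn leans on Bergman-kernel asymptotics in the bulk.
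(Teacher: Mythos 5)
Your overall strategy --- two-sided sampling/interpolation inequalities for weighted polynomials at the Fekete points, in the spirit of \cite{AOC} and \cite{MMOC} --- is indeed the route the paper takes, and your left-hand (Bessel/Plancherel--Polya) inequality is correct: it follows from uniform separation and the sub-mean-value estimate of Lemma \ref{seppo2}, with no off-diagonal kernel decay needed. But there are two genuine gaps. First, the right-hand sampling inequality for $P\in\Pol_n$ \emph{itself} cannot be obtained by inserting the Lagrange expansion and applying Cauchy--Schwarz. The Fekete property gives only the pointwise bound $\babs{\ell_{nj}}\le 1$; the functions $\ell_{nj}$ are not localized near $\zeta_{nj}$, so $\|\ell_{nj}\|_{L^1(S)}=O(1)$ rather than $O(1/n)$, your asserted bound $\|\ell_{nj}\|^2\asymp 1/\bfK_n(\zeta_{nj},\zeta_{nj})$ is unproved in the upper direction, and estimating the Gram matrix $\langle\ell_{nj},\ell_{nk}\rangle$ by Cauchy--Schwarz loses a factor of $n$. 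The paper repairs exactly this by multiplying $\ell_{nj}$ by the localizing factor $\lpar\bfK_{\eps n}(\cdot,\zeta_{nj})/\bfR_{\eps n}(\zeta_{nj})\rpar^2$ and running a Riesz--Thorin argument (Theorems \ref{ipo} and \ref{mro}); this costs $2\eps n$ degrees, which is precisely why the conclusions there are ``$\rho$-interpolating for $\rho>1$'' and ``of class $M_\rho$ for $\rho<1$'', with $\rho\to 1$ taken only at the very end. An $n$-uniform two-sided inequality for $\Pol_n$ with no degree shift --- your ``Riesz basis'' claim --- is exactly what that construction is designed to avoid, and there is no evidence it holds.

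Second, the limiting step appeals to a theorem that does not exist: by Seip--Wallst\'en, the Bargmann--Fock space admits \emph{no} Riesz basis of normalized reproducing kernels (sampling sequences must have lower density strictly greater than $1$ and interpolating sequences upper density strictly less than $1$), so one cannot conclude by citing a density theorem for such bases; the putative limit object would be a contradiction rather than a density-one set. The paper instead extracts the density bounds directly from trace estimates for the concentration operator $T_{n,\La}$ (Lemmas \ref{cancella} and \ref{flott}, which use the mass-one equation and the Ginibre limit of Lemma \ref{gininfty}), yielding $D^+(\calF,p)\le\rho_1$ and $D^-(\calF,p)\ge\rho_2$ for every $\rho_2<1<\rho_1$. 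A further conceptual slip: $D(\calF,p)=1$ is \emph{not} equivalent to vague convergence of the rescaled counting measures to $dA$. By Theorem \ref{THAA} any vague subsequential limit of these measures is the counting measure of a uniformly separated configuration, hence purely atomic and never equal to Lebesgue measure; the Beurling--Landau density only controls averages over disks of radius $\La\to\infty$.
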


\begin{mth} \label{THA}
If $p\in \d^*S$ and $Q$ has the \lti-property, then $D(\calF,p)=1/2$.
\end{mth}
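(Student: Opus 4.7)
The plan is to adapt the sampling/interpolation framework of \cite{AOC}: an $n$-Fekete configuration $\calF_n$ is asymptotically an optimal sampling-and-interpolating family for the space $\Pol_n$ of polynomials of degree $<n$ equipped with the weighted norm $\|p\|^2=\int|p|^2 e^{-nQ}\, dA$, whose reproducing kernel is (up to a cocycle) the $\beta=1$ correlation kernel $\bfK_n$. This bridge will let us import the limiting-kernel machinery of Lemmas~\ref{cpthm}--\ref{muck}, originally stated for determinantal Coulomb gases, into statements about the deterministic Fekete configurations.

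First I would rescale about $p=p_n$ via \eqref{rmap} with $e^{i\theta}$ the outer normal at the closest boundary point $q_n\in\d S$; since $p\in\d^*S$, the rescaled droplet converges locally to a half-plane $\{\re z\le t\}$ for some $t\in[0,\infty)$. Along a subsequence, Lemma~\ref{cpthm} produces a limiting kernel $K=G\Psi$ for the rescaled correlation kernels, and the \lti~hypothesis forces $\Psi(z,w)=\Phi(z+\bar w)$, so the limiting one-point function
\[
R(z)=K(z,z)=\Phi(2\re z)
\]
depends only on $\re z$. Lemma~\ref{muck} supplies the mass-one equation for $K$; comparison with the bulk Ginibre limit of Lemma~\ref{gininfty} (applied to rescalings about points moved slightly deeper into $S$) gives $\Phi(\xi)\to 1$ as $\xi\to-\infty$, while droplet confinement forces $\Phi(\xi)\to 0$ as $\xi\to+\infty$; together these pin down $0\le\Phi\le 1$.

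For the upper bound $D^+(\calF,p)\le 1/2$, the (asymptotic) interpolation property of $\calF_n$ in $\Pol_n$ controls the count from above by the trace-type integral $\int_{D(0,\La)}K_n(z,z)\, dA(z)+o(\La^{\,2})$, so in the limit
\[
\limsup_{n\to\infty}\frac{N_n(p,\La;\calF)}{\La^{\,2}}\;\le\;\frac{1}{\La^{\,2}}\int_{D(0,\La)}\Phi(2\re z)\, dA(z);
\]
a direct computation using the limits of $\Phi$ at $\pm\infty$ and the $dA$-area $\La^{\,2}/2$ of the half-disk $D(0,\La)\cap\{\re z<0\}$ shows that the right-hand side tends to $1/2$ as $\La\to\infty$. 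The matching lower bound $D^-(\calF,p)\ge 1/2$ follows dually from the sampling property via a Landau-type argument, which forces $N_n(p,\La;\calF)$ to be asymptotically at least the $L^2$-trace of the localized projection in the limiting Fock-type space, again yielding $\La^{\,2}/2$. The hardest part is transferring the global minimality of Fekete configurations into tight \emph{local} sampling/interpolation estimates at the rescaled scale $n^{-1/2}$; this requires Bernstein--Markov-type bounds for weighted polynomials and a Carath\'eodory-type convergence of the rescaled droplets. The \lti~hypothesis is used precisely to produce $\Psi(z,w)=\Phi(z+\bar w)$; without it $R$ could depend on $\im z$ and the universal $1/2$-density would break down. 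Theorem~\ref{THAA}'s separation estimate is invoked to set up the normal-families framework and to prevent pathological clustering.
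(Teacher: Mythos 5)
Your overall strategy is the one the paper follows (the sampling/interpolation framework of \cite{AOC}, concentration operators, trace estimates, limiting kernels), but the proposal leaves the two load-bearing steps unproved. First, the statement that $\calF_n$ is ``asymptotically an optimal sampling-and-interpolating family for $\Pol_n$'' is exactly what has to be established, and you flag it as the hardest part without resolving it. The paper does this by explicit construction: the weighted Lagrange polynomials $\ell_{nj}$ (which satisfy $|\ell_{nj}|\le 1$ by Fekete minimality) are multiplied by the localizing factors $\bigl(\bfK_{\eps n}(\cdot,\zeta_{nj})/\bfR_{\eps n}(\zeta_{nj})\bigr)^2$, and one needs the separation of Theorem~\ref{THAA}, the uniform lower bound $\bfR_{\eps n}(\zeta_{nj})\ge c\eps n$ (Lemma~\ref{beloww}), and a Riesz--Thorin argument. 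Crucially, this only yields interpolation in $\Pol_{n\rho}$ for $\rho>1$ and sampling in $\Pol_{n\rho}$ for $\rho<1$; no family can be both for the same degree, so the bounds $D^+\le\rho_1/2$ and $D^-\ge\rho_2/2$ are obtained separately and then $\rho_1\downarrow1$, $\rho_2\uparrow1$. Your sketch, which works at degree $n$ throughout, cannot close this way. (A further omission: when $\d S$ has singular points elsewhere, the lower bound on $\bfR_{\eps n}$ fails at Fekete points near them, and the paper must cut away the singular regime and work with $\calF_n\cap S_n$.)

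Second, you misplace where the \lti-hypothesis enters. The Landau-type counting argument sandwiches $N_n$ between $\Tr T_{n,\La}-(1-\gamma)^{-1}\Tr(T_{n,\La}-T_{n,\La}^2)$ and $\Tr T_{n,\La}+\delta^{-1}\Tr(T_{n,\La}-T_{n,\La}^2)$, so you need \emph{both} traces. The first trace already converges to $\rho\La^2/2$ at $\d^*S$ with no translation invariance whatsoever: Lemma~\ref{leftlem2} gives $|R(z)-\chi_{(-\infty,a)}(\re z)|\le Ce^{-c(\re z-a)^2}$ unconditionally (Lemma~\ref{flott} explicitly ``does not presuppose any translation invariance''). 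What genuinely requires \lti\ is the second trace: to show $\Tr(T_{n,\La}-T_{n,\La}^2)=o(\La^2)$ one needs off-diagonal control of the \emph{limiting kernel}, not just of its diagonal $R$. The paper gets this from the explicit form $K^m(z,w)=G(z,w)F(z+\bar w-2m)$ forced by Ward's equation plus translation invariance (Lemma~\ref{back}), together with the uniform mass-one concentration of the Berezin kernels $B^m$ (Lemma~\ref{uniint}). Your remark that without \lti\ ``$R$ could depend on $\im z$ and the universal $1/2$-density would break down'' is therefore not the right diagnosis: the one-point profile is fine in any case; it is the plunge region of the eigenvalues of $T_{n,\La}$ that is uncontrolled without translation invariance.
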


\begin{mth} \label{THB} If $p\in\d' S$, then $D(\calF,p)=0$.
\end{mth}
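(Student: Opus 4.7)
The plan is to combine Theorem~\ref{THAA} (the Fekete-point separation bound $\Delta(\calF)\ge 1/\sqrt e$) with a sharp area estimate on $A_n(p,\La)\cap S$ that exploits the thinness of $S$ near any singular boundary point, and then to conclude by a packing argument. Crucially, both the cusp and double-point cases reduce to a common ``thin-strip'' picture.

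\textbf{Step 1 (local normal form for $S$).} I would translate and rotate so that $p_*=0$ with the cusp axis along the positive $u$-axis, and read off the local shape of $S$ from the Taylor expansion of the conformal map $\fii:\C_+\to U$ recalled in Section~1. In the cusp case (type $\nu$, $\fii'(0)=0$) the computation already carried out there shows that $S\cap D(0,\epsilon_0)$ lies inside a region of the form
\[\{(u,v)\,:\,0\le u\le \epsilon_0,\ |v|\le c\,u^{\nu/2}\}\]
up to higher-order corrections. In the double-point case ($\nu=4k$), each of the two lobes of $S$ meeting at $p_*$ has the analogous shape with width exponent $2k=\nu/2$. In either case, the transverse width of $S$ at distance $u$ from $p_*$ is $\asymp u^{\nu/2}$.

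\textbf{Step 2 (area estimate).} Since $p\in\d'S$, we have $|p_n-p_*|\le Cn^{-1/\nu}$, placing $p_n$ where the width of $S$ is $\lesssim n^{-1/2}$. Because $\nu\ge 3$, $\La/\sqrt{n\Lap Q(p_n)}=o(n^{-1/\nu})$ for each fixed $\La$, so for $n$ large $A_n(p,\La)$ stays inside the thin-strip regime and does not reach the tip $p_*$. Integrating the local width over the $u$-extent of $A_n(p,\La)$ gives
\[\mathrm{Area}\bigl(A_n(p,\La)\cap S\bigr)\le \frac{C\La}{n},\]
linear in $\La$ rather than quadratic. (If $|p_n-p_*|$ happens to be smaller than $\La/\sqrt n$, a direct comparison with $B(p_*,O(\La/\sqrt n))\cap S$, of area $\lesssim (\La/\sqrt n)^{\nu/2+1}$, gives an even stronger bound.)

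\textbf{Step 3 (packing via Theorem~\ref{THAA}).} By Theorem~\ref{THAA} and the standing hypothesis that $\Lap Q$ is bounded below on $S$, for $n$ large the points of $\calF_n$ are mutually separated by at least $\epsilon/\sqrt n$ for some fixed $\epsilon>0$. Disjoint Euclidean disks of radius $\epsilon/(2\sqrt n)$ centred at the Fekete points lying in $A_n(p,\La)$ therefore fit inside a modest enlargement of $A_n(p,\La)\cap S$, and comparing areas yields
\[N_n(p,\La;\calF)\le C'\La\]
with $C'$ independent of $\La$ and $n$. Dividing by $\La^2$ and letting first $n\to\infty$ and then $\La\to\infty$ gives $D^+(\calF,p)=0$, whence $D(\calF,p)=0$ since $D^-\ge 0$.

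The principal obstacle is Step~1: establishing the common normal form for $S$ near $p_*$ uniformly across the conformal-cusp and double-point cases. This should follow from Sakai's regularity theorem together with the Taylor expansion of $\fii$ already recorded in Section~1; once it is in place, Steps~2--4 are essentially routine area and packing estimates.
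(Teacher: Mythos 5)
Your argument is correct and is essentially the paper's own proof: both establish that near a singular boundary point of type $\nu$ the droplet is confined to a region of transverse width $O(n^{-1/2})$ (so that $A_n(p,\La)\cap S$ has area $O(\La/n)$, linear rather than quadratic in $\La$), and then combine this with the separation bound of Theorem~\ref{THAA} to get $N_n(p,\La;\calF)=O(\La)$ and hence density zero. The only cosmetic difference is that the paper phrases the counting step in rescaled coordinates (Fekete images are $c$-separated in a truncated strip of area $O(T\La)$) rather than as a Euclidean disk-packing comparison, but the content is identical.
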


\subsubsection*{Comments} Theorem \ref{THAA} with some unspecified separation constant was proved independently in the papers \cite{AOC} and \cite{NS}; our proof here is quite similar to the one in \cite{AOC}.
The problem of finding the exact value of $\Delta(\calF)$ comes close to the (Abrikosov's) conjecture that Fekete points should be organized as nodes of a honeycomb lattice, relative to the conformal metric (perhaps with some slight disturbances near the boundary). The bound $1/\sqrt{e}=0.606...$ may be regarded as a first crude step in this direction. In view of the structure of the honeycomb lattice, it could be speculated that the exact value of $\Delta(\calF)$ might be $\sqrt{2/\sqrt{3}}=1.074..$.

 A similar (much earlier) separation result, for Fekete points on a sphere, was obtained by Dahlberg \cite{D}, and the corresponding separation constant has since then been the subject of various investigations. To our knowledge the current record is found in \cite{Dr}. Cf. also \cite{KSS,MP}.

Theorem \ref{THAAA} was in effect proved in \cite{AOC} (Section 7) but we give a simpler proof here, depending on results from \cite{AKM}.

The special case of Theorem \ref{THA} for the Ginibre potential $Q=\babs{\,\zeta\,}^{\,2}$
was shown in \cite{AOC}, where it was also conjectured that the result be true in general.

 A classical result asserts weak convergence, in the sense of measures, of the normalized counting measures at Fekete points to the equilibrium measure, i.e., the measures
$\mu_n:=n^{-1}\sum_1^n\delta_{\zeta_{nj}}$ converge as measures to $\sigma$ as $n\to\infty$. Cf. \cite{ST}, Section III.1. The convergence of the counting
measures has been generalized in various directions, see for instance \cite{AOC,BBNY,BBW,Dr,HSa,KSS,L,Le,LOC,MP,NS,PeS,POC,SS1} and the references there.
One of those directions concerns the
case of a complex line bundle over a compact manifold \cite{BBW,LOC,POC}.
Compactness is manifest for many important questions, e.g. for the distribution of Fekete points on spheres. The "droplet'' is then the entire manifold, i.e., there is only bulk. In this case, the rate of weak convergence $\mu_n\to\sigma$ has been quantified in terms of the Wasserstein metric, see \cite{LOC}.
However, the compact case is different from ours, and our present density results are of another type. Our setting is more directly related to the situation in \cite{ST} and e.g. \cite{SS1,NS,PeS}.

In the presence of \textit{bulk singularities},  i.e. isolated points $p^*\in\Int S$ where $\Lap Q(p_*)=0$, \footnote{There may also be boundary points $p_*$ at which
$\Lap Q(p_*)=0$; see \cite{BGM} for an example. It seems that this kind of "doubly singular boundary points'' have not yet been completely classified.} one can introduce a fourth "bulk singular regime'', where the distribution of Fekete points will be "sparse'',  depending on the type of the singularity. In such a regime, translation invariance is lost, and Ward's equation takes a different form, see \cite{AS}, cf. \cite{AKM}, Section 7.3. It is also possible to introduce certain types of logarithmic singularities in the potential, as in
\cite{AK}, giving different sorts of effects for the Fekete points. We will return to this in a later publication.

From a technical point of view, we
continue to develop methods from
the paper \cite{AOC}. We rely on techniques of sampling and interpolation in spaces of weighted polynomials, which in turn builds on techniques developed in the papers \cite{MMOC,MOC,POC} (cf. also the book \cite{Se}). We shall also use elements of the method of rescaled Ward identities from the paper \cite{AKM}, and the technique of moving points from \cite{AKM,AKMW,AOC}.

Finally, as in \cite{AOC}, it is worth to point out an interesting feature of our method; it uses properties of a  temperature $1/\beta=1$ Coulomb gas, to obtain information about the Beurling-Landau density at temperature $1/\beta=0$.

\subsection{Notational conventions}  Whenever an unspecified measure
is indicated, such as in "$\int f$'', or "$L^p$'', the measure is understood to be $dA$. An unspecified norm $\|\cdot\|$ always denotes
the norm in $L^2$.

Following \cite{AKM}, we shall
use boldface letters $\bfR$, $\bfK$, etc. to denote objects pertaining to the non-rescaled point-process $\{\zeta_j\}_1^n$ from the $\beta=1$ ensemble
associated with potential $Q$. Corresponding objects pertaining to the rescaled system $\{z_j\}_1^n$ given by \eqref{rmap} will be denoted by plain symbols, $R$, $K$, etc.
In particular, we denote the \textit{$k$-point function} of the system $\{\zeta_j\}_1^n$ by
$$\bfR_{n,k}(\zeta_1,\ldots,\zeta_k)=\det\left(\bfK_n(\zeta_i,\zeta_j)\right)_{i,j=1}^k$$
and the rescaled version will be written
$$R_{n,k}(z_1,\ldots,z_k)=\frac 1{(n\Lap Q(p))^k}\bfR_{n,k}(\zeta_1,\ldots,\zeta_k).$$
When $k=1$ we omit the subscript, and simply write
$$\bfR_n(\zeta)=\bfR_{n,1}(\zeta)=\bfK_n(\zeta,\zeta)\quad ,\quad
R_n(z)=R_{n,1}(z)=K_n(z,z).$$
Finally, it is convenient to denote by
$$B_n(z,w)=\frac {\babs{\,K_n(z,w)\,}^{\,2}}{R_n(z)}$$
the (rescaled) \textit{Berezin kernel} rooted at $z$. Note that
$\int_\C B_n(z,w)\, dA(w)\equiv 1$.

\section{Some preparations}

In this section, we provide various apriori estimates for weighted polynomials and limiting kernels.

\subsection{Weighted polynomials} By a \textit{weighted polynomial} of degree $n$, we mean a function of the form $f=p\cdot e^{-nQ/2}$ where $p$ is a (holomorphic) polynomial
of degree at most $n-1$. We write $\Pol_n$ for the space of weighted polynomials regarded as a subspace of $L^2$.

It is
well-known that the reproducing kernel $\bfK_n(\zeta,\eta)$ for $\Pol_n$ is a correlation kernel for the random normal matrix ensemble associated with the potential $Q$, i.e., with this choice of $\bfK_n$, we have the identity \eqref{det} for $\beta=1$. See e.g. \cite{ST}, Section IV.7.2.

\subsection{The Bernstein inequality}
The following lemma is a sharper version of a result from \cite{AOC}.

\begin{lem} \label{bern} Suppose that $f\in\Pol_n$ and $\zeta\in S$.
If $f(\zeta)\ne 0$, then
\begin{equation}\babs{\,\nabla \babs{\,f(\zeta)\,}\,}\le \sqrt{en\Lap Q(\zeta)}\,\left(1+O(1/\sqrt{n})\right)\left\|\,f\,\right\|_{L^\infty}.\end{equation}
Here the $O$-constant is uniform for $\zeta\in S$.
\end{lem}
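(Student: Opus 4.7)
The plan is to reduce the estimate to a Cauchy bound on an auxiliary entire function. The first step is a direct Wirtinger computation: for $f = p\,e^{-nQ/2}$ one obtains $\d|f|^{2} = \bar p\,(p' - np\,\d Q)\,e^{-nQ}$, and combining with the general identity $\big|\nabla|f|\big| = \big|\d|f|^{2}\big|/|f|$ (valid wherever $f \ne 0$) yields the pointwise formula
\[
\big|\nabla|f(\zeta)|\big| = |\,p'(\zeta) - np(\zeta)\,\d Q(\zeta)\,|\,e^{-nQ(\zeta)/2}.
\]
This is purely algebraic; the remaining task is to bound the right-hand side.

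For fixed $\zeta_0 \in S$, the key trick is to recognize this quantity as a holomorphic derivative in disguise. I would introduce the degree-two polynomial
\[
\Psi(\zeta) := \d Q(\zeta_0)(\zeta - \zeta_0) + \tfrac12\,\d^{\,2}Q(\zeta_0)(\zeta - \zeta_0)^{2}
\]
and set $h(\zeta) := p(\zeta)\,e^{-n\Psi(\zeta)}$, which is entire. A one-line computation gives $h'(\zeta_0) = p'(\zeta_0) - np(\zeta_0)\,\d Q(\zeta_0)$, so $\big|\nabla|f(\zeta_0)|\big| = |h'(\zeta_0)|\,e^{-nQ(\zeta_0)/2}$. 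Including the quadratic term in $\Psi$ will prove essential: it cancels the $\d^{2}Q$ cross contribution in the real-analytic expansion of $Q$ at $\zeta_0$, without which the eventual constant would be strictly larger than $\sqrt e$.

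Next I would bound $|h|$ on a small circle $|\zeta - \zeta_0| = r$. Using $|p| = |f|\,e^{nQ/2} \le \|f\|_{L^\infty}\,e^{nQ/2}$ together with the expansion
\[
Q(\zeta)/2 - \re\Psi(\zeta) = Q(\zeta_0)/2 + \tfrac12\,\Lap Q(\zeta_0)\,|\zeta - \zeta_0|^{2} + O(|\zeta - \zeta_0|^{3}),
\]
which is uniform for $\zeta_0 \in S$ by compactness of $S \subset \Int\Sigma_0$ and real-analyticity of $Q$ there, one obtains
\[
|h(\zeta)| \le \|f\|_{L^\infty}\exp\!\Big(nQ(\zeta_0)/2 + n\Lap Q(\zeta_0)\,r^{2}/2 + O(nr^{3})\Big).
\]
Cauchy's estimate $|h'(\zeta_0)| \le r^{-1}\max_{|\zeta - \zeta_0| = r}|h|$ then delivers
\[
\big|\nabla|f(\zeta_0)|\big| \le r^{-1}\,\|f\|_{L^\infty}\exp\!\Big(n\Lap Q(\zeta_0)\,r^{2}/2 + O(nr^{3})\Big).
\]

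The final step is to optimize in $r$. The natural choice $r = 1/\sqrt{n\Lap Q(\zeta_0)}$---precisely the rescaling length of Section~\ref{rescal}---gives $r^{-1} = \sqrt{n\Lap Q(\zeta_0)}$, reduces the quadratic contribution in the exponent to $1/2$, and leaves a cubic error of $O(n^{-1/2})$, using that $\Lap Q \ge c > 0$ on $S$ by strict subharmonicity. This yields exactly
\[
\big|\nabla|f(\zeta_0)|\big| \le \sqrt{e\,n\Lap Q(\zeta_0)}\,(1 + O(n^{-1/2}))\,\|f\|_{L^\infty},
\]
uniformly in $\zeta_0 \in S$. The only genuine choice is the degree of $\Psi$---linear so that $h'(\zeta_0)$ matches the target quantity, quadratic so that the $\d^{2}Q$ cross term is absorbed; after this the rest is the classical Cauchy-plus-optimization pattern executed at the natural scale.
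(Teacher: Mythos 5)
Your proof is correct and follows essentially the same route as the paper's: both replace $Q$ by its holomorphic quadratic Taylor polynomial at $\zeta_0$ (your $\Psi$ is the paper's $H_{\zeta}/2$ minus the constant term), apply a Cauchy estimate to the resulting entire function on the circle of radius $1/\sqrt{n\Lap Q(\zeta_0)}$, and pick up the factor $\sqrt{e}$ from the $\tfrac12\Lap Q(\zeta_0)|\zeta-\zeta_0|^2$ remainder. The only differences are cosmetic (explicit Cauchy integral versus the derivative bound, and where the constant $e^{-nQ(\zeta_0)/2}$ is carried).
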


\begin{proof}
Write $f=pe^{-nQ/2}$ where $p$ is a polynomial of degree at most $n-1$. Also fix $\zeta\in S$
with $\zeta\not\in\calF_n$ (so $\babs{\,f\,}$ is differentiable near $\zeta$). We write
$$H_\zeta(\eta)=Q(\zeta)+2\d Q(\zeta)\cdot(\eta-\zeta)+\d^2 Q(\zeta)\cdot (\eta-\zeta)^2$$
and
$$h_\zeta(\eta)=\re H_\zeta(\eta).$$
By Taylor's formula,
\begin{equation}\label{nb}\begin{split}n\babs{\,Q(\eta)-h_\zeta(\eta)\,}&\le n\Lap Q(\zeta)\babs{\zeta-\eta}^2+O\left(1/\sqrt{n}\right)\\
&=1+O\left(1/\sqrt{n}\right)\quad \text{when}\quad
\babs{\,\eta-\zeta\,}=1/\sqrt{n\Lap Q(\zeta)}.\\
\end{split}\end{equation}

Next observe that
\begin{equation}\label{1o}\babs{\nabla\left(\babs{\,p\,}e^{-nQ/2}\right)(\eta)}=
\babs{\,p'(\eta)-n\cdot\d Q(\eta)\cdot p(\eta)\,}e^{-nQ(\eta)/2},
\end{equation}
and
\begin{equation}\label{2o}\begin{split}
\babs{\nabla\left(\babs{p}e^{-nh_\zeta/2}\right)(\eta)}&=
\babs{\,p'(\eta)-n\cdot\d h_\zeta(\eta)\cdot p(\eta)\,}e^{-nh_\zeta(\eta)/2}\\
&=\babs{\frac d {d\eta}\left(pe^{-nH_\zeta/2}\right)(\eta)}.\\
\end{split}\end{equation}
The expressions in \eqref{1o} and \eqref{2o} are identical when $\eta=\zeta$.

Let $\gamma$ be the circle centered at $\zeta$ with radius $1/\sqrt{n\Lap Q(\zeta)}$. By a Cauchy estimate,
\begin{equation}\label{3o}\begin{split}
\babs{\frac d {d\eta}\left(pe^{-nH_\zeta/2}\right)(\zeta)}&=
\frac 1 {2\pi }\babs{\int_\gamma\frac {p(\eta)e^{-nH_\zeta(\eta)/2}}{(\zeta-\eta)^2}\, d\eta}\\
&\le\frac {n\Lap Q(\zeta)} {2\pi}\int_\gamma\babs{p(\eta)}e^{-nh_\zeta(\eta)/2}\,\babs{d\eta}.\\
\end{split}
\end{equation}
In view of \eqref{nb}, the far right side is dominated by
\begin{align*}\frac {n\Lap Q(\zeta)} {2\pi}e^{1/2+O(1/\sqrt{n})}\int_\gamma\babs{p(\eta)}e^{-nQ(\eta)/2}\,\babs{d\eta}&\le \sqrt{n\Lap Q(\zeta)}e^{1/2+O(1/\sqrt{n})}\sup_{\eta\in\gamma}\babs{f(\eta)}\\
&\le \sqrt{n\Lap Q(\zeta)}\sqrt{e}\left[1+o(1)\right]\left\|\,f\,\right\|_{L^\infty}.\end{align*}
The proof is complete.
\end{proof}

\subsection{Auxiliary estimates} We will use the following standard facts.

\begin{lem} \label{damp}
If $f\in\Pol_n$ and $\babs{\,f\,}\le 1$ on $S$, then $\babs{\,f\,}\le 1$ on $\C$.
\end{lem}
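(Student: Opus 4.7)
My plan is to reduce the claim to the classical domination (Bernstein--Walsh--Siciak) principle in weighted potential theory. Write $f=pe^{-nQ/2}$ with $\deg p\le n-1$, and introduce the subharmonic function
\[
u(\zeta)=\tfrac{2}{n}\log \babs{\,p(\zeta)\,}.
\]
Since $\deg p\le n-1$, one has $u(\zeta)\le 2(1-1/n)\log|\zeta|+O(1)$ at infinity. The hypothesis $|f|\le 1$ on $S$ reads $u\le Q$ on $S$. My goal is to upgrade this to $u\le Q$ on all of $\C$.

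The upgrade is done via the \emph{weighted equilibrium potential} $\eqpot$, defined by $\eqpot(\zeta)=2F-2U^{\sigma}(\zeta)$ where $U^{\sigma}(\zeta)=\int \log\frac{1}{|\zeta-\eta|}\,d\sigma(\eta)$ is the logarithmic potential of the equilibrium measure from \eqref{eqm}, and $F$ is the modified Robin constant. By Frostman's variational inequalities (see \cite{ST}, Sections I.1 and I.3), under our standing assumptions ($Q$ strictly subharmonic on $S$, Sakai-regular boundary), the function $\eqpot$ is subharmonic on $\C$, harmonic on $\C\setminus S$, satisfies $\eqpot=Q$ on $S$ and $\eqpot\le Q$ everywhere, with the growth $\eqpot(\zeta)=2\log|\zeta|+O(1)$ at infinity.

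Consider $w:=u-\eqpot$. On $\C\setminus S$, $w$ is subharmonic (since $\eqpot$ is harmonic there), while on $\d S$ the hypothesis together with $\eqpot=Q$ on $S$ gives $w\le 0$. The leading $2\log|\zeta|$ terms in $u$ and $\eqpot$ cancel, so $w(\zeta)\le -\tfrac{2}{n}\log|\zeta|+O(1)\to -\infty$ (or at worst $O(1)$) as $\zeta\to\infty$. The maximum principle for subharmonic functions on the unbounded open set $\C\setminus S$ then yields $w\le 0$ on $\C\setminus S$. Combined with the hypothesis on $S$, this gives $u\le \eqpot\le Q$ on all of $\C$, which is exactly $|f|\le 1$ on $\C$.

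The statement is essentially a direct application of Theorem III.2.1 in \cite{ST}, so the ``main obstacle'' is mostly bookkeeping: one should verify (or cite) that under our analytic-boundary standing assumption the coincidence set of $\eqpot$ and $Q$ is precisely $S$ (not merely $S$ up to a polar set), so that the maximum-principle conclusion transfers cleanly to every boundary point of $\C\setminus S$ without a quasi-everywhere caveat. Since strict subharmonicity of $Q$ near $S$ forces this identity, the argument is in fact very short, and in practice it suffices to invoke \cite{ST}.
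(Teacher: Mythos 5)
Your argument is correct and is precisely the standard domination principle (Theorem III.2.1 in \cite{ST}) that the paper invokes without proof: Lemma \ref{damp} is stated among the ``standard facts'' with a pointer to \cite{AKM}, Section 3, where exactly this obstacle-function/maximum-principle argument (compare $u=\tfrac2n\log|p|$ with $\eqpot$ on $S^c$, using harmonicity of $\eqpot$ off $S$ and the cancellation of the $2\log|\zeta|$ growth) is what underlies the claim. The one caveat you flag — that $\eqpot\le Q$ a priori only quasi-everywhere — is indeed the only delicate point, and it is harmless under the standing assumptions, since $Q$ is continuous on $\Int\Sigma_0$ (so $\{\eqpot>Q\}\cap\Int\Sigma_0$ is open and polar, hence empty) and $Q=+\infty$ off $\Sigma_0$.
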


\begin{lem} \label{seppo} If $f=ue^{-nQ/2}$ where $u$ is holomorphic and bounded in $D(\zeta,c/\sqrt{n})$ and $\Lap Q\le K$ in $D(\zeta,c/\sqrt{n})$, then there is a constant $C=C(K,c)$ such that
$$\babs{\,f(\zeta)\,}^{\,2}\le Cn\int_{D(\zeta,c/\sqrt{n})}\babs{\,f\,}^{\,2}\, dA.$$
\end{lem}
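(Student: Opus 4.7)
The plan is to reduce the inequality to the classical sub-mean value property of subharmonic functions, applied to a slightly adjusted version of $|f|^2$.

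I would start by noting that $|f|^2=|u|^2 e^{-nQ}$, and that $\log|u|^2$ is subharmonic on $D(\zeta,c/\sqrt n)$ (in the distributional sense, even at zeros of $u$). Using $\Lap Q\le K$ together with $\Lap |w-\zeta|^2=1$ (since $\Lap=\d\dbar$), the auxiliary function
$$\psi(w):=\log|f(w)|^2+nK|w-\zeta|^2=\log|u(w)|^2-nQ(w)+nK|w-\zeta|^2$$
satisfies $\Lap\psi=2\Lap\log|u|+n(K-\Lap Q)\ge 0$ on the disk, and is therefore subharmonic there. Since the exponential is convex and increasing,
$$g(w):=|f(w)|^2\, e^{nK|w-\zeta|^2}=e^{\psi(w)}$$
is a nonnegative continuous subharmonic function on $D(\zeta,c/\sqrt n)$.

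Next I would invoke the sub-mean value inequality for $g$ on $D(\zeta,r)$ with $r=c/\sqrt n$. With the paper's normalization $dA=d^2 z/\pi$, this reads
$$g(\zeta)\le\frac{1}{r^2}\int_{D(\zeta,r)}g\,dA=\frac{n}{c^2}\int_{D(\zeta,c/\sqrt n)}g(w)\,dA(w).$$
Since $g(\zeta)=|f(\zeta)|^2$ and $e^{nK|w-\zeta|^2}\le e^{Kc^2}$ on the disk, this yields
$$|f(\zeta)|^2\le\frac{n\, e^{Kc^2}}{c^2}\int_{D(\zeta,c/\sqrt n)}|f|^2\,dA,$$
which is the asserted bound with $C(K,c)=e^{Kc^2}/c^2$.

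No real obstacle is expected. The only subtlety is that $\log|u|^2=-\infty$ at zeros of $u$; this is handled in the standard way by interpreting $\Lap$ as a distribution, and the function $g$ itself remains continuous (vanishing at those zeros), so the classical sub-mean value property applies without modification.
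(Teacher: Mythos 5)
Your argument is correct and complete: the subharmonicity of $|f|^2e^{nK|w-\zeta|^2}$ (via convexity of the exponential applied to $\log|u|^2-nQ+nK|w-\zeta|^2$) plus the sub-mean value inequality on the disk of radius $c/\sqrt{n}$ is exactly the standard route, and your constant $e^{Kc^2}/c^2$ is fine. The paper itself gives no proof here — it only points to \cite{AKM}, Section 3 — and the argument discussed there is essentially the one you wrote.
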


\begin{lem} \label{penttinen} Let $p=p_n$ be a moving point and $R_n$ the corresponding rescaled $1$-point function. Then there is a constant $C$ such that $R_n\le C$ on $\C$.
\end{lem}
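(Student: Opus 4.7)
The plan is to reduce the statement to the unscaled bound $\bfR_n(\zeta) \le Cn$ on $\C$, and then absorb the factor $n\Lap Q(p)$ by using that $\Lap Q$ is bounded below on (a neighborhood of) the droplet. The key input is the extremal property of the reproducing kernel together with the $L^\infty$-damping Lemma \ref{damp} and the pointwise estimate Lemma \ref{seppo}.

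First I would use that, as the reproducing kernel for $\Pol_n$,
$$\bfR_n(\zeta) = \sup\left\{\,\babs{\,f(\zeta)\,}^{\,2} \,:\, f\in \Pol_n,\ \|f\|_{L^2}\le 1 \,\right\}.$$
Fix such an $f = p e^{-nQ/2}$ with $\|f\|=1$. By Lemma \ref{damp} we have $\|f\|_{L^\infty(\C)} = \|f\|_{L^\infty(S)}$, so it suffices to bound $|f(\eta)|$ for $\eta\in S$. Since $S$ is compact and contained in $\Int \Sigma_0$, there is a fixed $\delta>0$ and a constant $K$ with $\Lap Q \le K$ on $\{\dist(\cdot,S)<\delta\}$. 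For $n$ large the disks $D(\eta, c/\sqrt{n})$ with $\eta\in S$ lie in this neighborhood, so Lemma \ref{seppo} (applied to the holomorphic factor $u=p$) yields
$$\babs{\,f(\eta)\,}^{\,2} \le C n \int_{D(\eta, c/\sqrt{n})} \babs{\,f\,}^{\,2}\, dA \le C n \|f\|^{\,2} = Cn.$$
Combining this with the $L^\infty$-reduction gives $\bfR_n(\zeta) \le Cn$ for every $\zeta\in\C$, uniformly in $n$.

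Second, I would translate this to the rescaled kernel. By the definition of the rescaling \eqref{rmap},
$$R_n(z) = K_n(z,z) = \frac{1}{n\Lap Q(p)}\,\bfR_n(\zeta) \le \frac{C}{\Lap Q(p)}.$$
By the standing assumption that $Q$ is strictly subharmonic on a neighborhood of $S$, together with the compactness of $S$, we have $\Lap Q \ge c_0 > 0$ on (a neighborhood of) $S$. Since each moving point satisfies $p=p_n\in S$, this gives $R_n(z) \le C/c_0$ uniformly in $z\in\C$ and in $n$, which is the claim.

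There is no real obstacle here; the only thing to check carefully is that the neighborhood on which $\Lap Q$ is bounded above and below by positive constants is fixed (independent of $n$) so that the disks used in Lemma \ref{seppo} eventually lie inside it. This is exactly what the hypothesis $S\subset \Int\Sigma_0$ together with the strict subharmonicity of $Q$ near $S$ provides.
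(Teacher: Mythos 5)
Your proof is correct, and it is the standard argument: the paper itself does not prove Lemma \ref{penttinen} but defers to \cite{AKM}, Section 3, where the proof runs exactly along these lines (extremal characterization of the reproducing kernel diagonal, the sub-mean-value estimate of Lemma \ref{seppo} on $S$, the maximum principle of Lemma \ref{damp} to extend to $\C$, and the uniform positive lower bound on $\Lap Q$ over the compact droplet to absorb the normalization). The only cosmetic issue is the clash of notation between the polynomial factor $p$ and the moving point $p=p_n$, which you should rename.
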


We refer to \cite{AKM}, Section 3 for a discussion of proofs.

\subsection{Lower bounds for the one-point function} For a point $\zeta\in S$ we
denote the distance to the boundary by
$$\delta(\zeta)=\dist(\zeta,\d S).$$
It will be convenient to introduce a rescaled version of the distance to the boundary,
of a moving point $p=p_n$, by
$$a_n(p):=\sqrt{n\Lap Q(p_n)}\delta(p_n).$$

The following lemma gives apriori bounds for a limiting 1-point function.

\begin{lem} \label{leftlem2} Let $p=p_n$ be a moving point in $\bulk S$ or in $\d^* S$.
Suppose that, along a subsequence $n_k$, the limit $a=\lim_{k\to\infty} a_{n_k}(p)$
exists, being possibly $+\infty$. Let $R=\lim R_{n_{k_l}}$ be a limiting one-point function pertaining to the subsequence $n_k$. There are then positive constants
$C$ and $c$ such that
\begin{equation}\label{vuko}\babs{\,R(z)-\chi_{(-\infty,a)}(x)\,}\le Ce^{-c(x-a)^2},\qquad (x=\re z).\end{equation}
\end{lem}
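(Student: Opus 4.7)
The plan is to split the argument according to whether $a=+\infty$ (bulk case) or $a<+\infty$ (regular-boundary case). When $a=+\infty$, the hypothesis forces $p\in\bulk S$, and Lemma \ref{gininfty} immediately yields $K=G$ along every subsequential limit; hence $R(z)\equiv 1=\chi_{(-\infty,+\infty)}(x)$ and \eqref{vuko} is trivial.

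Assume henceforth $a<+\infty$, so $p\in\d^{*}S$. Since $e^{i\theta_n}$ is chosen to be the outer normal at the nearest boundary point $q_n$, the rescaling \eqref{rmap} sends $q_n$ to a point with real part $a_n(p)\to a$ and sends $S$ locally to a set Hausdorff-convergent to $\{\re z<a\}$. I then prove the upper and lower halves of \eqref{vuko} separately, both arguments resting on the obstacle-function inequality
$$
Q(\zeta)-\eqpot(\zeta)\ge c\,\dist(\zeta,S)^{2}
$$
in a neighborhood of $\d S$, where $\eqpot$ is the equilibrium potential (harmonic on $S^{c}$, equal to $Q$ on $S$, bounded by $Q$ everywhere); this is a standard consequence of the strict subharmonicity of $Q$ near $S$.

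For the upper bound in $\re z>a$, I would invoke the classical decay
$$
\bfR_n(\zeta)\le Cn\exp\bigl(-cn\,\dist(\zeta,S)^{2}\bigr),\qquad \zeta\in S^{c}\text{ close to }\d S,
$$
which follows from the extremal characterization $\bfR_n(\zeta)=\sup\{|f(\zeta)|^{2}:f\in\Pol_n,\ \|f\|\le 1\}$: for such $f=pe^{-nQ/2}$ the function $|p|^{2}e^{-n\eqpot}$ is subharmonic on $S^{c}$ and is bounded on $\d S$ by $\sup_{\d S}|f|^{2}\le Cn$ via Lemma \ref{seppo}, so the maximum principle together with the obstacle inequality delivers the bound. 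Rescaling yields $R_n(z)\le C\exp(-c(\re z-a_n)^{2})$ for $\re z>a_n$, and passage to the limit along $n_{k_l}$ finishes the upper half of \eqref{vuko}.

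The lower bound in $\re z<a$ is the main obstacle. For $\zeta\in S$ with $\delta(\zeta)=\delta$, I would construct a weighted polynomial $f_\zeta\in\Pol_n$ with $\|f_\zeta\|\le 1$ and $|f_\zeta(\zeta)|^{2}\ge n\Lap Q(\zeta)\bigl(1-C\exp(-cn\delta^{2})\bigr)$. Starting from the local coherent state $g_\zeta(\eta)=\sqrt{n\Lap Q(\zeta)}\,\exp\bigl(nQ(\zeta)/2+n\,\d Q(\zeta)(\eta-\zeta)\bigr)$---an entire function whose weighted form $g_\zeta e^{-nQ/2}$ is a Gaussian concentrated at $\zeta$ with value $\sqrt{n\Lap Q(\zeta)}$ there---I would multiply by a smooth cutoff $\chi$ supported in $D(\zeta,\delta)\subset S$ and equal to $1$ near $\zeta$, and then solve $\dbar u=g_\zeta\dbar\chi$ with weight $e^{-nQ}$ via H\"ormander's $L^{2}$ estimate. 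The function $p:=\chi g_\zeta-u$ is then entire and lies in $L^{2}(e^{-nQ})$, so the growth hypothesis \eqref{groww} forces $p$ to be a polynomial of degree $<n$ and $f_\zeta:=p\,e^{-nQ/2}\in\Pol_n$. The $\dbar$-error is supported where $|\eta-\zeta|\asymp\delta$, and there Taylor expansion of $Q$ combined with the obstacle inequality yields $|g_\zeta|^{2}e^{-nQ}\lesssim n\Lap Q(\zeta)\exp(-cn\delta^{2})$; H\"ormander's estimate then gives $\|u\|_{L^{2}(e^{-nQ})}^{2}\le C\exp(-cn\delta^{2})$, and Lemma \ref{seppo} upgrades this to $|u(\zeta)|^{2}e^{-nQ(\zeta)}\le Cn\exp(-cn\delta^{2})$. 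Feeding these into $\bfR_n(\zeta)\ge|f_\zeta(\zeta)|^{2}/\|f_\zeta\|^{2}$, rescaling, and passing to the limit along $n_{k_l}$ produces $R(z)\ge 1-C\exp(-c(a-\re z)^{2})$ for $\re z<a$, completing \eqref{vuko}.
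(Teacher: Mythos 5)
Your proposal is correct in outline, but it takes a genuinely different (and much longer) route than the paper. The paper's own proof is essentially bookkeeping around two imported results: the interior estimate \eqref{akmest} (Theorem 5.4 of \cite{AKM}, giving $\babs{\bfR_n(p_n)-n\Lap Q(p_n)}\le C(1+n e^{-c\,a_n(p)^2})$), which after rescaling yields \eqref{vuko} for $x\le a$, and the exterior estimate of Theorem 3.1 in \cite{AKMW}, which yields \eqref{vuko} for $x\ge a$ (with $c=2$). What you do instead is re-derive both inputs from scratch: your exterior argument (extremal characterization of $\bfR_n$, subharmonicity of $\babs{p}^2e^{-n\eqpot}$ on $S^c$, maximum principle, obstacle inequality) is precisely how the cited exterior estimate is proved, and your interior argument is the standard coherent-state-plus-H\"ormander construction behind the cited interior estimate. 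So the underlying mathematics agrees; your version buys self-containedness at the price of redoing two papers' worth of technical work, and the citation route is clearly what the author intends (see the Remark preceding the proof).

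Two steps in your interior construction need repair before the sketch closes. First, the coherent state must carry the second-order holomorphic term: with only $n\,\d Q(\zeta)(\eta-\zeta)$ in the exponent, one finds $\babs{g_\zeta}^2e^{-nQ}\approx n\Lap Q(\zeta)\exp\lpar -n\re\left[\d^2Q(\zeta)(\eta-\zeta)^2\right]-n\Lap Q(\zeta)\babs{\eta-\zeta}^2\rpar$, and the quadratic form $\Lap Q(\zeta)\babs{w}^2+\re\left[\d^2Q(\zeta)w^2\right]$ is indefinite whenever $\babs{\d^2Q(\zeta)}>\Lap Q(\zeta)$, which our standing hypotheses do not exclude; so your $g_\zeta e^{-nQ/2}$ need not concentrate at all. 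Adding $\tfrac n2\d^2Q(\zeta)(\eta-\zeta)^2$ to the exponent fixes this and restores the Gaussian profile $e^{-n\Lap Q(\zeta)|\eta-\zeta|^2}$ (up to cubic Taylor errors). Second, H\"ormander's inequality requires a weight strictly subharmonic on all of $\C$, whereas $Q$ is only assumed strictly subharmonic near $S$ (and may be $+\infty$ elsewhere); one must solve $\dbar u=g_\zeta\dbar\chi$ against a global strictly subharmonic substitute (built, e.g., from $\eqpot$ plus a logarithmic tail) and then compare weights on $\supp\dbar\chi\subset S$. Finally, a word is needed on the maximum principle over the unbounded component of $S^c$: the decay of $\babs{p}^2e^{-n\eqpot}$ at infinity, which follows from $\deg p\le n-1$ and $\eqpot(\zeta)=\log\babs{\zeta}^2+O(1)$, is what licenses it. All three points are standard, but as written they are gaps in the proposal.
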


(We here we use the convention $e^{-\infty}=0$.)

\begin{rem} For the proof of the lemma, we could simply refer to Theorem C in \cite{AKM} and Theorem 3.1 in \cite{AKMW}, but since we shall also use a slightly more precise estimates for finite $n$ one-point functions $R_n$, we give more details.
\end{rem}

\begin{proof}
 Our starting point is Theorem 5.4 in \cite{AKM}, which says that
\begin{equation}\label{akmest}\babs{\,\bfR_n(p_n)-n\Lap Q(p_n)\,}\le C\left(1+n\exp\left\{-c\cdot a_n(p)^2\right\}\right),\qquad p\in S.\end{equation}
Rename the subsequence in the hypothesis from "$n_k$'' to "$n$''
and
consider the rescaled $1$-point function
$$R_n(z)=\frac 1 {n\Lap Q(p_n)}\bfR_n(\zeta),\quad z=e^{-i\theta_n}\sqrt{n\Lap Q(p_n)}(\zeta-p_n).$$
Then
\begin{equation}\label{bjud}a_n(\zeta)=a_n(p)-x+o(1),\quad \zeta\in S,\end{equation}
where $x=\re z$. If $p$ is in the bulk, then $a_n(p)\to\infty$, and it
follows from \eqref{akmest} that $\babs{R_n(z)-1}\le C/n$ with a constant $C$ which can be chosen uniformly
for $z$ in a given compact set. If $p$ is in the regular boundary regime, then \eqref{akmest} and \eqref{bjud}
show that there are numbers $M_n$ with $M_n\to\infty$ such that
\begin{equation}\label{akk2}\babs{\,R_n(z)-1\,}\le C\left(n^{-1}+e^{-c(x-a_n(p))^2}\right),\quad x=\re z < a,\,|\,z\,|\le M_n.\end{equation}
This gives \eqref{vuko} when $x\le a=\lim a_n(p)$. On the other hand, when $x\ge a$, Theorem 3.1 in \cite{AKMW}
can be applied; it implies that any limiting $1$-point function $R$ at $p$ satisfies the estimate \eqref{vuko} for $x\ge a$ where we may take $c=2$.
\end{proof}

We shall say that a family of functions $R:\C\to [0,+\infty]$ is \textit{locally uniformly bounded below} (or \textit{l.u.b.b.}) if for each
$\La>0$ there exists
$\delta=\delta(\La)>0$ such that for all $R$ in the family we have $R(z)> \delta$
when $\babs{\,z\,}< \La$.

\smallskip

For a given $T>0$ we will denote by $X_T$ the set of moving points $p=p_n\in S$ such that $\dist(p_n,\d S)\ge T/\sqrt{n\Lap Q(p_n)}$. Let $\calR_T$ denote the family of all limiting one-point functions which arise at points of $X_T$.

\begin{lem} \label{below1.5}
If $T$ is large enough, then the family $\calR_T$ defined above is l.u.b.b.
\end{lem}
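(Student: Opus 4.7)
The plan is to combine a nontriviality estimate at the origin (which rules out the zero function) with a compactness-and-contradiction argument (which upgrades strict positivity to a uniform lower bound).

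\emph{Nontriviality.} For any moving point $p=(p_n) \in X_T$ we have $a_n(p) \geq T$, so the apriori estimate \eqref{akmest} (Theorem 5.4 of \cite{AKM}) yields
\[
\babs{R_n(0) - 1} \;=\; \frac{\babs{\bfR_n(p_n) - n\Lap Q(p_n)}}{n\Lap Q(p_n)} \;\le\; \frac{C}{n\Lap Q(p_n)} + \frac{C\, e^{-cT^2}}{\Lap Q(p_n)}.
\]
Since $Q$ is strictly subharmonic in a neighbourhood of the compact set $S$, there exists $c_0>0$ with $\Lap Q \geq c_0$ on $S$. Choosing $T$ large enough to enforce $Ce^{-cT^2}/c_0 < 1/4$ gives $R_n(0) \ge 1/2$ for every $p\in X_T$ and every large $n$. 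Passing to any subsequential limit, every $R \in \calR_T$ satisfies $R(0) \ge 1/2$, so $R$ is nontrivial; by Lemma \ref{muck} we conclude $R > 0$ on all of $\C$.

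\emph{Uniform lower bound.} Suppose, for contradiction, that $\calR_T$ is not l.u.b.b.\ for such a $T$. Then there exist $\La > 0$, a sequence $R^{(k)} \in \calR_T$, and points $z^{(k)}$ with $\babs{z^{(k)}} \le \La$ such that $R^{(k)}(z^{(k)}) \to 0$. Each $R^{(k)}$ is itself a locally uniform limit $\lim_l R_{n_l^{(k)},\, p^{(k)}}$ along a subsequence associated with some moving point $p^{(k)} \in X_T$, so I can select indices $m_k$, strictly increasing in $k$, with $R_{m_k,\, p^{(k)}}(z^{(k)}) \to 0$. Assemble a single moving point $q$ by setting $q_{m_k} := p^{(k)}_{m_k}$ and, at the remaining indices, letting $q_n$ equal any point of $S$ with $a_n(q) \ge T$ (e.g., a fixed interior point for large $n$); then $q \in X_T$, and by construction $R_{m_k,\, q}(z^{(k)}) = R_{m_k,\, p^{(k)}}(z^{(k)}) \to 0$.

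Now pass to a subsequence along which $z^{(k)} \to z_\infty \in \overline{D(0,\La)}$ and, using Lemma \ref{cpthm} together with the cocycle-invariance of one-point functions, along which $R_{m_k,\, q}$ converges locally uniformly to some $R^\infty \in \calR_T$. Local uniform convergence and $z^{(k)} \to z_\infty$ give $R^\infty(z_\infty) = \lim_k R_{m_k,\, q}(z^{(k)}) = 0$, contradicting $R^\infty > 0$ from the nontriviality step. The main technical point is the assembly of the single moving point $q$ and the verification that convergence transfers across the diagonal; once this bookkeeping is in place, the contradiction is immediate.
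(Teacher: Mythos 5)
Your proposal is correct and follows essentially the same route as the paper: a nontriviality bound $R(0)\ge\delta$ at the origin from the one-point function estimate, promotion to $R>0$ via the zero-one law of Lemma \ref{muck}, and a diagonal/compactness contradiction argument using Lemma \ref{cpthm}. Your explicit assembly of the single moving point $q$ and the handling of the varying evaluation points $z^{(k)}\to z_\infty$ just make precise the bookkeeping that the paper's diagonal sequence $R^n_n$ leaves implicit.
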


\begin{proof} Take $\delta$ in the interval $0<\delta<1$. For a point $p\in X_T$ the estimate \eqref{akk2} gives
$\babs{\, R_n(0)-1\,}\le C(n^{-1}+e^{-cT^2})\le 1-\delta$ if $n$ and $T$ are large enough. Hence $R(0)\ge \delta$ for all $R\in \calR_T$, and it follows that
$R>0$ everywhere on $\C$, by the zero-one law in Lemma \ref{muck}. If the family of all one-point functions constructed in this way is not l.u.b.b., we can find a point $z_0\in \C$ and a sequence $R^1,R^2,\ldots$ in $\calR_T$ such that $R^n(z_0)\to 0$ as $n\to\infty$. Write $R^j=\lim_{n\to\infty}R^j_n$.

By Lemma \ref{cpthm}, the diagonal sequence $R_n^n$ contains a locally uniformly convergent subsequence with limit $R$ such that $R(z_0)=0$. By the zero-one law we have $R(0)=0$. This is a contradiction, since
$R_n^n(0)\ge \delta$ for all large $n$.
\end{proof}

\begin{rem}
Note that if $p,q$ are moving points with $|\,p_n-q_n\,|\le T/\sqrt{n}$, then by our choice of rescaling, any limiting one point functions at $p$ and $q$ respectively, will be translates of each other, say $R_q(z)=R_p(z+c)$ where $|c|\le \const \cdot T$. This is used to prove the following two lemmas.
\end{rem}

For given $C>0$ and $s\ge 0$ we let $\tilde{X}_{C,s}$ denote the set of moving points $p=p_n$ in $S_s=S+D(0,s/\sqrt{n})$
such that $\dist(p_n,p_*)\ge Cn^{-1/\nu}$ whenever $p_*\in \d S$ is a singular boundary point of type $\nu$.
Let $\tilde{\calR}_{C,s}$ denote the family of one-point functions at points of $\tilde{X}_{C,s}$.

\begin{lem} \label{below}
If $C$ is large enough, then $\tilde{\calR}_{C,s}$ is l.u.b.b.
\end{lem}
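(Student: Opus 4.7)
The plan is to reduce to Lemma \ref{below1.5} via the translation property recorded in the remark preceding this lemma. First I fix $T$ so large that $\calR_T$ is l.u.b.b.\ (by Lemma \ref{below1.5}). For each $p=p_n\in\tilde{X}_{C,s}$ I will construct a companion moving point $q=q_n\in X_T$ satisfying $\babs{\,p_n-q_n\,}\le M/\sqrt{n}$, where $M$ depends only on $T$, provided $C$ is chosen sufficiently large depending on $T$. The translation remark then writes every limiting one-point function at $p$ as $R_p(z)=R_q(z+c)$ for some limiting one-point function $R_q$ at $q$ and some $|c|\le\const\cdot M$. Given $\La>0$, the l.u.b.b.\ property of $\calR_T$ supplies $\delta=\delta(\La+\const\cdot M)>0$ such that $R_q(w)\ge\delta$ for $|w|\le\La+\const\cdot M$; taking $w=z+c$ with $|z|\le\La$ yields $R_p(z)\ge\delta$, which is the required l.u.b.b.\ for $\tilde{\calR}_{C,s}$.

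The geometric construction of $q_n$ is the heart of the argument. Since $\nu\ge 3$ forces $n^{-1/\nu}\gg n^{-1/2}$, the hypothesis $\dist(p_n,p_*)\ge Cn^{-1/\nu}$ for every singular boundary point $p_*$ of type $\nu$ provides much more room around $p_n$ than the rescaling scale $1/\sqrt{n}$. If the closest point $p_n'\in\d S$ to $p_n$ lies on a regular arc, I move from $p_n'$ inward along the outer normal by $2T/\sqrt{n\Lap Q(p_n')}$ to obtain $q_n\in X_T$ at Euclidean distance $O(T/\sqrt{n})$ from $p_n$. If $p_n'$ lies on an arc emanating from a singular boundary point $p_*$, the conformal expansions $u(x)=\tfrac12 x^2+\cdots$, $v(x)=\tfrac b\nu x^\nu+\cdots$ from Section \ref{drp}, combined with $\dist(p_n,p_*)\ge Cn^{-1/\nu}$, ensure that for $C$ sufficiently large (depending on $T$) the droplet has width at least $T/\sqrt{n}$ in a Euclidean neighbourhood of $p_n'$, so the same normal-direction construction again yields an admissible $q_n$.

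The main obstacle is to make this geometric step quantitatively uniform across the (finitely many, by Sakai's regularity theorem) singular boundary points of $\d S$ and to pin down the precise dependence $C=C(T)$. A secondary check is that the phase $\theta_n$ attached to $p_n$ in \eqref{rmap} must agree with that attached to $q_n$ up to $O(1/\sqrt{n})$, so that the translation remark applies verbatim; this follows because the closest boundary points to $p_n$ and to $q_n$ lie within $O(1/\sqrt{n})$ of each other and $\d S$ is real-analytic away from its singularities, so the outer normals there differ by $O(1/\sqrt{n})$. The zero--one law (Lemma \ref{muck}) is absorbed into the translation remark, since the translated function $R_p$ is a nontrivial limiting one-point function whenever $R_q$ is.
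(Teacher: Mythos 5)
Your proposal is correct and follows essentially the same route as the paper: the paper's proof also reduces to Lemma \ref{below1.5} via the translation remark by producing, for each $p\in\tilde{X}_{C,s}$, a companion point $q_n\in S$ with $\dist(q_n,\d S)\ge T/\sqrt{n}$ and $\babs{\,p_n-q_n\,}\le (T+s)/\sqrt{n}$, where $T\to\infty$ as $C\to\infty$. The paper dismisses the construction of $q_n$ as ``a geometric consideration,'' so your explicit treatment of the cusp/double-point geometry (using $n^{-1/\nu}\gg n^{-1/2}$ for $\nu\ge 3$) simply supplies details the paper omits.
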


\begin{proof} Fix $s\ge 0$. A geometric consideration shows that if $p\in \tilde{X}_{C,s}$ then there is a constant $T$ with $T\to\infty$ as $C\to\infty$ and points $q_n\in S$ with distance at least $T/\sqrt{n}$ to the boundary such that $\babs{\,p_n-q_n\,}\le (T+s)/\sqrt{n}$. The statement now follows from Lemma \ref{below1.5} and the remark above.
\end{proof}

We will need the following, slightly more precise version of Lemma \ref{below}.

\begin{lem}\label{beloww}
Suppose that $p=p_n$ is a moving point in $\tilde{X}_{C,s}$ where $C$ is large enough. Then there are constants
$c>0$ and $n_0$ independent of $p$ such that $\bfR_n(p_n)\ge cn$ when $n\ge n_0$.
\end{lem}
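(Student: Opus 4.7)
The strategy is to unfold the rescaling and reduce to a uniform version of the pointwise statement already available from Lemma \ref{below}, via a diagonal/compactness argument. First I would observe that, in terms of the rescaled one-point function $R_n$ at $p_n$,
\[
\bfR_n(p_n) \;=\; n\,\Lap Q(p_n)\, R_n(0).
\]
Because $Q$ is strictly subharmonic in a neighbourhood of $S$, there exist $\eps>0$ and $m>0$ with $\Lap Q\ge m$ on $S_\eps$. For $n$ large enough we have $s/\sqrt n<\eps$, so $p_n\in S_{s/\sqrt n}\subset S_\eps$ and hence $\Lap Q(p_n)\ge m$. It therefore suffices to produce constants $\delta>0$ and $n_0$, independent of the moving point $p\in \tilde X_{C,s}$, such that $R_n(0)\ge \delta$ for all $n\ge n_0$; then $c:=m\delta$ will do.

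To establish this uniform-in-$p$ lower bound, I would argue by contradiction. Assume it fails. Then there exist moving points $p^{(k)}=(p^{(k)}_n)_n$ in $\tilde X_{C,s}$ and indices $n_k\to\infty$ with $R^{(k)}_{n_k}(0)<1/k$, where $R^{(k)}_n$ denotes the rescaled one-point function at $p^{(k)}_n$. Splice them into a single moving point $q=(q_n)$ by setting $q_{n_k}:=p^{(k)}_{n_k}$ and choosing the remaining $q_n$ arbitrarily within $\tilde X_{C,s}$; then $q\in \tilde X_{C,s}$, and by construction $R^{q}_{n_k}(0)=R^{(k)}_{n_k}(0)\to 0$, where $R^q_n$ is the rescaled one-point function at $q_n$.

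Now apply Lemma \ref{cpthm} to the sequence $c_n K^q_n$: after a further passage to a subsequence of $(n_k)$, the rescaled kernels converge locally uniformly to a limiting kernel $K=G\Psi$, whence the diagonal values $R^q_n(z)=K^q_n(z,z)$ (which are cocycle-invariant) converge locally uniformly to $R(z)=K(z,z)$. In particular $R(0)=\lim R^q_{n_k}(0)=0$. But $R\in\tilde{\calR}_{C,s}$, so provided $C$ has been taken large enough, Lemma \ref{below} gives $R(0)\ge \delta(0)>0$, a contradiction. This yields the required uniform bound $R_n(0)\ge \delta$.

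The only delicate step is the diagonal construction of $q$, which must be done so that the finite-$n$ failures for different $p^{(k)}$ get recorded simultaneously as failures for a single moving point; everything else is a routine application of the results already in hand. Combining the uniform bound with the estimate $\Lap Q(p_n)\ge m$ completes the proof.
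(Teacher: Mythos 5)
Your proof is correct, and it works for the same underlying reason as the paper's (a normal-families compactness argument whose positivity input ultimately rests on the zero-one law of Lemma \ref{muck}), but the decomposition is genuinely different. The paper does not route through Lemma \ref{below} at all: it takes near-infimizers $q_n$ of $\bfR_n$ over $\tilde{X}_{C,s}$, pairs each with a nearby point $p_n$ at distance at least $T/\sqrt{n}$ from $\d S$ with $\babs{p_n-q_n}\le (T+s)/\sqrt{n}$, rescales about $p_n$, and notes that the image $z_0$ of $q_n$ stays in a bounded set and satisfies $R(z_0)=0$ for the limiting one-point function at $p$; the zero-one law then forces $R\equiv 0$, contradicting the finite-$n$ interior estimate $R_n(0)\ge\delta$ from the proof of Lemma \ref{below1.5}. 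You instead rescale about the bad points themselves, splice them into a single moving point of $\tilde{X}_{C,s}$ by a diagonal construction, and observe that the resulting limit lies in $\tilde{\calR}_{C,s}$ with $R(0)=0$, contradicting the l.u.b.b.\ property of Lemma \ref{below}. Your version buys a cleaner separation of concerns --- the translate trick and the zero-one law are already packaged inside Lemma \ref{below}, so the only new content is the uniformization-over-$p$ step --- at the cost of being less self-contained; the paper's version re-runs the translate argument explicitly but needs only the earlier Lemma \ref{below1.5}. One small point worth making explicit in your write-up: the diagonal values $R_n(0)$ are unaffected by the cocycles $c_n$ and by the choice of rescaling phase $\theta_n$, which is exactly why the spliced limit records the failures; you note this, and it is the only place where care is needed.
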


\begin{proof} Fix $0<\delta<1$. Pick $p\in \tilde{X}_{C,s}$ and let $R_n$ be the one-point function rescaled about $p_n$.
By the proof of Lemma \ref{below1.5} there are
$T$ and $n_0$ such that
if $p_n\in S$ has distance at least $T/\sqrt{n}$ to $\d S$, then $R_n(0)\ge \delta$, i.e., $\bfR_n(p_n)\ge \delta n$,
when $n\ge n_0$.

Now put $\delta_n=\frac 1 n \inf_{q\in\tilde{X}_{C,s}}\left\{\bfR_n(q_n)\right\}$ and assume that $\delta_n\to 0$ as $n\to\infty$ along some subsequence.

Fix $s\ge 0$ and a large number $T$. If $C$ is large enough, we can pick $q\in\tilde{X}_{C,s}$ with $\bfR_n(q_n)/n\le 2\delta_n$ and then $p_n\in S$ with
$\dist(p_n,\d S)\ge T/\sqrt{n}$ and $|p_n-q_n|\le (T+s)/\sqrt{n}$. Passing to a subsequence, we can assume that the image of $q_n$ under the rescaling converges to a point $z_0\in \C$ with $|z_0|\le \const(T+s)$,  so
$R(z_0)=0$ for the corresponding limiting $1$-point function about $p$. By the zero-one law, $R\equiv 0$, which gives a contradiction. We conclude that $\delta_n\ge c>0$ for some constant $c>0$ independent of $n$.
\end{proof}

\subsection{Ward's equation and translation invariance}
Let $R$ be a limiting one-point function. The Hermitian function $\Psi$ in the corresponding limiting kernel $K=G\Psi$
is uniquely determined by $R$ by polarization, since $\Psi(z,z)=R(z)$.
If $R$ is nontrivial, then $R$ is everywhere strictly positive by the zero-one law (Lemma \ref{muck}). Hence
the limiting \textit{Berezin kernel}
$$B(z,w):=\frac {\babs{\,K(z,w)\,}^{\,2}}{R(z)}$$ is well-defined and is completely determined by $R$.
We recall from \cite{AKM} that $R$ gives rise to a solution to \textit{Ward's equation}
\begin{equation}\label{ward}\dbar C=R-1-\Lap\log R,\quad \text{where}\quad C(z)=\int \frac {B(z,w)}{z-w}\, dA(w).
\end{equation}

It is convenient to consider
"generalized \ti~kernels'' of the form
\begin{equation}\label{genti}K(z,w)=G(z,w)\Phi(z+\bar{w})\end{equation}
where $\Phi$ is an entire function. We will write $\gamma(z)=(2\pi)^{-1/2}e^{-z^2/2}$ and write
$$\gamma*g(z):=\int_\R\gamma(z-t)g(t)\, dt,\qquad z\in\C,$$
for the "convolution'' of $\gamma$ with a function $g\in L^\infty(\R)$.
The following result from \cite{AKM} summarizes the properties of such kernels which will be needed in the sequel.

\begin{lem} \label{back}
Let $K$ be a kernel of the form \eqref{genti}
where $\Phi$ is an entire function. Then $K$ satisfies Ward's equation if and only if there is an interval
$I\subset \R$ such that $\Phi=\gamma*\1_I$.
Further, $K$ satisfies the mass-one equation if and only if $\Phi=\gamma*\1_e$ for some Borel set $e\subset\R$ of positive measure.
\end{lem}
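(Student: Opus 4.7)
The plan is to reduce each equation to a functional equation on $\R$ for $\Phi$, using $R(z)=K(z,z)=\Phi(2\re z)$ and the translation invariance of $K$ under vertical shifts $(z,w)\mapsto(z+it_0,w+it_0)$.

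For the mass-one equation, I substitute $u=s+it:=z+\bar w$ in $\int_\C |K(z,w)|^2\,dA(w)$. Since $|z-w|^2=(2x-s)^2+t^2$ and the Jacobian is $1$, the equation becomes
\begin{equation*}
\frac{1}{\pi}\iint_{\R^2}e^{-(2x-s)^2}e^{-t^2}|\Phi(s+it)|^2\,ds\,dt=\Phi(2x).
\end{equation*}
Writing $\Phi=\gamma*f$ in the Bargmann-style representation, one finds $\Phi(s+it)=(2\pi)^{-1/2}e^{t^2/2}\int_\R e^{-(s-\sigma)^2/2-it(s-\sigma)}f(\sigma)\,d\sigma$, so Plancherel in the $t$-integral yields
\begin{equation*}
\int_\R e^{-t^2}|\Phi(s+it)|^2\,dt=\int_\R e^{-(s-\sigma)^2}|f(\sigma)|^2\,d\sigma.
\end{equation*}
A further Gaussian convolution in $s$ collapses the whole equation to $\gamma*|f|^2=\gamma*f$ on $\R$; since $\widehat\gamma$ is nowhere zero, $|f|^2=f$ a.e., hence $f=\1_e$ for a Borel set $e$ of positive measure (positivity coming from $\Phi\not\equiv 0$).

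For Ward's equation, translation invariance of $K$ makes $B(z,w)$ invariant under $(z,w)\mapsto(z+it_0,w+it_0)$, so $C(z)=\int_\C B(z,w)/(z-w)\,dA(w)$ depends only on $x=\re z$ and $\dbar C=\tfrac{1}{2}\partial_x C$; the right-hand side $R-1-\Lap\log R=\Phi(2x)-1-(\log\Phi)''(2x)$ is likewise a function of $x$ alone. Substituting $\Phi=\gamma*f$ and evaluating $C$ by contour deformation in the $w$-plane (exploiting the Fourier representation of $f$), Ward's equation reduces to a one-variable distributional identity whose only bounded solutions are $f=\1_I$ for an interval $I\subset\R$ (possibly a half-line or all of $\R$). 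The main obstacle is this last reduction: $C$ is a principal-value integral on $\C$ rather than on $\R$, and extracting a clean equation for $f$ requires careful organization; the computation is carried out in detail in \cite{AKM}, and I would quote it rather than redo it in full.
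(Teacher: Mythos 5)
The paper does not prove this lemma at all: it is quoted verbatim from \cite{AKM} ("The following result from \cite{AKM} summarizes..."), so there is no in-paper argument to compare against, and your decision to lean on \cite{AKM} for the hard analysis is consistent with what the author does. Within your own sketch, the mass-one computation is correct as far as it goes: the change of variables $u=z+\bar w$, the identity $e^{-t^2}|\Phi(s+it)|^2=\int_\R e^{-(s-\sigma)^2}|f(\sigma)|^2\,d\sigma$ via Plancherel, and the collapse to $\gamma*|f|^2=\gamma*f$ (hence $f=\1_e$ by injectivity of the Gauss--Weierstrass transform) all check out, and run backwards they give the "if" direction cleanly.

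The genuine gap is in the "only if" direction of the mass-one claim: your entire reduction presupposes that $\Phi$ \emph{admits} a representation $\Phi=\gamma*f$ with $f$ bounded (or at least locally square-integrable with the growth needed for Plancherel). That is the substantive content of the implication and it is not automatic -- an arbitrary entire function is not a Weierstrass transform (e.g.\ $e^{z^2}$ grows too fast in the imaginary direction). What the mass-one equation hands you directly is only $\Phi(2x)=\frac1\pi\int e^{-(2x-s)^2}P(s)\,ds$ with $P(s)=\int e^{-t^2}|\Phi(s+it)|^2\,dt\ge 0$, i.e.\ $\Phi=\rho*P$ for the variance-$1/2$ Gaussian $\rho$; passing from this to $\Phi=\gamma*f=\rho*(\rho*f)$ requires showing that $P$ itself lies in the range of the Gauss transform, which you do not address. (Note also that $\Phi\equiv0$ satisfies the mass-one equation but is not $\gamma*\1_e$ with $|e|>0$, so some nontriviality hypothesis must be smuggled in.) The Ward-equation half of your proposal is an outline rather than a proof: the assertion that the reduced one-variable identity has only the solutions $f=\1_I$ is exactly the nontrivial step, and you explicitly defer it to \cite{AKM}. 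Since the paper itself only cites \cite{AKM}, this is acceptable as a matter of sourcing, but it should be flagged as a citation, not presented as a proof.
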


To apply the lemma, it is convenient to assume that the \lti-property holds, i.e., each limiting kernel $K$ at a moving point $p=p_n$ is translation invariant.

Assuming that $p\in\Bulk S\cup\d^* S$ it follows from the estimates
Lemma \ref{leftlem2} and the representation $\Phi=\gamma*\1_I$ in Lemma \ref{back} that the corresponding function
$\Phi$ is of the form
$$\Phi_m(z)=\gamma*\1_{(-\infty,m)}(z)=F(z-m)$$
for some $m\in \R\cup\{+\infty\}$, where
\begin{equation}\label{freb}F(z):=\int_{-\infty}^0 \gamma(z-t)\, dt=\frac 1 2 \erfc\left(\frac z{\sqrt{2}}\right).\end{equation}
This function was termed the "free boundary plasma function'' in \cite{AKM}. 
(If $m=+\infty$, we interpret $\Phi_m\equiv 1$.)

The limiting kernel pertaining to $\Phi_m$ will be denoted
\begin{equation}\label{ka}K^{m}(z,w):=G(z,w)F(z+\bar{w}-2m).\end{equation}

\begin{rem} For a true limiting kernel we have $m=0$ at almost every fixed regular boundary point, by Theorem D from \cite{AKM}. In general, if $p_n\in S$ is in the regular boundary regime, then $m\ge 0$. This follows from the exterior estimate in \cite{AKMW}. In the sequel, we shall however merely need the fact that $m>-\infty$.
\end{rem}

Our next result shows that the generalized Berezin kernels
$$B^{m}(z,w):=\frac {\babs{K^{m}(z,w)}^2}{K^{m}(z,z)}$$
satisfy the mass-one equation uniformly in $m$.

\begin{lem} \label{uniint} Put
$$\mu(\La)=\mu(\La,z,m):=\int_{|\,w-z\,|\le \La}B^{m}(z,w)\, dA(w).$$
Then $\mu(\La)\to 1$ as $\La\to \infty$ uniformly in $(z,m)$ when $2\re (z-m)\le M$ for some fixed $M<+\infty$.
\end{lem}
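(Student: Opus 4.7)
The first step is to reduce to a one-parameter family of kernels. Setting $v:=w-z$ and $\alpha:=2\re(z-m)$, the identities $z+\overline{z+v}-2m=\alpha+\bar v$, $|z-(z+v)|=|v|$ together with $G(z,z)=1$ yield
\[
B^{m}(z,z+v)=\frac{e^{-|v|^{2}}|F(\alpha+\bar v)|^{2}}{F(\alpha)}=:B_\alpha(v).
\]
The mass-one equation for $K^{m}$ from Lemma \ref{back} reads $\int_\C B_\alpha\,dA=1$, so the claim reduces to showing that $\int_{|v|>\Lambda}B_\alpha\,dA\to 0$ as $\Lambda\to\infty$, uniformly in $\alpha\le M$. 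Since $F$ is strictly decreasing on $\R$ we have $F(\alpha)\ge F(M)>0$ throughout the allowed range, so it is enough to produce an $\alpha$-independent integrable majorant for the numerator $e^{-|v|^{2}}|F(\alpha+\bar v)|^{2}$.

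Write $v=\xi+i\eta$ and $b:=\alpha+\xi$. Starting from the representation $F(z)=\int_0^\infty\gamma(z+u)\,du$ and the elementary identity $|\gamma(z+u)|=(2\pi)^{-1/2}e^{(\im z)^{2}/2}e^{-(\re z+u)^{2}/2}$, one obtains directly
\[
|F(b-i\eta)|\le e^{\eta^{2}/2}\,F(b)\le e^{\eta^{2}/2}.
\]
Consequently, on the strip $\{|\eta|\le 1\}$ we have the first bound $e^{-|v|^{2}}|F(\alpha+\bar v)|^{2}\le e\cdot e^{-|v|^{2}}$, which clearly lies in $L^{1}(\C)$.

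To cover the complementary region $\{|\eta|>1\}$ we need a further $1/|\eta|$ gain. The change of variable $s=b+u$ yields
\[
F(b-i\eta)=(2\pi)^{-1/2}\,e^{\eta^{2}/2}\int_{b}^{\infty}e^{-s^{2}/2+i\eta s}\,ds,
\]
and one integration by parts in $s$ produces the boundary contribution $-(i\eta)^{-1}e^{-b^{2}/2+i\eta b}$ together with a remainder of modulus $\le |\eta|^{-1}\int_b^\infty|s|e^{-s^{2}/2}\,ds$. The latter moment is bounded by $2$ \emph{uniformly in $b\in\R$} (split the integral at $0$ and compute), so
\[
|F(b-i\eta)|\le\frac{3\,e^{\eta^{2}/2}}{\sqrt{2\pi}\,|\eta|}.
\]
This gives the second bound $e^{-|v|^{2}}|F(\alpha+\bar v)|^{2}\le \frac{9}{2\pi}\cdot e^{-\xi^{2}}/\eta^{2}$, again integrable on $\{|\eta|>1\}$.

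Taking the maximum of the two estimates defines an $L^{1}(\C)$ function $g$, independent of $\alpha\le M$, satisfying $B_\alpha\le g/F(M)$ pointwise. Therefore $\int_{|v|>\Lambda}B_\alpha\,dA\le F(M)^{-1}\int_{|v|>\Lambda}g\,dA\to 0$ as $\Lambda\to\infty$, uniformly in $\alpha\le M$, which is exactly the required uniform convergence. The only delicate step is the uniformity in $b\in\R$ of the integration-by-parts bound; the key point is that $\int_b^\infty|s|e^{-s^{2}/2}\,ds$ remains bounded even as $b\to-\infty$, which is what lets us accommodate arbitrarily negative values of $\alpha$.
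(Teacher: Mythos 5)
Your proof is correct and follows essentially the same route as the paper: reduce via the mass-one equation to a tail estimate, bound the denominator below by $F(M)$, and dominate $\babs{\,K^m(z,w)\,}^{\,2}$ by an integrable function independent of the parameters. The only difference is that the paper cites the off-diagonal bound $\babs{\,K^m(z,w)\,}\le e^{-|z-w|^2/2}+e^{-[\re(z-w)]^2/2}H(\im(z-w))$ from Lemma 8.5 of \cite{AOC} (together with the standard Dawson-function asymptotics), whereas you re-derive the equivalent two-region estimate from scratch by integration by parts, correctly isolating the uniformity in $b$ of $\int_b^\infty|s|e^{-s^2/2}\,ds$ as the point that handles arbitrarily negative $\alpha$.
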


\begin{proof} By the mass-one equation, we have
\begin{equation}\label{1ml}1-\mu(\La)=\int_{|\,w-z\,|> \La}\frac {\babs{\,K^{m}(z,w)\,}^{\,2}}{K^{m}(z,z)}\, dA(w).\end{equation}
If $2\re (z-m)\le M<+\infty$ we have (with $F$ the plasma function \eqref{freb})
$$K^{m}(z,z)=F(z+\bar{z}-2m)\ge F(M)>0,$$
so with $C_1=1/F(M)$ we have
\begin{equation}\label{bul}1-\mu(\La)\le C_1\int_{|\,w-z\,|>\La}\babs{\,K^{m}(z,w)\,}^{\,2}\, dA(w).\end{equation}
We now invoke the estimate in \cite{AOC}, Lemma 8.5,
$$\babs{K^{m}(z,w)}\le e^{-|\,z-w\,|^2/2}+e^{-[\re(z-w)]^2/2}H(\im(z-w)),$$
where $H$ is Dawson's function,
$$H(t)=(2\pi)^{-1/2}e^{-t^2/2}\int_0^t e^{x^2/2}\, dx.$$ By standard
asymptotics, $\babs{H(t)}\le C_2(1+|t|)^{-1}$ for all $t\in \R$. (Cf. the proof of Lemma 8.5 in \cite{AOC} or \cite{SO}.) This shows that
\begin{align*}\int_{|w-z|>\La}&\babs{\,K^{m}(z,w)\,}^{\,2}\, dA(w)\\
&\le
\int_{|w-z|>\La}\left[C_3e^{-|z-w|^2/2}+C_4 \frac {e^{-[\re(z-w)]^2}}
{1+[\im(z-w)]^2}\right]\, dA(w).
\end{align*}
The right hand side clearly tends to $0$ as $\La\to\infty$, independently of $(z,m)$. This finishes the proof, in view of \eqref{1ml} and \eqref{bul}.
\end{proof}

\section{Interpolating families and $M$-families}
Consider a sequence $\config=\{\config_n\}$ of $n$-point configurations in $S$,
$$\config_n=\left\{\zeta_{n1},\ldots,\zeta_{nn}\right\}.$$
We recall the definitions of two classes of families from \cite{AOC}.

Fix a real parameter $\rho$ (close to 1) and consider the space $\Pol_{n\rho}$ of weighted polynomials of degree $n\rho$. We can assume that $n\rho$ is an integer.

We say that $\config$ is \textit{$\rho$-interpolating} if there is a constant $C=C(\rho)$ such that for all families of values $c=\{c_n\}_1^\infty$,
$c_n=\{c_{nj}\}_{j=1}^n$
there exists a sequence $f_n\in\Pol_{n\rho}$ such that
$f_n\left(\zeta_{nj}\right)=c_{nj}$, $j=1,\ldots,n$, and
\begin{equation}\label{rip}\left\|\,f_n\,\right\|^{\,2}\le C\frac 1 {n}\sum_{j=1}^n\babs{\,c_{nj}\,}^{\,2}.\end{equation}

The family $\config$ is called \textit{uniformly separated} if there is a constant $s>0$ independent of $n$ such that $$\babs{\,\zeta_{nj}-\zeta_{nj'}\,}\ge \frac s{\sqrt{n}},\quad \text{whenever}\quad j\ne j'.´$$
A number $s$ with this property is a \textit{separation constant} for $\config$.

For a subset $\Omega\subset S$ and a fixed $s>0$ we write
$$\Omega_s:=\Omega+D(0;s/\sqrt{n}).$$
We shall say that $\config$ is of \textit{class $M_\rho$} if there is an $s>0$ such that $\config$ is $2s$-separated and there is a constant $C=C(s,\rho)$ such that
$$\int_{\drop_s}\babs{\,f\,}^{\,2}\le C\frac 1 {n}\sum_{j=1}^n\babs{\,f\left(\zeta_{nj}\right)\,}^{\,2},\qquad f\in\Pol_{n\rho}.$$

Intuitively, interpolating families are sparse and $M$-families are dense.
We shall use two simple facts about these classes.

\begin{lem}\label{seppo2} If $\config$ is $2s$-separated and $\Omega\subset \drop$, then for all $f\in\Pol_n$
$$\frac 1 n\sum_{\zeta_{nj}\in \Omega}\babs{\,f(\zeta_{nj})\,}^{\,2}\le Cs^{-2}\int_{\Omega_s}\babs{\,f(\zeta)\,}^{\,2}\, dA(\zeta),$$
where $C$ depends only on the upper bound of $\Lap Q$ on $\drop_s$.
\end{lem}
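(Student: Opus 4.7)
The plan is to combine the local $L^\infty$-to-$L^2$ bound of Lemma \ref{seppo} with the disjointness of small disks around the points of $\config_n$ given by $2s$-separation, tracking how the constant depends on $s$.

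First, I would apply a sub-mean-value estimate to each weighted polynomial $f=pe^{-nQ/2}$ on the disk $D(\zeta_{nj},s/\sqrt n)$. Writing
\[
|f(\eta)|^2 = |p(\eta)|^2 e^{-nQ(\eta)}
\]
and using that $|p|^2$ is subharmonic while $nQ$ varies by at most $(\sup_{\Omega_s}\Lap Q)\cdot s^2$ across a disk of radius $s/\sqrt n$, the standard sub-mean-value property applied on a disk of area $s^2/n$ yields
\[
|f(\zeta_{nj})|^2 \le \frac{C n}{s^2}\int_{D(\zeta_{nj},\,s/\sqrt n)} |f(\eta)|^2\, dA(\eta),
\]
with $C$ depending only on the upper bound of $\Lap Q$ on $\Omega_s$. (This is essentially Lemma \ref{seppo} with the $s$-dependence of the constant made explicit; the factor $1/s^2$ arises because Lebesgue area of the disk scales like $s^2/n$.)

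Next, since $\config_n$ is $2s$-separated, the disks $D(\zeta_{nj},s/\sqrt n)$ are pairwise disjoint. Moreover, if $\zeta_{nj}\in\Omega$, then $D(\zeta_{nj},s/\sqrt n)\subset \Omega_s$ by the very definition of $\Omega_s$. Summing the pointwise bound over those $j$ with $\zeta_{nj}\in\Omega$ gives
\[
\sum_{\zeta_{nj}\in\Omega} |f(\zeta_{nj})|^2 \le \frac{Cn}{s^2}\sum_{\zeta_{nj}\in\Omega}\int_{D(\zeta_{nj},\,s/\sqrt n)}|f|^2\, dA \le \frac{Cn}{s^2}\int_{\Omega_s}|f|^2\, dA,
\]
and dividing by $n$ produces the claimed inequality.

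The only non-routine step is the first one: one must verify that the sub-mean-value estimate can be applied uniformly with the sharp $s^{-2}$ scaling while absorbing the factor $e^{-nQ}$ into a bounded multiplicative error. This reduces to the inequality $n|Q(\eta)-Q(\zeta_{nj})|\le C$ on $D(\zeta_{nj},s/\sqrt n)$, which follows from Taylor's formula and the assumed upper bound on $\Lap Q$ over $\Omega_s\subset S_s$; once this is in hand, the rest is a clean packing argument using $2s$-separation.
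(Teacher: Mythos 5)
Your proposal is correct and is essentially the paper's proof: the paper disposes of this lemma in one line by citing Lemma \ref{seppo}, and your argument --- the local sub-mean-value bound on $D(\zeta_{nj},s/\sqrt n)$ with constant of order $n/s^2$, pairwise disjointness of these disks from $2s$-separation, and the inclusion $D(\zeta_{nj},s/\sqrt n)\subset\Omega_s$ --- is exactly what that citation unpacks to. One small correction to your justification of the sub-mean-value step: the quantity $n\,|\,Q(\eta)-Q(\zeta_{nj})\,|$ is \emph{not} bounded on $D(\zeta_{nj},s/\sqrt n)$ (it is of order $\sqrt n\,|\nabla Q|\,s$); the correct comparison, as in the proof of Lemma \ref{bern}, is with $h_{\zeta_{nj}}=\re H_{\zeta_{nj}}$, the real part of the holomorphic quadratic Taylor polynomial of $Q$ at $\zeta_{nj}$, for which $n\,|\,Q-h_{\zeta_{nj}}\,|\le \Lap Q(\zeta_{nj})\,s^2+o(1)$ on the disk and $|\,p\,e^{-nH_{\zeta_{nj}}/2}\,|^{\,2}$ is genuinely subharmonic.
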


\begin{proof}
This is immediate from Lemma \ref{seppo}.
\end{proof}

\begin{thm} \label{seppo3} If $\config$ is a $\rho$-interpolating family contained in $S$, then $\config$ is uniformly separated.
\end{thm}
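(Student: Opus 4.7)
The plan is to argue by contradiction. Suppose $\config$ is not uniformly separated: then, passing to a subsequence, we find indices $j_n \ne k_n$ with
$$s_n := \sqrt{n}\,\babs{\,\zeta_{nj_n} - \zeta_{nk_n}\,} \longrightarrow 0.$$
The strategy is to exhibit a weighted polynomial $f_n$ that equals $1$ at $\zeta_{nj_n}$ and $0$ at $\zeta_{nk_n}$ but has a uniformly bounded sup-norm on a neighbourhood of $S$; the Bernstein inequality then controls how fast $\babs{\,f_n\,}$ can drop from $1$ to $0$, yielding a lower bound on $s_n$ that contradicts the assumption.

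To construct $f_n$, feed the Kronecker data $c_{ni} = \delta_{i, j_n}$ into the $\rho$-interpolating hypothesis; this gives $f_n \in \Pol_{n\rho}$ with $f_n(\zeta_{nj_n}) = 1$, $f_n(\zeta_{nk_n}) = 0$, and $\|f_n\|^{\,2} \le C/n$. A direct adaptation of Lemma~\ref{seppo} to weighted polynomials of degree $n\rho$ (the proof only requires $\rho\Lap Q$ to be bounded on a neighbourhood of $S$) yields
$$\babs{\,f_n(\zeta)\,}^{\,2} \le C'\,n \int_{D(\zeta,\, c/\sqrt n)} \babs{\,f_n\,}^{\,2}\, dA$$
for $\zeta$ in a fixed neighbourhood of $S$; combining with $\|f_n\|^{\,2} \le C/n$ gives $\|f_n\|_{L^\infty} \le M$ uniformly there. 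Applying the Bernstein inequality (Lemma~\ref{bern}, whose proof carries over to $\Pol_{n\rho}$ with the constant $\sqrt{e n\rho \Lap Q(\zeta)}$), one obtains $\babs{\,\nabla\babs{\,f_n\,}(\zeta)\,} \le C_1 \sqrt{n}$ at every $\zeta$ in this neighbourhood where $f_n(\zeta) \ne 0$.

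For the final step, parameterize the segment from $\zeta_{nk_n}$ to $\zeta_{nj_n}$ by $\gamma(t) = \zeta_{nk_n} + t(\zeta_{nj_n} - \zeta_{nk_n})$ and set $g(t) := \babs{\,f_n(\gamma(t))\,}$, so $g(0) = 0$ and $g(1) = 1$. Let $t_0$ be the largest $t \in [0,1]$ with $g(t_0) = 1/2$; on $(t_0, 1]$ the function $g$ is strictly positive and hence differentiable, with $|g'(t)| \le C_1 \sqrt{n}\,\babs{\zeta_{nj_n} - \zeta_{nk_n}}$. Integrating,
$$\tfrac{1}{2} = g(1) - g(t_0) \le C_1\, s_n,$$
which contradicts $s_n \to 0$.

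The main obstacles are bookkeeping issues at the Bernstein step. First, $\babs{\,f_n\,}$ is only differentiable away from the zero set of $f_n$, which is why one must integrate from the last $1/2$-level rather than from $0$. Second, the segment joining two nearby points of $S$ may briefly leave $S$ near a boundary point, so both the sub-mean-value estimate of Lemma~\ref{seppo} and the Bernstein inequality must be applied on a small enlargement of $S$. Both nuisances are harmless: real-analyticity of $Q$ in a neighbourhood of $S$ ensures that $\Lap Q$ is bounded there, and the Cauchy-estimate proof of Lemma~\ref{bern} goes through verbatim on such an enlargement.
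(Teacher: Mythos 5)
Your proposal is correct and follows exactly the route the paper indicates: it deduces separation from Bernstein's inequality (Lemma \ref{bern}), filling in the details that the paper delegates to \cite{AOC}, \S 3.2 --- namely, interpolating Kronecker data to get $f_n\in\Pol_{n\rho}$ with $\|f_n\|^2\le C/n$, upgrading this to a uniform sup-bound via the sub-mean-value estimate of Lemma \ref{seppo}, and integrating the gradient bound along the segment from the last $1/2$-level set. Your handling of the two technical points (non-differentiability of $\babs{f_n}$ at zeros, and the segment possibly leaving $S$) matches what the paper itself does in its proof of Theorem \ref{THAA}, so nothing further is needed.
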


\begin{proof} This follows from Bernstein's inequality (Lemma \ref{bern}). Details are found in \cite{AOC}, \textsection 3.2.
\end{proof}

\section{Sampling and interpolation families}\label{conop}

In this section, we review the method from \cite{AOC} for estimating upper and lower Beurling-Landau density for a family $\config$ at a moving point $p=p_n$, and extend its scope. The main insight is that essentially the same method used to treat the Ginibre potential in \cite{AOC} can be made to work for any \lti\, potential. To accomplish this, we must of course replace all estimates which depend on the explicit form
of the Ginibre potential; we mention that the mass-one equation plays a key role.
 A related simplification is that we here completely avoid the use of off-diagonal estimates for the correlation kernel.
 To this end, we use an observation found in \cite{LOC}, that certain off-diagonal estimates
 in Section 6 of \cite{AOC} can be replaced by lower bounds for the one-point function.

\medskip

We now introduce the setup. Observe that
the orthogonal projection $P_{n\rho}$ of $L^2$ onto $\Pol_{n\rho}$ can be written
$$P_{n\rho}\left[f\right](\zeta)=\left\langle f,\mathbf{K}_{\zeta}\right\rangle$$
(inner product in $L^2$) where $\bfK=\mathbf{K}_{n\rho}$ is the reproducing kernel for $\Pol_{n\rho}$.

Fix a number $\La\ge 1$.
The \textit{concentration operator} associated with a moving point $p=p_n\in \drop$ is defined by
\begin{equation}\label{conc}T_{n,\La}:\Pol_{n\rho}\to\Pol_{n\rho}\quad :\quad f\mapsto P_{n\rho}\left[\1_{A_n(p,\La)}\cdot f\right].\end{equation}
Here $A_n(p,\La)$ is the neighbourhood of $p_n$ defined in eq. \eqref{anz1}.

\subsection{Trace estimates} In the following, "$\Tr T$'' denotes the trace of an operator $T$ on finite dimensional space.

\begin{lem}\label{cancella} Let
$p=p_n$ be any moving point in $S$. If either $p$ is in $\Bulk S$ or
if $Q$ has the \lti-property, then
\begin{equation}\label{traces}\lim_{\La\to\infty}\lim_{n\to\infty}\frac 1 {\La^{\,2}}\Tr\left(T_{n,\La}-T_{n,\La}^{\,2}\right)= 0.\end{equation}
\end{lem}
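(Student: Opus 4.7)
The plan is to recast $\Tr(T_{n,\La}-T_{n,\La}^{\,2})$ as a ``mass-escape'' integral of the correlation kernel in rescaled coordinates, and to control it via a uniform mass-one statement for the associated Berezin kernels. Writing $\chi$ for the characteristic function of $A_n(p,\La)$, the operator $T:=T_{n,\La}$ on $\Pol_{n\rho}$ acts as $f\mapsto P_{n\rho}[\chi f]$; the standard identities $\Tr T=\int\chi\bfR_{n\rho}$ and $\Tr T^{\,2}=\iint\chi(\zeta)\chi(\eta)|\bfK_{n\rho}(\zeta,\eta)|^{\,2}\,dA(\zeta)\,dA(\eta)$, together with the reproducing property $\int|\bfK_{n\rho}(\zeta,\eta)|^{\,2}\,dA(\eta)=\bfR_{n\rho}(\zeta)$, rescale to
\[\Tr(T_{n,\La}-T_{n,\La}^{\,2})=\int_{|z|<\La}\int_{|w|>\La}|K_{n\rho}(z,w)|^{\,2}\,dA(w)\,dA(z).\]

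Next, I would introduce an auxiliary parameter $M=M(\epsilon)$ and split the $z$-integration into the boundary annulus $\{\La-M\le|z|<\La\}$ (of $dA$-area $O(M\La)$) and the interior disk $\{|z|<\La-M\}$. On the annulus, Lemma \ref{penttinen} ($R_{n\rho}\le C$) and $\int_{|w|>\La}|K_{n\rho}|^{\,2}\,dA(w)\le R_{n\rho}(z)$ yield a contribution of $O(M/\La)$ to $\La^{-2}\Tr(T-T^{\,2})$. On the interior, $|w|>\La$ forces $|w-z|>M$, so the rescaled reproducing identity $\int_\C|K_{n\rho}(z,w)|^{\,2}\,dA(w)=R_{n\rho}(z)$ gives
\[\int_{|w|>\La}|K_{n\rho}(z,w)|^{\,2}\,dA(w)\le R_{n\rho}(z)\bigl[1-\mu_{n\rho}(z,M)\bigr],\]
where $\mu_n(z,M):=\int_{|w-z|<M}B_n(z,w)\,dA(w)$ is the rescaled Berezin mass.

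The crux — and the main obstacle — is the uniform claim that for each $\epsilon>0$ one can choose a single $M=M(\epsilon)$ (independent of $\La$) such that $R_n(z)\bigl[1-\mu_n(z,M)\bigr]<\epsilon$ for every $z\in\C$ and $n\ge n_0$. I would split $z$ by its real part. For $\re z\ge a_n(p)+M_1$ with $M_1=M_1(\epsilon)$ sufficiently large, the exterior one-point estimate from \cite{AKMW} gives $R_n(z)<\epsilon$ directly. For the remaining $z$ — where by Lemma \ref{beloww} the one-point function is uniformly bounded below, so the Berezin kernel is well defined — suppose for contradiction that the bound fails along sequences $n_k\to\infty$, $z_k\in\C$, $M_k\to\infty$. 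Choosing $n_k$ to grow sufficiently rapidly, a shift of the rescaling center (tangential in the regular-boundary case, arbitrary in the bulk case) by an amount of order $|z_k|/\sqrt{n_k}\to 0$ reduces us to bounded $z_k$ and keeps us in the same regime. A locally uniformly convergent subsequence supplied by Lemma \ref{cpthm} (the Berezin kernel is cocycle-invariant) produces a nontrivial \ti\ limiting kernel $K^{m_\infty}$ — equal to the Ginibre kernel in the bulk case (Lemma \ref{gininfty}), with $m_\infty\in[0,+\infty)$ otherwise — whose mass-one equation (Lemma \ref{muck}), combined with the uniform tail estimate of Lemma \ref{uniint}, forces $\mu_{n_k}(z_k,M_k)\to 1$, the desired contradiction.

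Combining, for $n\ge n_0=n_0(\epsilon,\La)$ one has $\La^{-2}\Tr(T_{n,\La}-T_{n,\La}^{\,2})\le 2CM(\epsilon)/\La+C\epsilon$. Taking $\limsup_n$, then $\La\to\infty$, then $\epsilon\to 0$, yields the lemma; $\limsup$ may be replaced by $\lim$ since $0\le T-T^{\,2}$ as an operator on $\Pol_{n\rho}$ (because $0\le T\le I$).
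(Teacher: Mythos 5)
Your argument is correct in substance, but it is architected differently from the paper's proof. Both start from the same trace identity --- $\Tr(T_{n,\La}-T_{n,\La}^{\,2})$ is the mass of $\babs{\,K_{n\rho}(z,w)\,}^{\,2}$ escaping the rescaled disk --- and both peel off a boundary annulus plus a far-exterior region in $\re z$, reducing everything to a mass-one statement for Berezin kernels. The difference is where $n\to\infty$ is taken. The paper passes to the limit \emph{first}: for fixed $\La$ the entire trace expression converges (via Lemma \ref{cpthm}) to an integral against a single limiting kernel $K^m$ at $p$, and the uniformity in $\La$ is then supplied by the explicit model-kernel estimate of Lemma \ref{uniint} (uniform in $m$, via the Dawson-function bound). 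You instead assert a finite-$n$ uniform mass-one statement, $R_n(z)\bigl[1-\mu_n(z,M)\bigr]<\eps$ with $M=M(\eps)$ independent of $\La$, and prove it by contradiction with a re-centering (moving-point) compactness argument. That route works, and it has the advantage of needing only the qualitative mass-one equation of Lemma \ref{muck} for the single limiting kernel produced by the contradiction (the quantitative uniformity of Lemma \ref{uniint} becomes optional); but it costs two things the paper avoids: (i) a finite-$n$ exterior decay estimate ($R_n(z)<\eps$ for $\re z\ge a_n(p)+M_1$), which this paper records only in limiting form in Lemma \ref{leftlem2}, though the finite-$n$ version is indeed available from \cite{AKMW}; and (ii) care in the re-centering --- $n_k$ must grow fast enough relative to $\La_{M_k}$ and to the divergence rates of $a_n(p)$ and $n^{1/\nu}\dist(p_n,p_*)$ so that the shifted center lands in $\Bulk S\cup\d^* S$. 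Note the shifted point need not land in the \emph{same} regime as $p$, nor is the shift tangential in general; what matters is only that the limiting kernel at the new center is translation invariant (automatic in the bulk by Lemma \ref{gininfty}, and by hypothesis under the \lti-assumption), so that Lemma \ref{muck} applies. These are the same manoeuvres the paper uses to prove Lemmas \ref{below1.5} and \ref{beloww}, so your version is legitimate: it is heavier at finite $n$ where the paper is heavier at the level of the model kernels.
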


\begin{proof} Let $K_{n_k}$ be a subsequence of the kernels $K_n$ which converge locally uniformly to a limit $K$. We do not exclude that $K$ vanishes identically.
In view of Lemma \ref{muck}, we have the "mass-one equation''
$$\int_\C\babs{\,K(z,w)\,}^{\,2}\, dA(w)=R(z),$$
where $R(z)=K(z,z)$ is either everywhere positive or else vanishes identically.

Write $\1=\1_{A_n(p,\La)}$. A change of variables leads to
$$\Tr T_{n,\La}=\int_\C \1\cdot \bfR_{n}\, dA(\zeta)=\int_{D\lpar 0;\La\sqrt{\rho}\rpar}R_{n}\, dA(z)$$
and
\begin{align*}\Tr T_{n,\La}^{\,2}&=\iint_{\C^2} \babs{\,\bfK_n(\zeta,\eta)\,}^{\,2}\,\1(\zeta)\1(\eta)\, dA(\zeta)dA(\eta)\\
&=\iint_{|\,z\,|,|\,w\,|\le \La\sqrt{\rho}}\babs{K_{n}(z,w)}^{\,2}\, dA(z)dA(w).
\end{align*}
Taking the limit along the given subsequence, we obtain two cases. If $R\equiv 0$ then manifestly $\Tr(T_{n_k,\La}-T_{n_k,\La}^{\,2})\to 0$ as $k\to \infty$. If $R>0$ everywhere, then $R=R^{m}$ for some $m\in \R\cup\{+\infty\}$ where
$R^{m}(z):=F(z+\bar{z}-2m)$. If $m=\infty$, then $R=1$ and
the corresponding limiting kernel is $K=G$.

We will here concentrate on the more subtle case when $m<\infty$, leaving the details for $m=\infty$ to the reader.
Thus, we write
$$\lim_{k\to\infty}\Tr\left(T_{n_k,\La}-T_{n_k,\La}^{\,2}\right)=
\int_{|\,z\,|\le \La\sqrt{\rho}}R^{m}(z)F_\La^{m}(z)\, dA(z)$$
where
$$F_\La^{m}(z):=1-\int_{|\,w\,|\le \La\sqrt{\rho}}B^{m}(z,w)\, dA(w).$$
Here $B^m$ is the Berezin kernel corresponding to $R^m$.

Now fix $\eps$ with $0<\eps<1$. By Lemma \ref{leftlem2} we can choose $M$ large enough that, for all $\La>0$,
$$\frac 1 {\La^2}\int_{\re z>M,\,|z|\le\La\sqrt{\rho}}R^m(z)\, dA(z)<\eps.$$ Since $F_L^m\le 1$, this gives
\begin{equation}\label{mid}\frac1 {\La^2}\lim_{k\to\infty}\Tr\left(T_{n_k,\La}-T_{n_k,\La}^{\,2}\right)<
\frac 1 {\La^2}\int_{|\,z\,|\le \La\sqrt{\rho},\, \re z\le M}R^{m}(z)F_\La^{m}(z)\, dA(z)+\eps.
\end{equation}

Now note that if $|z|\le\La\sqrt{\rho}(1-\eps)$, then
$$F_L^m(z)\ge 1-\int_{|w-z|\le\La\sqrt{\rho}\eps}B^m(z,w)\, dA(w).$$
The right hand side equals $1-\mu(\La\sqrt{\rho}\eps)$ where $\mu$ is as in Lemma \ref{uniint}, so we
can choose $\La_0$ large enough that
$$F_\La^m(z)<\eps,\qquad \text{when}\quad |\,z\,|\le \La\sqrt{\rho}(1-\eps),\quad \re z\le M,\quad \La\ge \La_0.$$
This gives, if $\La\ge\La_0$
\begin{align*}\frac 1 {\La^{\,2}}\int_{|\,z\,|\le \La\sqrt{\rho},\,\re z\le M}R^mF_\La^m& \le
\frac 1 {\La^{\,2}}\left(\int_{\La\sqrt{\rho}(1-\eps)\le|\,z\,|\le \La\sqrt{\rho}}
+\int_{|\,z\,|\le \La\sqrt{\rho}(1-\eps),\, \re z\le M}F_{\La}^m\right)\\
&\le \rho[1-(1-\eps)^2]+\eps\rho(1-\eps)^2,
\end{align*}
where we used that $R^m\le 1$ and $F_\La^m\le 1$.
In view of the relation \eqref{mid},
\begin{equation}\label{vil}\frac 1 {\La^{\,2}} \lim_{k\to\infty}\Tr\left(T_{n_k,\La}-T_{n_k,\La}^{\,2}\right)<
(3\rho+1)\eps,\qquad (\La\ge \La_0).\end{equation}
We have shown that every subsequence $n_k$ has a further subsequence such that \eqref{vil} holds.
This proves the limit \eqref{traces}.
\end{proof}

The next lemma does not presuppose any translation invariance.

\begin{lem} \label{flott} Let
$p\in\Bulk S$ or $p\in\d^* S$. Then
\begin{equation}\label{tr1}\lim_{\La\to\infty}\lim_{n\to\infty}\frac 1 {\La^{\,2}} \Tr T_{n,\La}= \begin{cases} \rho, &p\in\Bulk S,\cr
\rho/2,&p\in\d^* S.
\end{cases}
\end{equation}
\end{lem}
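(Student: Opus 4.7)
The starting point is the change-of-variables identity already computed in the proof of Lemma~\ref{cancella}, namely
$$\Tr T_{n,\La} \;=\; \int_{D(0,\La\sqrt{\rho})} R_n(z)\,dA(z),$$
where $R_n$ is the one-point function rescaled about $p_n$. The plan is to move the limit in $n$ inside the integral using the locally uniform bound from Lemma~\ref{penttinen} together with the compactness statement Lemma~\ref{cpthm} (which applies to the diagonal $R_n(z)=K_n(z,z)$ since it is cocycle-invariant), and then identify the subsequential limits of $R_n$ via Lemmas~\ref{gininfty} and~\ref{leftlem2}.

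\emph{Bulk case.} If $p\in\Bulk S$, Lemma~\ref{gininfty} tells us every limiting kernel is the Ginibre kernel $G$, so every subsequential limit of $R_n$ is identically $1$. Since the full sequence $R_n$ is cocycle-invariantly bounded and every subsequence contains a further subsequence converging to $1$, we conclude $R_n\to 1$ locally uniformly. Bounded convergence then gives, for each fixed $\La$,
$$\frac{1}{\La^{2}}\int_{D(0,\La\sqrt{\rho})} R_n\,dA \;\longrightarrow\; \frac{1}{\La^{2}}\cdot \La^{2}\rho \;=\; \rho,$$
and the outer limit in $\La$ is trivially $\rho$.

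\emph{Regular boundary case.} If $p\in\d^* S$, then $\sqrt{n}\,\dist(p_n,\d S)$ is bounded and so (using that $\Lap Q$ is bounded on the droplet) $a_n(p)$ is bounded. Given a subsequence, pass to a further one along which both $a_{n_k}(p)\to a\in[0,\infty)$ and $R_{n_k}\to R$ locally uniformly; Lemma~\ref{leftlem2} then supplies the pointwise bound
$$\babs{\,R(z)-\chi_{(-\infty,a)}(x)\,}\le Ce^{-c(x-a)^{2}},\qquad x=\re z,$$
with constants $C,c$ independent of the choice of subsequence. Splitting $R$ into $\chi_{(-\infty,a)}(x)$ plus the Gaussian remainder, a direct area computation gives
$$\frac{1}{\La^{2}}\int_{D(0,\La\sqrt{\rho})} \chi_{(-\infty,a)}(x)\,dA(z) \;=\; \frac{\rho}{2} + O(1/\La),$$
since the region is a half-disk distorted by a strip of width $O(a)$ and length $O(\La)$; and Fubini together with the Gaussian tail bound shows
$$\frac{1}{\La^{2}}\int_{D(0,\La\sqrt{\rho})} Ce^{-c(x-a)^{2}}\,dA(z) \;=\; O(1/\La).$$
Hence the subsequential limit of $(1/\La^{2})\Tr T_{n_k,\La}$ equals $\rho/2+O(1/\La)$, with the same $O$-constant for every subsequence.

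The main (mild) obstacle is that, in the boundary case, the inner limit $\lim_n$ need not exist for fixed $\La$ because different subsequences can produce different limiting values of $a\in[0,\infty)$. This is resolved by the observation that all subsequential values agree up to $O(1/\La)$ uniformly, so
$$\liminf_{n}\frac{1}{\La^{2}}\Tr T_{n,\La} \;=\; \limsup_{n}\frac{1}{\La^{2}}\Tr T_{n,\La} \;+\; O(1/\La) \;=\; \frac{\rho}{2}+O(1/\La),$$
and letting $\La\to\infty$ yields the claimed value $\rho/2$. No off-diagonal information is needed; only the diagonal estimate of Lemma~\ref{leftlem2} and the cocycle-invariant compactness of $R_n$.
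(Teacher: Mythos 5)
Your proof is correct and follows essentially the same route as the paper: rewrite $\Tr T_{n,\La}$ as $\int_{D(0,\La\sqrt{\rho})}R_n\,dA$, pass to subsequences along which $a_n(p)$ and $R_n$ converge, and identify the limit via Lemma \ref{gininfty} in the bulk and the Gaussian estimate of Lemma \ref{leftlem2} at the regular boundary. Your explicit remark that distinct subsequential values of $a$ only perturb the normalized integral by $O(1/\La)$ (so the iterated limit is well defined) is a point the paper's proof passes over silently, but it is not a different argument.
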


\begin{proof}
Pick a subsequence $n_k$
such that the limit
\begin{equation}\label{costt}\lim_{n_k\to\infty}\sqrt{n_k\Lap Q(p_{n_k})}\dist(p_{n_k},\d S)
\end{equation}
exists, possibly being infinite. Let us write $a=a(n_k)$ for the limit.
Recall that, for any limiting one-point function $R=\lim R_{n_{k_l}}$ we have, by Lemma \ref{leftlem2},
\begin{equation}\label{cholm}\babs{R(z)-\1_{(-\infty,a)}(x)}\le Ce^{-c(x-a)^2},\qquad x=\re z.\end{equation}
If $p\in\bulk S$ then $a=\infty$ we have $R=1$ by Lemma \ref{gininfty}, and
$$\lim_{\La\to\infty}\frac 1 {\La^{\,2}}\lim_{n\to\infty}\Tr T_{n,\La}=
\lim_{\La\to\infty}\frac 1 {\La^{\,2}}\int_{D(0, \La\sqrt{\rho})}dA=\rho.$$

If $p\in\d^* S$ then $a<\infty$, then \eqref{cholm} gives
$$\lim_{\La\to\infty}\frac 1 {\La^{\,2}}\lim_{l\to\infty}\Tr T_{n_{k_l},\La}=
\lim_{\La\to\infty}\frac 1 {\La^{\,2}}\int_{D(0, \La\sqrt{\rho})}\chi_{(-\infty,a)}\left(\re z\right)
dA(z)=\rho/2.$$
The proof of the lemma is complete.
\end{proof}

\subsection{Lower density of $M_\rho$-families.} Let $\config=\{\config_n\}_1^\infty$ be a $2s$-separated $M_\rho$-family contained in $\drop$. Consider a moving point $p=p_n$ in $\drop$.
 We shall prove the following theorem.

\begin{thm} \label{Mclass} If $p\in\bulk S$ then $D^-(\config;p)\ge\rho$. If $p\in \d^*S$ then $D^-(\config;p)\ge \rho/2$.
\end{thm}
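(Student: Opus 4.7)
The plan is to convert the two trace estimates for the concentration operator $T_{n,\La}$ into a lower bound on $N_n(p,\La_+;\config)$ for some $\La_+$ slightly larger than $\La$, via the min--max principle.

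Let $\lambda_1 \ge \cdots \ge \lambda_{n\rho}$ be the eigenvalues of the positive self-adjoint operator $T_{n,\La}$ on $\Pol_{n\rho}$. Lemmas \ref{cancella} and \ref{flott} give, in the double limit $\lim_{\La\to\infty}\lim_{n\to\infty}$,
\begin{align*}
\sum_i \lambda_i &= \Tr T_{n,\La} = \alpha\rho\La^2 + o(\La^2), \\
\sum_i \lambda_i(1-\lambda_i) &= \Tr(T_{n,\La} - T_{n,\La}^2) = o(\La^2),
\end{align*}
with $\alpha = 1$ if $p \in \Bulk S$ and $\alpha = 1/2$ if $p \in \d^* S$. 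For any fixed $\eps \in (0,1)$, the elementary bound $\lambda_i \le \eps^{-1}\lambda_i(1-\lambda_i)$ on $\{\lambda_i \le 1-\eps\}$ gives $\sum_{\lambda_i \le 1-\eps}\lambda_i \le o(\La^2)/\eps$; letting $N_{>1-\eps}$ denote the number of eigenvalues in $(1-\eps,1]$, this yields $N_{>1-\eps} \ge \alpha\rho\La^2 - o(\La^2)/\eps - o(\La^2)$.

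Next, fix $s>0$ as in the $M_\rho$ definition, set $\La_+ := \La + s\sqrt{\Lap Q(p_n)}$, and put
$$V_n := \{f \in \Pol_{n\rho} : f(\zeta_{nj}) = 0 \text{ for every } \zeta_{nj} \in \config_n \cap A_n(p,\La_+)\},$$
so that $\dim\Pol_{n\rho} - \dim V_n \le N_n(p,\La_+;\config)$. The heart of the argument is the quadratic-form inequality $\int_{A_n(p,\La)} |v|^2 \le (1-\eps_0)\|v\|^2$ for every $v \in V_n$, with $\eps_0 > 0$ fixed. Given this, the min--max principle delivers $N_{>1-\eps_0} \le \dim\Pol_{n\rho} - \dim V_n \le N_n(p,\La_+;\config)$; combined with the previous eigenvalue estimate at $\eps = \eps_0$ this produces $N_n(p,\La_+;\config) \ge \alpha\rho\La^2 - o(\La^2)$. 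Since $\Lap Q(p_n)$ stays bounded we have $\La_+^2 = \La^2(1+o(1))$, and dividing by $\La_+^2$ and letting $n \to \infty$ then $\La \to \infty$ yields $D^-(\config,p) \ge \alpha\rho$.

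To prove the quadratic-form inequality I would chain the $M_\rho$ sampling with Lemma \ref{seppo2}. Since $v \in V_n$ vanishes on $\config_n \cap A_n(p,\La_+)$, the sampling inequality reads $\int_{S_s}|v|^2 \le (C_M/n)\sum_{\zeta_{nj} \notin A_n(p,\La_+)}|v(\zeta_{nj})|^2$. Applying Lemma \ref{seppo2} with $\Omega := S \setminus A_n(p,\La_+)$, and observing that $\Omega_s \subset S_s \setminus A_n(p,\La)$ by our choice of $\La_+$, bounds the right-hand side by $C_1 s^{-2}\int_{S_s \setminus A_n(p,\La)}|v|^2$. Rearranging yields $\int_{S_s \cap A_n(p,\La)}|v|^2 \le (1-1/C)\|v\|^2$ with $C = C_M C_1 s^{-2}$, so in the bulk case the desired inequality follows at once (with $\eps_0 = 1/C$) since $A_n(p,\La) \subset S_s$ for $n$ large. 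The main obstacle is the regular boundary case, where $A_n(p,\La)$ protrudes outside $S_s$; there I would control the exterior tail $\int_{A_n(p,\La) \setminus S_s}|v|^2$ via decay estimates for weighted polynomials outside the droplet (following from the growth of $Q$ together with Lemma \ref{damp}), possibly after enlarging $s$ in the $M_\rho$ definition, so that the tail is absorbed into the constant and the argument completes as above.
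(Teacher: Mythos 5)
Your argument is essentially the paper's own: the Chebyshev-type count of eigenvalues of $T_{n,\La}$ near $1$ via the trace estimates of Lemmas \ref{cancella} and \ref{flott}, combined with the $M_\rho$ sampling inequality chained with Lemma \ref{seppo2} to show that eigenvalues beyond index $N_n^+$ stay below $1-\eps_0$ (your min--max formulation with the subspace $V_n$ is the dual of the paper's Lemma \ref{phlegm}), and the separation bound $N_n^+-N_n=O(\La)$ to pass back to the disk of radius $\La$. Your explicit flagging of the term $\int_{A_n(p,\La)\setminus S_s}|v|^2$ in the boundary regime is a point the paper's write-up glosses over, and your proposed fix via decay of weighted polynomials of degree $n\rho$, $\rho<1$, outside the droplet is the standard one.
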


To prepare, we note that the identity
$$\left\langle T_{n,\La} f,f\right\rangle=\int_{A_n\left(p,\La\right)}\babs{\,f\,}^{\,2}$$
shows that $T_{n,\La}$ is a (strictly) positive contraction on the space $\calW_{n\rho}$ of weighted polynomials. Let $\lambda^{\,n\rho}_j=\lambda^{\,n\rho}_j(p,\La)$ denote the eigenvalues of $T_{n,\La}$, counted
with multiplicities, arranged in decreasing order;
$$1>\lambda^{\,n\rho}_1\ge \lambda^{\,n\rho}_2\ge\cdots\ge \lambda^{\,n\rho}_{n\rho}>0.$$
Let us write
$$N_n^+(p):=N_n\left(p,\La+s;\config\right)=\#\left\{\config_n\cap A_n(p,\La+s)\right\}.$$

The following lemma, as well as the proof, is borrowed from \cite{AOC}, Lemma 4.1.

\begin{lem} \label{phlegm} Suppose that $\config$ is a $2s$-separated $M_\rho$-family, $\config_n=\{\zeta_{nj}\}_{j=1}^n$. Then there is a constant $\gamma<1$ and a number $n_0$
such that whenever $p\in S$ and $n\ge n_0$, we have $\lambda^{\,n\rho}_j<\gamma$ where $j=N_n^+(p)$.
\end{lem}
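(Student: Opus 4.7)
The plan is to invoke the Courant--Fischer min-max characterisation: to prove $\lambda^{n\rho}_j<\gamma$ for $j=N_n^+(p)$, it is enough to exhibit a subspace $W\subset\Pol_{n\rho}$ of dimension at least $n\rho-N_n^+(p)+1$ on which the concentration ratio $\int_{A_n(p,\La)}|f|^2/\|f\|^2$ stays below $\gamma$. I would take $W$ to be the kernel of evaluation at all but one of the $N_n^+(p)$ configuration points of $\config_n$ lying in $A_n(p,\La+s)$; omitting one chosen point $e_*$ imposes only $N_n^+(p)-1$ linear constraints, so the dimension requirement holds automatically.

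The key is to pick $e_*$ so that a small disk $D=D(e_*,c/\sqrt n)$ about it is contained in $S_s\setminus A_n(p,\La)$. Granted that choice, for any $f\in W$ the $M_\rho$-inequality reads
\[
\int_{S_s}|f|^2\le \frac{C_1}{n}|f(e_*)|^2 + \frac{C_1}{n}\sum_{\zeta\in\config_n\setminus A_n(p,\La+s)}|f(\zeta)|^2.
\]
I would control the first term by Lemma \ref{seppo} on $D$, and the second by $2s$-separation together with Lemma \ref{seppo2} applied to the disjoint $s/\sqrt n$-disks around the exterior configuration points; both resulting integrals live inside $S_s\setminus A_n(p,\La)$, so the right-hand side collapses to $C_5\int_{S_s\setminus A_n(p,\La)}|f|^2$ with an absolute constant $C_5=C_5(s,\rho)$. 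Solving then yields $\int_{A_n(p,\La)\cap S_s}|f|^2\le(1-1/C_5)\int_{S_s}|f|^2$. Combining this with the standard exterior-decay estimate $\int_{\C\setminus S_s}|f|^2\le\eps_n\|f\|^2$ for weighted polynomials (with $\eps_n\to 0$) upgrades the bound to $\int_{A_n(p,\La)}|f|^2\le(1-1/C_5+\eps_n)\|f\|^2$; setting $\gamma:=1-1/(2C_5)$ and taking $n_0$ so large that $\eps_n<1/(2C_5)$ for $n\ge n_0$ completes the min-max argument.

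The main obstacle I foresee is the geometric selection of $e_*$, which could fail if every configuration point in $A_n(p,\La+s)$ lies so deep inside $A_n(p,\La)$ that no $c/\sqrt n$-neighborhood around any of them escapes $A_n(p,\La)$. In that degenerate case no configuration points sit in the outer annulus $A_n(p,\La+s)\setminus A_n(p,\La)$, so I would shrink the enlargement parameter (replacing $s$ by some $s'<s$) without changing $N_n^+(p)$ and apply the same construction at the new scale; alternatively I would fall back on the full evaluation kernel $\tilde W=\{f:f|_{\config_n\cap A_n(p,\La+s)}=0\}$, of codimension $N_n^+(p)$, on which the pure "Case 1" version of the above argument immediately gives concentration $\le 1-1/(C_1C_3)$, and then enlarge $\tilde W$ by one reproducing-kernel direction in its orthogonal complement, verifying that this one-dimensional enlargement raises the maximal concentration by at most the safety factor $1/(2C_5)$ built into the definition of $\gamma$.
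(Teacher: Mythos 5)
Your core mechanism is the one the paper uses: apply the $M_\rho$ inequality to weighted polynomials vanishing on $\config_n\cap A_n(p,\La+s)$, convert the surviving sum over exterior configuration points into $Cs^{-2}\int_{S_s\setminus A_n(p,\La)}\babs{f}^2$ via $2s$-separation and Lemmas \ref{seppo}--\ref{seppo2}, and read off a concentration bound $\le 1-s^2/C=:\gamma$. The only structural difference is that you run the variational principle on a subspace of small codimension (min-max), whereas the paper tests with a nonzero combination of the top $N_n^+$ eigenfunctions vanishing at the same points. That part of your argument is sound, and your explicit handling of $\int_{A_n(p,\La)\setminus S_s}\babs{f}^2$ is, if anything, more careful than the paper's.

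The genuine gap is in the dimension count, i.e.\ in passing from codimension $N_n^+$ to codimension $N_n^+-1$. Your choice of $e_*$ needs a point of $\config_n\cap A_n(p,\La+s)$ whose $c/\sqrt n$-disk lies entirely in $S_s\setminus A_n(p,\La)$, and this can genuinely fail (all the points may lie in $A_n(p,\La)$). Neither fallback repairs it: shrinking $s$ only shrinks the annulus and produces no candidate $e_*$; and the claim that enlarging $\tilde W$ by one reproducing-kernel direction raises the maximal concentration by at most $1/(2C_5)$ is false in general. Adding a single direction $g$ to a subspace can push $\max\left\langle T_{n,\La}f,f\right\rangle/\norm{f}^2$ up to essentially $\lambda^{n\rho}_1$, and in the degenerate case the only available directions in $\tilde W^\perp$ are spanned by kernels $\bfK_{n\rho}(\cdot,\zeta_{nj})$ rooted \emph{inside} $A_n(p,\La)$, whose Rayleigh quotients are close to $1$, not to $\gamma+1/(2C_5)$. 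What your Case 1 honestly yields is $\lambda^{n\rho}_{N_n^++1}\le\gamma$, one index short of the statement. Two clean repairs: (i) prove the lemma with $j=N_n^++1$; this costs nothing downstream, since the counting argument following Lemma \ref{phlegm} already carries an $O(\La)$ slack from $N_n^+-N_n\le C\La$; or (ii) switch to the dual test as in the paper, taking a nonzero $f=\sum_{j\le N_n^+}c_j\phi_j$ in the span of the top eigenfunctions vanishing on $\config_n\cap A_n(p,\La+s)$, so that $\lambda^{n\rho}_{N_n^+}\norm{f}^2\le\left\langle T_{n,\La}f,f\right\rangle\le\gamma\norm{f}^2$ directly (with the same $\pm1$ caveat about the resulting square homogeneous system, again absorbed by the $O(\La)$ slack).
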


\begin{proof} Write $N_n^+=N_n^+(p)$. By the assumed separation we have $N_n^+\le C$ for some constant $C$ independent of $n$.

Let $(\phi_j)_{1}^{n\rho}$ be an orthonormal basis for $\calW_{n\rho}$ consisting of eigenfunctions of $T_{n,\La}$ corresponding to the eigenvalues $\lambda^{\,n\rho}_j$. Fix $p\in S$
and choose constants $c_j$ (not all zero) so that the function $f=\sum_{j=1}^{N_n^+}c_j\phi_j$ satisfies $f(\zeta_{nj})=0$ for all
$\zeta_{nj}\in \config_n\cap A_n^+(p)$. This is possible for all large $n$ (say $n\ge n_0$) since $N_n^+\le C$.

Since $\config$ is $2s$-separated and of class $M_\rho$, we have
\begin{equation}\label{vice}\int_{\drop_s}\babs{\,f\,}^{\,2}\le C\frac 1 n\sum_{\zeta_{nj}\in \drop\setminus A_n(p,\La+s)}\babs{\,f(\zeta_{nj})\,}^{\,2}\le Cs^{-2}\int_{\drop_s\setminus A_n(p,\La)}
\babs{\,f\,}^{\,2},\end{equation}
where we have used Lemma \ref{seppo2} to get the last inequality.

Since $T_{n,\La}$ is the orthogonal projection of $L^2$ on $\Pol_{n\rho}$ we also have
\begin{equation}\label{roy}\sum_{j=1}^{n\rho}\lambda^{\,n\rho}_j\,\babs{\,c_j\,}^{\,2}=\left\langle T_{n,\La}f,f\right\rangle=\int_{A_n(p,\La)}\babs{\,f\,}^{\,2}.\end{equation}
By \eqref{vice} and \eqref{roy}, we find that
\begin{align*}\lambda^{\,n\rho}_{N_n^+}&\sum_{j=1}^{N_n^+}\babs{\,c_j\,}^{\,2}\le \sum_{j=1}^{n\rho}\lambda^{\,n\rho}_j\babs{\,c_j\,}^{\,2}=\left(\int_{\drop_s}-\int_{\drop_s\setminus A_n(p,\La+s)}\right)
\babs{\,f\,}^{\,2}\\
&\le\left(1-\frac {s^2} C\right)\int_{\drop_s}\babs{\,f\,}^{\,2}\le \left(1-\frac {s^2} C\right)\left\|\,f\,\right\|^{\,2}=\left(1-\frac {s^2} C\right)
\sum_{j=1}^{N_n^+}\babs{\,c_j\,}^{\,2}.
\end{align*}
This shows that $\lambda^{\,n\rho}_{N_n^+}\le \gamma$ where we may take $\gamma=1-s^2/C<1$.
\end{proof}

We now prove Theorem \ref{Mclass}. Thus fix a $2s$-separated $M_\rho$-family $\config$ and a suitable point $p=p_n\in \drop$.
As before, we let $\lambda^{\,n\rho}_j=\lambda^{\,n\rho}_j(p,\La)$ denote the eigenvalues of the concentration operator $T_{n,\La}$,
in non-increasing order.

Let $\delta_\lambda$ denote Dirac measure at $\lambda$.
Define a measure $\mu_n$ by $\mu_n=\sum_{j=1}^{n\rho}\delta_{\lambda^{\,n\rho}_j}$. Then
$$\Tr T_{n,\La}=\int_0^1x\,d\mu_n(x)\quad,\quad \Tr T_{n,\La}^{\,2}=\int_0^1 x^2\, d\mu_n(x).$$
Let $\gamma$ and $n_0$ be as in Lemma \ref{phlegm}. Then for $\gamma\ge n_0$,
\begin{align*}\#\left\{j;\, \lambda^{\,n\rho}_j>\gamma\right\}&=\int_\gamma^1 d\mu_n(x)\ge\int_0^1 x\, d\mu_n(x)-\frac 1 {1-\gamma}\int_0^1 x(1-x)\, d\mu_n(x)\\
 &=\Tr T_{n,\La}-\frac 1 {1-\gamma}\Tr\left(T_{n,\La}-T_{n,\La}^{\,2}\right).
\end{align*}
Now write $N_n$ for the number of points in the slightly smaller disk,
$N_n=N_n(p,\La;\config)$. (Cf. \eqref{anz1}.)
Note that the $2s$-separation of $\config$ implies that
$$N_n^+-N_n\le C\La$$ where $C$ is a constant depending on $s$. By Lemma \ref{phlegm} we thus have
$$N_n\ge \#\left\{j;\,\lambda^{\,n\rho}_j\ge\gamma\right\}+O(\La),$$
where the $O$-constant is independent of $n$. This gives that
\begin{align*}\liminf_{n\to\infty}&\frac {\#\left\{\config_n\cap A_n(p,\La)\right\}} {\La^{\,2}}\ge\liminf_{n\to\infty}\frac {\#\left\{j;\,\lambda^{\,n\rho}_j\ge\gamma\right\}+O(\La)}
{\La^{\,2}}\\
&\ge\liminf_{n\to\infty}\left[\frac {\Tr T_{n,\La}} {\La^{\,2}}-\frac 1 {1-\gamma}\frac {\Tr \left(T_{n,\La}-T_{n,\La}^{\,2}\right)} {\La^{\,2}}\right]+O(1/\La).
\end{align*}
By the trace estimates in Lemma \ref{flott}, we conclude that $D^-(\config,p)\ge \rho$ if $p\in \Bulk S$, whereas $D^-(\config,p)\ge \rho/2$ if $p\in\d^* S$. $\qed$

\subsection{Upper density of interpolating families} Now suppose that $\config$ is a $\rho$-interpolating family contained in $\drop$. Also fix a moving point $p=p_n$ in $\drop$.

\begin{thm}\label{inter} If $p$ is in the bulk, then $D^+(\config;p)\le \rho$. If $p$ is in the regular boundary regime, then $D^+(\config;p)\le \rho/2$.
\end{thm}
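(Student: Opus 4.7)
The plan is to parallel the argument for Theorem \ref{Mclass}, but with the roles of sampling and interpolation dualized. The central observation is that the $\rho$-interpolating condition says exactly that the evaluation map
$$E_n\colon \Pol_{n\rho}\to \C^n,\qquad E_n f=(f(\zeta_{nj}))_{j=1}^n,$$
is surjective with uniformly bounded right inverse, where $\C^n$ carries the normalized inner product $\langle c,c'\rangle_n=\tfrac 1 n \sum_j c_j\overline{c_j'}$. A standard Hilbert space computation then shows that $V_n:=(\ker E_n)^\perp$ has dimension $n$, and that on $V_n$ one has a \emph{sampling inequality}
$$\delta\,\|\,f\,\|^{\,2} \le \frac 1 n\sum_{j=1}^n \babs{\,f(\zeta_{nj})\,}^{\,2},\qquad f\in V_n,$$
with $\delta>0$ depending only on the interpolation constant $C(\rho)$. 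Converting interpolation (existence) into sampling on a subspace (uniqueness with norm control on $V_n$) is what substitutes here for the $M_\rho$-property used in Section 4.

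Using this, I would fix the separation constant $s$ for $\config$ (provided by Theorem \ref{seppo3}) and put $N_n^-:=N_n(p,\La-s;\config)$. Let $V_n'$ denote the subspace of $V_n$ consisting of $f$ with $f(\zeta_{nj})=0$ whenever $\zeta_{nj}\notin A_n(p,\La-s)$; since $E_n$ restricts to a bijection on $V_n$, $\dim V_n'=N_n^-$. For $f\in V_n'$, Lemma \ref{seppo} gives $\frac 1 n \babs{\,f(\zeta_{nj})\,}^{\,2}\le C\int_{D(\zeta_{nj},c/\sqrt n)}\babs{\,f\,}^{\,2}$. Since the disks $D(\zeta_{nj},c/\sqrt n)$ with $\zeta_{nj}\in A_n(p,\La-s)$ are disjoint (by $2s$-separation) and contained in $A_n(p,\La)$ for an appropriate $c\le s$, summing and combining with the sampling inequality yields
$$\langle T_{n,\La}f,f\rangle =\int_{A_n(p,\La)}\babs{\,f\,}^{\,2}\ge \eta\,\|\,f\,\|^{\,2},\qquad f\in V_n',$$
for some $\eta>0$ independent of $n$, $\La$, $p$. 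By the min-max principle, $\lambda^{\,n\rho}_{N_n^-}(T_{n,\La})\ge \eta$.

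To turn this eigenvalue information into a count, I would set $M_n:=\#\{j:\lambda^{\,n\rho}_j\ge\eta\}$, so that $N_n^-\le M_n$. The elementary inequality
$$\Tr\lpar T_{n,\La}-T_{n,\La}^{\,2}\rpar=\sum_j \lambda_j(1-\lambda_j)\ge \eta\sum_{\lambda_j\ge\eta}(1-\lambda_j)$$
rearranges to $M_n\le \Tr T_{n,\La}+\eta^{-1}\Tr(T_{n,\La}-T_{n,\La}^{\,2})$. Dividing by $\La^{\,2}$ and taking $\limsup_{n\to\infty}$ then $\limsup_{\La\to\infty}$, Lemma \ref{flott} contributes $\rho$ in the bulk case and $\rho/2$ in the regular boundary case to the first term, while Lemma \ref{cancella} kills the second (the \lti-property enters here when $p\in\d^* S$). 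Since $2s$-separation forces $N_n\le N_n^-+O(\La)$, the same bound transfers to $D^+(\config,p)$.

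The only step requiring genuine care is the first: extracting the sampling inequality on $V_n$ from the mere surjectivity of $E_n$. Once the Hilbert-space identity $E_nE_n^*\ge \delta I$ is in place, the remainder is a direct combination of Lemma \ref{seppo}, the already-proved trace estimates, and a standard eigenvalue-counting argument. It is worth noting that, unlike \cite{AOC}, no off-diagonal estimates for $\bfK_n$ are needed; the lower bounds for the one-point function from Lemmas \ref{below1.5}--\ref{beloww} enter only through their role in the proof of Lemmas \ref{cancella} and \ref{flott}.
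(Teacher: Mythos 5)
Your proposal is correct and follows essentially the same route as the paper: produce an $N_n^-$-dimensional subspace of $\Pol_{n\rho}$ on which $\langle T_{n,\La}f,f\rangle\ge\eta\|f\|^{\,2}$, invoke Fischer's min--max principle to get $\lambda^{\,n\rho}_{N_n^-}\ge\eta$, count eigenvalues above $\eta$ via $\Tr T_{n,\La}+\eta^{-1}\Tr(T_{n,\La}-T_{n,\La}^{\,2})$, and finish with Lemmas \ref{cancella} and \ref{flott} together with the separation bound $N_n\le N_n^-+O(\La)$. The only (harmless) difference is cosmetic: the paper builds the subspace as the span of explicit dual interpolants $f_{nj}$ and bounds $\|f\|^{\,2}$ by Cauchy--Schwarz, whereas you take $V_n'\subset(\ker E_n)^\perp$, which is the same subspace when the $f_{nj}$ are chosen of minimal norm, and your sampling constant $\delta$ is visibly independent of $\La$.
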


Recall that an interpolating family is uniformly
separated (Theorem \ref{seppo3}). Let $2s$ be a separation constant of $\config$, where $s<\La$.
(Here $\La$ is a fixed parameter with $\La\ge 1$.) We will denote the number of points of $\config_n$ in the disk $A_n(p,\La-s)$ by
$$N_n^-=N_n^-(p):=N_n(p,\La-s,\config).$$
It is easy to see that $N_n^-\le M$ for some constant $M$ depending only on $s$.

We now order the points in $\config_n=\{\zeta_{n1},\ldots,\zeta_{nn}\}$ so that the first $N_n^-$ elements, i.e., the points
 $\zeta_{n1},\ldots,\zeta_{nN_n^-}$, are the elements of $\config_n\cap A_n(p,\La-s)$. Since $\config$ is interpolating we can find functions $f_{nj}\in\Pol_{n\rho}$
such that $f_{nj}(\zeta_{jk})=\delta_{jk}$ and $\left\|\,f_{nj}\,\right\|^{\,2}\le C/n$. By repeated use of the Cauchy--Schwarz inequality, the function $f=\sum_{j=1}^{N_n^-}c_jf_{nj}$
then satisfies
\begin{align*}\left\|\,f\,\right\|^{\,2}&\le\sum_{j,k=1}^{N_n^-}|\,c_j\,||\,c_k\,|\cdot
\babs{\left\langle f_{nj},f_{nk}\right\rangle}\le \frac C n\sum_{j,k=1}^{N_n^-}|\,c_j\,||\,c_k\,|\\
&\le C\frac 1 n\sum_{j=1}^{N_n^-}|\,c_j\,|\sqrt{N_n^-}\sqrt{\sum_{k=1}^{N_n^-}|\,c_k\,|^{\,2}}\le CN_n^-\frac 1 n \sum_{j=1}^{N_n^-}|\,c_j\,|^{\,2}\le CM\frac 1 n \sum_{j=1}^{N_n^-}|\,c_j\,|^{\,2}.
\end{align*}
Since $f(\zeta_{nj})=c_{nj}$, Lemma \ref{seppo2} implies the estimate
\begin{equation}\label{impe}\left\|\,f\,\right\|^{\,2}\le C'\int_{A_n(p,\La)}\babs{\,f\,}^{\,2}=C'\left\langle T_{n,\La}f,f\right\rangle
\end{equation}
where $T_{n,\La}$ is concentration operator, $C'$ is a constant independent of $n$.

The function $f=\sum c_{nj}f_{nj}$ used above belongs to the following linear
span
$$F_{N_n^-}:=\operatorname{span}\left\{f_{nj};\, j=1,\ldots,N_n^-\right\}\subset\Pol_{n\rho}.$$
By \eqref{impe} we have, with $\delta=1/C'>0$,
\begin{equation}\label{fischer}\left\langle T_{n,\La}f,f\right\rangle\ge \delta\left\|\,f\,\right\|^{\,2},\qquad f\in F_{N_n^-}.\end{equation}

As before, denote by $\lambda^{\,n\rho}_j$ the eigenvalues of the operator $T_{n,\La}$ on $\Pol_{n\rho}$, ordered in decreasing order.
By Fischer's principle (see \cite{L}, p. 319) we have the estimate
$$\lambda^{\,n\rho}_j=\max_{U_j}\min_{f\in U_j}\frac {\left\langle T_{n,\La} f,f\right\rangle}
{\left\langle f,f\right\rangle},$$
where $U_j$ ranges over the $j$-dimensional subspaces of $\Pol_{n\rho}$. Since the space $F_{N_n^-}$ has dimension $N_n^-$, the estimate \eqref{fischer}
shows that $\lambda^{\,n\rho}{N_n^-}\ge\delta$. Hence
\begin{equation}\label{prece}N_n^-\le\#\left\{j;\, \lambda^{\,n\rho}_j\ge\delta\right\}.\end{equation}
By the separation of $\config$ we have the estimate
$$N_n-N_n^-\le C\La.$$ The inequality \eqref{prece} therefore implies
\begin{equation}\label{price}N_n\le\#\left\{j;\, \lambda^{\,n\rho}_j\ge\delta\right\}+O(\La).\end{equation}
Consider again the measure $\mu_n=\sum_{j=1}^n\delta_{\lambda^{\,n\rho}_j}$; we this time use the estimate
$$\#\left\{j;\,\lambda^{\,n\rho}_j\ge\delta\right\}=\int_\delta^1 d\mu_n(x)\le\int_0^1x\, d\mu_n(x)+\frac 1 \delta\int_0^1 x(1-x)\, d\mu_n(x)$$
which is valid for $0<\delta<1$. Applying this estimate to \eqref{price}, we find that
\begin{align*}\#\left\{\config_n\cap A_n(p,\La)\right\}&=N_n\le \Tr T_{n,\La}+\frac 1 \delta\left[\Tr\left(T_{n,\La}-T_{n,\La}^{\,2}\right)\right]+O(\La).\end{align*}
The trace estimates in Lemma \ref{flott} now give that
$$\limsup_{n\to\infty}\frac {\#\left\{\config_n\cap A_n(p,\La)\right\}}
{\La^{\,2}}\le \begin{cases} \rho+O(1/\La) & p\in\bulk S\cr
\rho/2+O(1/\La)&p\in\d^*S\cr
\end{cases}.$$
Taking the $\limsup$ as $\La\to\infty$, we see that
$D^+(\config;p)\le \rho$ if $p$ is in the bulk while $D^+(\config;p)\le \rho/2$ if $p$ is in the regular boundary regime.
The proof of Theorem \ref{inter} is complete. $\qed$

\section{The density theorem}

In this section, we consider a family $\calF=\{\calF_n\}$ of Fekete sets.
We shall prove the density theorem (theorems \ref{THAA}-\ref{THB}).
The proofs of theorems \ref{THAAA} and \ref{THA} are slightly more transparent
when the boundary $\d S$ is everywhere regular. For that reason, we will first give an argument for the regular case, and then give the modifications needed in the presence of singular boundary points.

\subsection{Proof of Theorem \ref{THAA}} Let $\calF_n=\{\zeta_{n1},\ldots,\zeta_{nn}\}$ be an $n$-Fekete set. We define
weighted Lagrangian interpolation polynomials $\ell_{nj}\in\Pol_n$ by
$$\ell_{nj}(\zeta)=\lpar\prod_{i\ne j}(\zeta-\zeta_{ni})/\prod_{i\ne j}(\zeta_{nj}-\zeta_{ni})\rpar\cdot e^{-n(Q(\zeta)-Q(\zeta_{nj}))/2}.$$
Observe that $\ell_{nj}(\zeta_{nk})=\delta_{jk}$
and
$$\babs{\,\ell_{nj}(\zeta)\,}^{\,2}\le 1,\qquad \zeta\in \C.$$
Hence, by Lemma \ref{bern}
\begin{equation}\label{stopp}\babs{\,\nabla\lpar\babs{\,\ell_{nj}\,}\rpar(\zeta)\,}\le \sqrt{e\,n\,\Lap Q(\zeta)}(1+o(1)),\qquad \zeta\in \nbh\setminus \calF_n,\end{equation}
where $\nbh$ is a suitable neighbourhood of $S$. (It is easy to see that the proof of Lemma \ref{bern} goes through
also when $\zeta\in\nbh\setminus S$.)

Fix a point $\zeta_{nj}\in\calF_n$ and assume that another point $\zeta_{nk}$ is sufficiently close to $\zeta_{nj}$, say $\babs{\zeta_{nj}-\zeta_{nk}}\le C/\sqrt{n}$ where $C$ is some large constant. (If there are no such $\zeta_{nk}$, there is nothing to prove.)

Integrating \eqref{stopp}
with respect to arclength over the line-segment $\gamma$ joining $\zeta_{nj}$ to $\zeta_{nk}$ (or perhaps a slightly perturbed curve to avoid other points $\zeta_{nj}$ on that segment) we find that
\begin{equation}\label{sto2}\begin{split}1=\babs{\,\babs{\,\ell_{nj}(\zeta_{nj})\,}-\babs{\,\ell_{nj}(\zeta_{nk})\,}\,}&\le (1+o(1))\int_\gamma
\sqrt{e n\Lap Q(\zeta)}\, |d\zeta|\\
&\le\sqrt{en\Lap Q(\zeta_{nj})}(1+o(1))\babs{\,\zeta_{nj}-\zeta_{nk}\,},\\
\end{split}
\end{equation}
where we used continuity of $\Lap Q$ to replace the supremum over $\gamma$ by $\Lap Q(\zeta_{nj})$.

The estimate \eqref{sto2} means that
$$d_{n}(\zeta_{nj}):=\sqrt{n\Lap Q(\zeta_{nj})}\min_{k\ne j}\babs{\,\zeta_{nk}-\zeta_{nj}\,}$$
satisfies $d_n(\zeta_{nj})\ge (1+o(1))/\sqrt{e}$ for all $j$. Hence
$$\Delta(\calF)=\liminf_{n\to\infty}\min_{j=1,\ldots,n}d_n(\zeta_{nj})\ge 1/\sqrt{e}.$$
The proof of Theorem \ref{THAA} is complete. q.e.d.

\subsection{Fekete sets and interpolating families; the regular case}\label{irc}

\begin{thm} \label{ipo} Let $\calF=\{\calF_n\}$ be a family of Fekete sets. If the boundary of the droplet is everywhere regular, then $\calF$ is $\rho$-interpolating
for each $\rho>1$.
\end{thm}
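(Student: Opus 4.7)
The plan is to construct, for each Fekete point $\zeta_{nj}$, a Kronecker-delta function $f_{nj}\in\Pol_{n\rho}$ that is concentrated near $\zeta_{nj}$, and then take $f_n:=\sum_j c_{nj}f_{nj}$. Recall from the proof of Theorem~\ref{THAA} the weighted Lagrange polynomials
\[
\ell_{nj}(\zeta)=\frac{\prod_{i\ne j}(\zeta-\zeta_{ni})}{\prod_{i\ne j}(\zeta_{nj}-\zeta_{ni})}\,e^{-n(Q(\zeta)-Q(\zeta_{nj}))/2}\in\Pol_n.
\]
They satisfy $\ell_{nj}(\zeta_{nk})=\delta_{jk}$ and, by the Fekete extremality together with Lemma~\ref{damp}, $|\ell_{nj}|\le 1$ on all of $\C$. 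Now spend the extra $(\rho-1)n$ degrees of freedom by setting $m:=(\rho-1)n$ and
\[
\chi_{nj}(\zeta):=\frac{\bfK_m(\zeta,\zeta_{nj})}{\bfR_m(\zeta_{nj})}\in\Pol_m,
\]
where $\bfK_m$ is the reproducing kernel of $\Pol_m$. Then $\chi_{nj}(\zeta_{nj})=1$ and $\|\chi_{nj}\|^2=1/\bfR_m(\zeta_{nj})$. Since $\d S$ is everywhere regular, the distance-to-singular-points hypothesis of Lemma~\ref{beloww} is vacuous, so that lemma gives $\bfR_m(\zeta_{nj})\ge cm$ for $n$ large, whence $\|\chi_{nj}\|^2\le C/n$.

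Put $f_{nj}:=\ell_{nj}\chi_{nj}$. A degree-and-weight count shows $f_{nj}\in\Pol_{n+m}=\Pol_{n\rho}$, and evidently $f_{nj}(\zeta_{nk})=\delta_{jk}$, so $f_n:=\sum_j c_{nj}f_{nj}$ interpolates the prescribed data. Using $|\ell_{nj}|\le 1$ pointwise,
\[
\|f_n\|^2\le\sum_{j,k}|c_{nj}||c_{nk}|\int_\C|\chi_{nj}\chi_{nk}|\,dA=:\sum_{j,k}|c_{nj}||c_{nk}|\,A_{jk}.
\]
By symmetry of $A_{jk}$ and Schur's test, the required bound $\|f_n\|^2\le (C/n)\sum_j|c_{nj}|^2$ reduces to the uniform estimate $\sup_j\sum_k A_{jk}\le C/n$.

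The main obstacle is thus this Schur-type bound, which in turn comes from Gaussian off-diagonal decay of $\bfK_m$ on the scale $1/\sqrt m$:
\[
|\bfK_m(\zeta,w)|\le C\sqrt{\bfR_m(\zeta)\bfR_m(w)}\,e^{-cm|\zeta-w|^2}
\]
uniformly on the droplet. In the bulk this is a standard consequence of the Ginibre limit $K=G$ (Lemma~\ref{gininfty}) with its explicit Gaussian factor; near the regular boundary, the rescaling of Section~\ref{rescal} and Lemma~\ref{cpthm} produce the error-function kernels $K^m$ of \eqref{ka}, which again carry the Gaussian of $G$, and a normal-families/compactness transfer argument as in \cite{AKM,AKMW}, combined with Lemma~\ref{penttinen} for uniform upper bounds on $\bfR_m$, yields the above pointwise estimate on the finite-$m$ kernel. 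Inserting this into $A_{jk}$, a direct Gaussian integration gives $A_{jk}\le(C/m)\exp(-c'm|\zeta_{nj}-\zeta_{nk}|^2)$. Finally, the Fekete separation $|\zeta_{nj}-\zeta_{nk}|\ge c''/\sqrt n$ from Theorem~\ref{THAA} makes $\sum_k\exp(-c'n|\zeta_{nj}-\zeta_{nk}|^2)$ uniformly bounded by comparison with a Gaussian integral, producing $\sum_k A_{jk}\le C/n$ as required.
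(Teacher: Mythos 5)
Your overall architecture (Lagrange polynomials $\ell_{nj}$ times a normalized reproducing-kernel factor, with Lemma \ref{beloww} supplying the lower bound $\bfR_m(\zeta_{nj})\ge cm$) is the right one, and the degree count and the interpolation property $f_{nj}(\zeta_{nk})=\delta_{jk}$ are fine. The gap is in the key analytic input. You take the \emph{first} power $\chi_{nj}=\bfK_m(\cdot,\zeta_{nj})/\bfR_m(\zeta_{nj})$ and then reduce the $\ell^2\to L^2$ bound to a Schur test, which forces you to assert the off-diagonal estimate $|\bfK_m(\zeta,w)|\le C\sqrt{\bfR_m(\zeta)\bfR_m(w)}\,e^{-cm|\zeta-w|^2}$ "uniformly on the droplet.'' That estimate is false near the boundary, which is exactly the regime this theorem must handle. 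The limiting boundary kernels are $K^{m}(z,w)=G(z,w)F(z+\bar w-2m)$, and the bound actually available (quoted in the proof of Lemma \ref{uniint} from \cite{AOC}, Lemma 8.5) is $|K^{m}(z,w)|\le e^{-|z-w|^2/2}+e^{-[\re(z-w)]^2/2}H(\im(z-w))$ with $|H(t)|\le C(1+|t|)^{-1}$: the decay in the direction tangent to the boundary is only of order $1/|\im(z-w)|$, not Gaussian. With that true decay, the Schur sum $\sup_j\sum_k A_{jk}$ over the $O(\sqrt n)$ separated Fekete points lying within distance $O(1/\sqrt n)$ of the boundary picks up logarithmic divergences, so the argument does not close; and no finite-$m$ Gaussian off-diagonal bound is proved (or even usable) in this paper.

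The paper's proof sidesteps off-diagonal estimates entirely — this is flagged in Section \ref{conop} as the simplification borrowed from \cite{LOC}. One squares the kernel factor, setting
\begin{equation*}
L_{nj}(\zeta)=\left(\frac{\bfK_{\eps n}(\zeta,\zeta_{nj})}{\bfR_{\eps n}(\zeta_{nj})}\right)^{2}\ell_{nj}(\zeta)\in\Pol_{(1+2\eps)n}.
\end{equation*}
Then the reproducing identity $\int|\bfK_{\eps n}(\zeta,\zeta_{nj})|^2\,dA(\zeta)=\bfR_{\eps n}(\zeta_{nj})$ together with $\bfR_{\eps n}(\zeta_{nj})\ge c\eps n$ gives $\|L_{nj}\|_{L^1}\le C/n$, while the separation of Fekete points (Theorem \ref{THAA}) plus Lemma \ref{seppo2} gives $\sum_j|L_{nj}(\zeta)|\le \frac{C}{n^2}\int|\bfK_{\eps n}(\zeta,\eta)|^2\,dA(\eta)\le C'$, i.e. an $\ell^\infty\to L^\infty$ bound. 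Riesz--Thorin then yields $\|T\|_{\ell^2_n\to L^2}\le C/\sqrt n$ with no pointwise decay needed anywhere. If you want to salvage your version, replace your $\chi_{nj}$ by its square (adjusting $m=(\rho-1)n$ to $2\cdot\eps n$) and trade the Schur test for this $L^1$--$L^\infty$ interpolation.
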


\begin{proof}
For a fixed $\eps>0$ we define weighted polynomials $L_{nj}\in\Pol_{(1+2\eps)n}$ by
\begin{equation}\label{ulf}L_{nj}(\zeta)=\lpar \frac {\mathbf{K}_{\eps n}(\zeta,\zeta_{nj})}
{\mathbf{R}_{\eps n}(\zeta_{nj})}\rpar^2\cdot \ell_{nj}(\zeta).\end{equation}
Observe that $L_{nj}(\zeta_{jk})=\delta_{jk}$.

Since $\d S$ is everywhere regular, we can use
Lemma \ref{beloww} and the inclusion $\calF_n\subset \drop$ to assert the existence of $c>0$ such that
\begin{equation}\label{ue}\bfR_{\eps n}(\zeta_{nj})\ge c\eps n.\end{equation}
Hence
\begin{equation}\label{ulv}
\sum_{j=1}^\infty \babs{\,L_{nj}(\zeta)\,}\le \frac C {n^2}\sum_{j=1}^n \babs{\,\mathbf{K}_{\eps n}(\zeta,\zeta_{nj})\,}^{\,2}\le
\frac C {n^2}\int \babs{\,\mathbf{K}_{\eps n}(\zeta,\eta)\,}^{\,2}~dA(\eta)\le \frac {C'} n,\end{equation}
where we have used \eqref{ulf} together with Theorem \ref{THAA}
and Lemma \ref{seppo2}.

Note that, since $\babs{\,\ell_{nj}\,}\le 1$, the estimate \eqref{ue} and the reproducing property gives
\begin{equation}\label{upp}\norm{L_j}_{L^1}\le \frac 1 {c\eps n}\int_\C \frac {\babs{\,\bfK_{\eps n}(\zeta,\zeta_{nj})\,}^{\,2}} {\bfR_{\eps n}(\zeta_{nj})}\, dA(\zeta)=\frac C n\end{equation}
where $C=1/c\eps$.

Now define $T:\C^n\to L^1+L^\infty$ by $T(c)=\sum_{j=1}^n c_jL_{nj}$.
By \eqref{upp} we have
$$\norm{\,T\,}_{\ell^1_n\to L^1}\le \sup_j \left\|\,L_{nj}\,\right\|_{L^1}\le \frac C n.$$
Moreover, with $F_n(\zeta)=\sum\babs{\,L_{nj}(\zeta)\,}$, we have by \eqref{ulv}
$$\norm{\,T\,}_{\ell^\infty_n\to L^\infty}\le \left\|\,F_n\,\right\|_{L^\infty}\le C.$$
By the Riesz-Thorin theorem,
$$\norm{\,T\,}_{\ell^2_n\to L^2}\le \frac C {\sqrt{n}}.$$
Hence if $f=T(c)$ then $f\in \Pol_{n(1+2\eps)}$, $f(\zeta_{nj})=c_j$ and
$$\int\babs{\,f\,}^{\,2}\le C\frac 1 n \sum_{j=1}^n \babs{\,f(\zeta_{nj})\,}^{\,2}.$$
We have shown that $\mathcal{F}$ is $(1+2\eps)$-interpolating.
\end{proof}

\subsection{Fekete sets and $M$-families; the regular case}\label{mrc}

\begin{thm} \label{mro} Assume that the boundary of $S$ be everywhere regular and
let $\calF=\{\calF_n\}$ be a family of Fekete sets. Then $\calF$ is an $M_\rho$-family for each $\rho<1$.
\end{thm}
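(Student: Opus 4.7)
The plan is to dualize the construction in Theorem \ref{ipo}. Fix $\rho<1$, pick $\eps>0$ with $\rho+2\eps<1$, and let $s>0$ be the separation constant provided by Theorem \ref{THAA}; this $s$ will serve as the separation constant in the $M_\rho$-family definition. What remains is to show
$$\int_{S_s}|f|^2\,dA\le C\,n^{-1}\sum_{j=1}^n|f(\zeta_{nj})|^2,\qquad f\in\Pol_{n\rho}.$$
Applying Theorem \ref{ipo} to the data $c_j=f(\zeta_{nj})$ yields a quasi-reconstruction
$$g := \sum_{j=1}^n f(\zeta_{nj})\,L_{nj} \in \Pol_{n(1+2\eps)},$$
where the $L_{nj}$ are the Lagrange-type weighted polynomials built in that proof out of $\ell_{nj}$ and $\bfK_{\eps n}$. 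Then $g(\zeta_{nj})=f(\zeta_{nj})$ for each $j$, and $\|g\|_{L^2}^2 \le Cn^{-1}\sum_j|f(\zeta_{nj})|^2$. Theorem \ref{mro} thus reduces to the quantitative comparison $\|f\|_{L^2(S_s)}\le C'\|g\|_{L^2}$.

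To establish this comparison, observe that since the polynomial part of $f\in\Pol_{n\rho}$ has degree strictly less than $n\rho<n$ and $\calF_n$ consists of $n$ distinct points, $\calF_n$ is already a uniqueness set for $\Pol_{n\rho}$; so $f$ is determined algebraically by its sample values and $f-g$ vanishes identically on $\calF_n$. To make the uniqueness quantitative, and following the observation from \cite{LOC}, I would avoid the off-diagonal reproducing-kernel bounds used in \cite{AOC} for the explicit Ginibre potential, and instead rely on the pointwise lower bound $\bfR_{\eps n}(\zeta_{nj})\ge c\eps n$ provided by Lemma \ref{beloww}. Combining this with Bernstein's inequality (Lemma \ref{bern}), the weighted sub-mean value estimate (Lemma \ref{seppo}), and the separation of $\calF$, one obtains $L^2$-control of $f-g$ on small disks around each Fekete point; a covering argument over $S_s$ then delivers $\|f-g\|_{L^2(S_s)}\le\eta\,\|g\|_{L^2}$ with $\eta$ arbitrarily small, whence the desired comparison.

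The main obstacle is precisely this quantitative comparison $\|f\|_{L^2(S_s)}\le C'\|g\|_{L^2}$. The separation, the construction of $g$, and the covering assembly are all essentially standard given Theorems \ref{THAA} and \ref{ipo} together with the weighted-polynomial estimates collected in Section 2; the crux is verifying that the pointwise lower bound on the one-point function can successfully substitute for the off-diagonal decay of the correlation kernel. This is exactly the simplification from \cite{LOC} announced in the opening paragraphs of Section \ref{conop}, and checking that it goes through for a general (regular-boundary) potential satisfying the standing assumptions of Section \ref{ptd}, rather than just for the Ginibre potential, is the core technical step of the proof.
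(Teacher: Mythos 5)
Your reduction to the comparison $\|f\|_{L^2(S_s)}\le C'\|g\|_{L^2}$ is legitimate, but the argument you sketch for that comparison is where the proof breaks down, and the gap is not a technicality. The only information you extract about $h:=f-g$ is that it vanishes on $\calF_n$. Since $h\in\Pol_{n(1+2\eps)}$ and $\dim\Pol_{n(1+2\eps)}>n=\#\calF_n$, vanishing on $\calF_n$ forces nothing by itself; to convert it into a bound $\|h\|_{L^2(S_s)}\le\eta\|g\|$ you would need a quantitative sampling statement for $\calF_n$ acting on weighted polynomials of degree $\approx n$ --- which is essentially the $M_\rho$ property you are trying to prove. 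Concretely, the Bernstein-plus-covering step fails: Lemma \ref{bern} controls $h$ only on disks of radius $c/\sqrt{n\Lap Q}$ centred at the Fekete points, and in order for those disks to cover $S_s$ (with $c$ small enough that the resulting self-referential estimate on $\|h\|_{L^\infty}$ is not vacuous) you must already know that the Fekete set has no gaps at scale $O(1/\sqrt{n})$, i.e. a density lower bound. The argument is circular.

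The paper avoids any perturbative comparison of $f$ with a reconstruction. Instead, for $f\in\Pol_{n(1-2\eps)}$ it forms, for each fixed $\zeta$, the auxiliary function $g_\zeta(\eta)=f(\eta)\lpar\bfK_{n\eps}(\eta,\zeta)/\bfR_{n\eps}(\zeta)\rpar^2$, which lies in $\Pol_n$ as a function of $\eta$; Lagrange interpolation at the $n$ Fekete points is therefore \emph{exact} on $g_\zeta$, and evaluating the resulting identity at $\eta=\zeta$ (where the normalized kernel factor equals $1$) yields the exact reproducing formula $f(\zeta)=\sum_j f(\zeta_{nj})\tilde L_{nj}(\zeta)$ with $\tilde L_{nj}(\zeta)=\lpar\bfK_{n\eps}(\zeta_{nj},\zeta)/\bfR_{n\eps}(\zeta)\rpar^2\ell_{nj}(\zeta)$. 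This diagonal-evaluation trick --- the degree deficit $2\eps n$ of $f$ is exactly what the squared normalized kernel consumes --- is the idea your proposal is missing. Once the identity is in hand, the conclusion follows from the $\ell^1_n\to L^1(S_s)$ and $\ell^\infty_n\to L^\infty(S_s)$ bounds for $c\mapsto\sum_j c_j\tilde L_{nj}$ (using Lemma \ref{beloww}, the reproducing property, Theorem \ref{THAA} and Lemma \ref{seppo2}) and Riesz--Thorin, mirroring the proof of Theorem \ref{ipo}. You correctly identified that the lower bound of Lemma \ref{beloww} replaces the off-diagonal kernel decay of \cite{AOC}, but that substitution operates inside this exact identity, not inside a comparison of $f$ with $g=\sum_j f(\zeta_{nj})L_{nj}$.
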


\begin{proof}
Fix a function $f\in \Pol_{n(1-2\eps)}$ and put
$$g_\zeta(\eta)=f(\eta)\cdot \lpar \frac {\mathbf{K}_{n\eps}(\eta,\zeta)}
{\mathbf{R}_{n\eps}(\zeta)}\rpar^2.$$
By Lagrange interpolation,
$$g_\zeta(\eta)=\sum_{j=1}^n g_\zeta(\zeta_{nj})\ell_{nj}(\eta).$$
Hence if we put
$$\tilde{L}_{nj}(\zeta)=\lpar \frac {\mathbf{K}_{n\eps}(\zeta_{nj},\zeta)}
{\mathbf{R}_{n\eps}(\zeta)}\rpar^2\cdot \ell_{nj}(\zeta),$$
we will have
$$f(\zeta)=g_\zeta(\zeta)=\sum_{j=1}^n f(\zeta_{nj})\tilde{L}_{nj}(\zeta).$$
By Lemma \ref{beloww} and the assumed regularity of $\d S$ we have the uniform estimate
\begin{equation}\label{kos}\bfR_{\eps n}(\zeta)\ge c\eps n,\qquad \zeta\in \drop_s,\end{equation}
where the positive constant $c$ depends on $s$. Here $S_s=S+D(0,s/\sqrt{n})$.

Since $\babs{\,\ell_{nj}\,}\le 1$, the reproducing property and \eqref{kos} gives
\begin{equation}\label{los}\left\|\,\tilde{L}_{nj}\,\right\|_{L^1(\drop_s)}\le \frac C n\end{equation}
where $C$ depends on $\eps$ and $s$.

Consider the function $\tilde{F}_n(\zeta)=\sum_{j=1}^n \babs{\,\tilde{L}_{nj}(\zeta)\,}$. Using the estimate \eqref{kos},
we have the estimate
$$\babs{\,\tilde{L}_{nj}(\zeta)\,}\le C\frac 1 {n^2}\babs{\,\bfK_{n\eps}(\zeta,\zeta_{nj})\,}^{\,2},\qquad z\in \drop_s$$
where $C$ depends on $\eps$ and $s$. Using this estimate, the argument leading to the estimate \eqref{ulv} shows that
\begin{equation}\label{opla}\norm{\,\tilde{F}_n\,}_{L^\infty(\drop_s)}\le C.\end{equation}

Now define $\tilde{T}:\C^n\to \lpar L^1+L^\infty\rpar\lpar \drop_s\rpar$ by $\tilde{T}(c)=\sum c_j\tilde{L}_j$.
Then \eqref{los} and \eqref{opla} show that $\norm{\,\tilde{T}\,}_{\ell^1_n\to L^1(\drop_s)}\le C/n$ and $\norm{\,\tilde{T}\,}_{\ell^\infty_n\to L^\infty(\drop_s)}\le C$.
So by interpolation we have
$$\norm{\,\tilde{T}\,}_{\ell^2_n\to L^2(\drop_s)}\le \frac C {\sqrt{n}}.$$

We have shown that an arbitrary $f\in\Pol_{n(1-2\eps)}$ has the representation $f=\tilde{T}(c)$ where $c_j=f(z_j)$. It follows that
$$\int_{S_s}\babs{\,f\,}^{\,2}\le C\frac 1 n \sum_{j=1}^n\babs{\,f(z_{nj})\,}^{\,2},\quad f\in \Pol_{n(1-2\eps)}.$$
By definition, this means that $\mathcal{F}\in M_{1-2\eps}$. 
\end{proof}

\subsection{Proof of theorems \ref{THAAA} and \ref{THA} for regular droplets} \label{mpf} Let $p=p_n$ be a moving point in $S$, and let $\calF=\{\calF_n\}$ be a family
of Fekete sets. We here assume that the boundary of $S$ is everywhere regular.

Fix $\rho_1>1$. By Theorem \ref{ipo} $\calF$ is then $\rho_1$-interpolating. We can then apply Theorem \ref{inter} with $\config=\calF$.
The result is that $D^+(\calF;p)\le \rho_1$ if $p\in \bulk S$ and $D^+(\calF;p)\le \rho_1/2$ if $p\in\d^* S$.

On the other hand, if $\rho_2<1$, then Theorem \ref{mro} shows that $\calF$ is a $M_{\rho_2}$-family. Hence Theorem \ref{Mclass} may be applied. We conclude that $D^-(\calF;p)\ge \rho_2$ if $p\in \Bulk S$ while $D^-(\calF;p)\ge \rho_2/2$ if $p\in\d^* S$.

We have shown that $\rho_2\le D^-(\calF;p)\le D^+(\calF;p)\le\rho_1$ if $p\in \bulk S$ and
$\rho_2/2\le D^-(\calF;p)\le D^+(\calF;p)\le\rho_1/2$ if $p\in\d ^* S$. Letting $\rho_2\uparrow 1$ and $\rho_1\downarrow 1$ finishes the proof. q.e.d.

\subsection{Proof of Theorem \ref{THB}} Let
 $q=q_n\in S$ denotes a moving point in the singular boundary regime $\d'S$. There is then
 a singular boundary point $p_*$ of type $\nu$ and a constant $M$ such that $\dist(q_n,p_*)\le
Mn^{-1/\nu}$. We claim that, for all large $n$, $q_n$ has distance at most
$T/\sqrt{n\Lap Q(p_*)}$ from the boundary, where $T$ is a constant depending on $M$.

To verify this, we consider the case when $p_*=0$ is a cusp of type $\nu$; then in a
suitable coordinate system, the boundary
of $S$ near $0$ looks roughly like $y^2=cx^\nu$ with a suitable constant $c>0$. (Cf. Section \ref{drp}.) Let $q_n=x_n+iy_n$ be a point in $S$ with $|q_n|\le Mn^{-1/\nu}$.
Then
$0\le x_n\le Mn^{-1/\nu}$ and $\dist(x_n,\d S)\le\const\cdot cx_n^\nu$. Hence
for all large $n$ we have the estimates
\begin{align*}\dist(q_n,\d S)&\le C\dist(x_n,\d S)\le C'x_n^{\nu/2}\\
&=C'M^{\nu/2}\cdot n^{-1/2}= T/\sqrt{n\Lap Q(p_*)},
\end{align*}
where $T=C'M^{\nu/2}\sqrt{\Lap Q(p_*)}$. The case when $p_*$ is a double point is similar.

Take a number $\La>T$ and
consider the disk
$$\tilde{A}_n(q,\La):=D\left(q_n,\frac \La {\sqrt{n\Lap Q(p_*)}}\right).$$
The image of $S\cap \tilde{A}_n(q,\La)$ under the appropriate rescaling
\begin{equation}\label{ner}z=e^{-i\theta_n}\sqrt{n\Lap Q(p_*)}(\zeta-q_n)\end{equation} is then contained in the
truncated strip
$$U:=\{x+iy;\, -T\le x\le T,\,-\La\le y\le \La\}.$$
Referring to the mapping \eqref{ner} we write
$$\tilde{\calF}_{n,q,\La}=\left\{z_{nj};\,\zeta_{nj}\in \tilde{A}_n(q,\La)\right\}$$
for the image of the Fekete points inside $\tilde{A}_n(q,\La)$.

By uniform separation of Fekete sets (see Theorem \ref{THAA})
 there is $c>0$ such that any two distinct points $z_{nj},z_{nk}\in
\tilde{\calF}_{n,q,\La}$ have distance $\babs{z_{nj}-z_{nk}}\ge c$.
This implies that there is a constant $C$ such that the number $N_n(q,\La,\calF):=\#\tilde{\calF}_{n,q,\La}$ satisfies
$$N_n(q,\La,\calF)\le C\operatorname{meas}(U)\le C'T\La.$$
It follows that the upper Beurling-Landau density satisfies
$$D^+(\calF,q)=\limsup_{\La\to\infty}\limsup_{n\to\infty}\frac {N_n(q,\La,\calF)}
{\La^{\,2}}\le \limsup_{\La\to\infty}\frac {CT}{\La}=0.$$
We have shown that $D(\calF,q)=0$, as desired. q.e.d.

\subsection{Proof of theorems \ref{THAAA} and \ref{THA} with singular boundary points}
In the presence of singular boundary points we will "cut away'' the singular boundary regime from the droplet and consider a slightly smaller set $S_n$, which is still large enough to contain most Fekete points. Here is the construction.

Fix a sequence $C_n$ of positive numbers with $C_n\to\infty$ as $n\to\infty$ ("slowly'').

We define "regularized droplets'' by
$$S_n:=S\setminus\bigcup D\left(p_*,C_nn^{-1/\nu_*}\right),$$
where the union extends over all singular boundary points $p_*$ of types $\nu_*$.

Now pick a family $\calF=\{\calF_n\}$ of Fekete sets and put
$$\calF_n':=\calF_n\cap S_{n},\quad m_n:=\#\calF_n'.$$
We order the points so that $\calF_n'=\{\zeta_{n1},\ldots,\zeta_{nm_n}\}$.

\begin{defn} Let $\config=\{\config_n\}$, $\config_n=\{\zeta_{nj}\}_{j=1}^{m_n}$ be any family of points of $S_n$ and $\rho$ a positive number (close to $1$).
\begin{enumerate}[label=(\roman*)]
\item $\config$ is said to be \textit{$\rho$-interpolating} if for all complex sequences $\{c_{nj}\}_{j=1}^{m_n}$ there exists a weighted polynomial $f\in\Pol_{n\rho}$ such that $f(\zeta_{nj})=c_{nj}$ for all $j$ and
    $\left\|\, f\,\right\|^{\,2}\le C\frac 1 n\sum_{j=1}^{m_n}\babs{\,c_{nj}\,}^{\,2}$.
\item We say that $\config$ is an \textit{$M_{\rho,C_n}$-family} if it is uniformly $2s$-separated for some $s>0$ and, with
$S_{n,s}=S_n+D(0,s/\sqrt{n})$,
$$\int_{S_{n,s}}\babs{\, f\,}^{\,2}\le \const \frac {1} n\sum_{j=1}^{m_n}\babs{\, f(\zeta_{nj})\,}^{\,2}.$$
\end{enumerate}
\end{defn}

The proofs of theorems \ref{THAAA} and \ref{THA} can now be finished as in the regular case, by noting that
theorems \ref{inter} and \ref{Mclass} have the following counterparts for the above types of families.

\begin{lem} \label{lalin2} Let $\config=\{\config_n\}$, $\config_n=\{\zeta_{nj}\}_{j=1}^{m_n}$ be a family contained in $S_n$. Then
\begin{enumerate}[label=(\roman*)]
\item If $\config$ is $\rho$-interpolating and $p$ is any moving point in $S$ then $D^+(\config,p)\le \rho$ if
$p\in\bulk S$ and $D^+(\config,p)\le \rho/2$ if $p\in\d^* S$.
\item \label{ll22} If $\config$ is of class $M_{\rho,C_n}$ and $p_n\in S_{n/2}$ for all $n$ then $D^-(\config,p)\ge \rho$ if
$p\in\bulk S$ and $D^-(\config,p)\ge \rho/2$ if $p\in\d^*S$.
\end{enumerate}
\end{lem}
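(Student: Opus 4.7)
The plan is to rerun the proofs of Theorems \ref{inter} and \ref{Mclass} essentially verbatim, with $\drop$ and $\drop_s$ replaced by $S_n$ and $S_{n,s}$ respectively; the conceptual framework (concentration operator $T_{n,\La}$ on $\Pol_{n\rho}$, trace asymptotics from Lemma \ref{flott}, Fischer's min-max principle) is untouched.

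For part (i), the new $\rho$-interpolating property yields uniform $2s$-separation of $\config$ by the same Bernstein-inequality argument used in Theorem \ref{seppo3}, which is pointwise on $\drop\supset S_n$. Applying the new interpolation property to the Kronecker data $c_{nk}=\delta_{jk}$ produces reproducing functions $f_{nj}\in\Pol_{n\rho}$ with $f_{nj}(\zeta_{nk})=\delta_{jk}$ and $\|f_{nj}\|^{\,2}\le C/n$. From here the Cauchy--Schwarz derivation of $\|f\|^{\,2}\le C'\langle T_{n,\La}f,f\rangle$ for $f$ in the linear span of those $f_{nj}$ with $\zeta_{nj}\in A_n(p,\La-s)$, the invocation of Fischer's principle, and the combination with Lemma \ref{flott} all transcribe without change.

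For part (ii), the main step is the analogue of Lemma \ref{phlegm}. As before, pick $f=\sum_{j=1}^{N_n^+}c_j\phi_j$ in the span of the top $N_n^+$ eigenfunctions of $T_{n,\La}$ so that $f$ vanishes on $\config_n\cap A_n(p,\La+s)$. The $M_{\rho,C_n}$-property followed by Lemma \ref{seppo2} gives
$$\int_{S_{n,s}}|f|^{\,2}\le C\frac{1}{n}\sum_{\zeta_{nj}\notin A_n(p,\La+s)}|f(\zeta_{nj})|^{\,2}\le Cs^{-2}\int_{S_{n,s}\setminus A_n(p,\La)}|f|^{\,2},$$
which, together with the eigenvalue identity $\sum\lambda_j^{n\rho}|c_j|^{\,2}=\int_{A_n(p,\La)}|f|^{\,2}$, forces $\lambda_{N_n^+}^{\,n\rho}\le 1-s^{\,2}/C<1$. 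The rest of the argument of Theorem \ref{Mclass} then yields the stated lower density.

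The one genuinely new point is a geometric verification: for $p_n\in S_{n/2}$, the disk $A_n(p,\La+s)$ is contained in $S_{n,s}$ for all large $n$, so that the chain of inequalities above is meaningful and the sum $\sum_{\zeta_{nj}\notin A_n(p,\La+s)}$ actually ranges over points of $\config_n\subset S_n$. This follows because $p_n$ lies at distance $\gtrsim C_{n/2}n^{-1/\nu_*}$ from every singular boundary point $p_*$ of type $\nu_*\ge 3$, while the excluded disks defining $S_n$ have radii $C_n n^{-1/\nu_*}$ and $A_n(p,\La+s)$ has radius $\sim(\La+s)/\sqrt{n}\ll n^{-1/\nu_*}$; so by choosing the sequence $C_n\to\infty$ to grow slowly enough at the outset (a freedom we have), the inclusion holds eventually. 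This minor geometric check is the main, and in fact the only, new obstacle; beyond it the proof is a line-by-line transcription of the regular-droplet arguments of Sections \ref{irc}--\ref{mpf}.
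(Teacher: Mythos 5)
Your proposal matches the paper's approach exactly: the paper itself only remarks that the estimates of Section \ref{conop} persist with $S$ replaced by $S_n$, and you supply precisely that transcription together with the one genuinely new point, namely the geometric check that the hypothesis $p_n\in S_{n/2}$ keeps the disks $A_n(p,\La+s)$ away from the excised disks $D(p_*,C_n n^{-1/\nu_*})$. One small imprecision: for $p\in\d^* S$ the disk $A_n(p,\La+s)$ is not literally contained in $S_{n,s}$ (it protrudes across $\d S$, exactly as $A_n(p,\La+s)\not\subset S_s$ in the regular case), but what your argument actually establishes --- that $A_n(p,\La+s)$ avoids the excised disks because their radii $C_n n^{-1/\nu_*}$ dwarf $n^{-1/2}$ when $\nu_*\ge 3$ --- is the containment the adapted proof of Lemma \ref{phlegm} really requires.
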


\begin{proof}[Remark on the proof] The estimates in Section \ref{conop} persist to hold in the present more general situation, with "$S$'' replaced by "$S_n$''. We omit repeating the simple details here.
\end{proof}

We now invoke Lemma \ref{beloww}, which implies that for all moving point $p=p_n\in S_{n,s}$.
In particular,
$\bfR_n(p_n)\ge cn$ where $c$ is a positive constant, whence (for small $\eps>0$)
\begin{equation}\label{body}\bfR_{\eps n}(\zeta_{nj})\ge c\eps n,\qquad (\zeta_{nj}\in\calF_n').\end{equation}
These estimates contain what we need in order to generalize the arguments from the regular cases above.

\begin{lem} \label{lalin} The family $\calF'$ is $\rho$-interpolating for each $\rho>1$ and of class $M_{\rho,C_n}$ when
$\rho<1$.
\end{lem}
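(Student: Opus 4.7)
My plan is to mirror the proofs of Theorem \ref{ipo} (interpolation) and Theorem \ref{mro} ($M$-family) from the regular case, with the key analytic input---a uniform lower bound for the reproducing one-point function---supplied by \eqref{body} at points of $\calF_n'$ and, for the $M$-family part, by the strengthening $\bfR_{\eps n}(\zeta)\ge c\eps n$ valid uniformly for $\zeta\in S_{n,s}$. The latter holds once $C_n$ has grown large enough that every point of $S_{n,s}$ satisfies the distance hypothesis of Lemma \ref{beloww}, which is eventually true since $C_n\to\infty$.

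For the interpolating claim, I would fix $\eps>0$ with $1+2\eps\le \rho$ and, for $j=1,\ldots,m_n$, set
$$L_{nj}(\zeta)=\left(\frac{\bfK_{\eps n}(\zeta,\zeta_{nj})}{\bfR_{\eps n}(\zeta_{nj})}\right)^2\ell_{nj}(\zeta)\in\Pol_{n(1+2\eps)},$$
where $\ell_{nj}$ is the Lagrangian of the full Fekete set $\calF_n$. Since $\ell_{nj}(\zeta_{nk})=\delta_{jk}$ for every $\zeta_{nk}\in\calF_n$, the operator $T(c)=\sum_{j=1}^{m_n}c_jL_{nj}$ solves the prescribed interpolation problem on $\calF_n'$. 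Substituting \eqref{body} for \eqref{ue}, the estimates $\|L_{nj}\|_{L^1}\le C/n$ and $\|\sum_j |L_{nj}|\|_{L^\infty}\le C$ emerge from the same sequence of steps (reproducing property, Theorem \ref{THAA}, Lemma \ref{seppo2}) used in the proof of Theorem \ref{ipo}, after which Riesz--Thorin gives $\|T\|_{\ell^2_{m_n}\to L^2}\le C/\sqrt n$, as required.

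For the $M_{\rho,C_n}$ claim, I would set $\tilde L_{nj}(\zeta)=(\bfK_{\eps n}(\zeta_{nj},\zeta)/\bfR_{\eps n}(\zeta))^2\ell_{nj}(\zeta)$ and use the Lagrange-type identity $f(\zeta)=\sum_{j=1}^n f(\zeta_{nj})\tilde L_{nj}(\zeta)$ from Theorem \ref{mro}. With the lower bound on $\bfR_{\eps n}$ holding uniformly on $S_{n,s}$, the $L^1(S_{n,s})$ and $L^\infty(S_{n,s})$ estimates on each $\tilde L_{nj}$ go through unchanged, and Riesz--Thorin yields
$$\int_{S_{n,s}}|f|^2\le C\,n^{-1}\sum_{j=1}^n |f(\zeta_{nj})|^2$$
for all $f\in\Pol_{n(1-2\eps)}$.

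The main obstacle is restricting this right-hand sum from $j=1,\ldots,n$ to $\zeta_{nj}\in\calF_n'$, i.e.\ eliminating the contribution of the ``bad'' Fekete points lying within $C_n n^{-1/\nu_*}$ of some singular boundary point. I expect to handle this via a cutoff construction: build a weighted polynomial $\chi_n\in\Pol_{\eps' n}$ with $|\chi_n|\ge 1/2$ on $S_{n,s}$ and $|\chi_n(\zeta_{nj})|$ negligibly small for every $\zeta_{nj}\in\calF_n\setminus\calF_n'$---for instance as a product of reproducing-kernel factors of $\Pol_{\eps' n}$ anchored in $S_n$, exploiting the exterior Gaussian decay proved in \cite{AKMW}. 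Applying the unrestricted inequality above to $f\chi_n\in\Pol_{n(1-2\eps+\eps')}$ and using $\int_{S_{n,s}}|f|^2\le 4\int_{S_{n,s}}|f\chi_n|^2$ then renders the bad terms negligible. After adjusting $\eps,\eps'$ one obtains the $M_\rho$-inequality for any prescribed $\rho<1$.
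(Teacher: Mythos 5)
Your first two paragraphs coincide with what the paper actually does: the author's ``proof'' of Lemma \ref{lalin} is literally the remark that one reruns Sections \ref{irc} and \ref{mrc} with $\calF_n'$ in place of $\calF_n$, with \eqref{ue} replaced by \eqref{body} in the interpolation half and with the uniform bound $\bfR_{\eps n}(\zeta)\ge c\eps n$ on $S_{n,s}$ (from Lemma \ref{beloww}, once $C_n$ is large) replacing \eqref{kos} in the $M$-half. You are also right to flag the point the paper passes over in silence: the Lagrange identity $f=\sum_{j=1}^{n}f(\zeta_{nj})\tilde L_{nj}$ necessarily runs over \emph{all} $n$ Fekete points, so the argument as written only yields $\int_{S_{n,s}}\babs{f}^{2}\le Cn^{-1}\sum_{j=1}^{n}\babs{f(\zeta_{nj})}^{2}$, while the definition of an $M_{\rho,C_n}$-family demands the sum restricted to $j\le m_n$.

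Your proposed repair, however, cannot work: no $\chi_n\in\Pol_{\eps' n}$ with $\|\chi_n\|_{L^\infty}\le M$ can satisfy $\babs{\chi_n}\ge 1/2$ on $S_{n,s}$. Write $\chi_n=qe^{-\eps' nQ/2}$ and $u=\frac{2}{\eps' n}\log\babs{q}$. The maximum principle for weighted polynomials (Lemma \ref{damp} and the obstacle function) gives $u\le Q+\frac{2}{\eps' n}\log M$ on $S$, while the assumed lower bound gives $u\ge Q-\frac{2}{\eps' n}\log 2$ on $S_{n,s}$. On any disk $D(z_0,r)\subset \Int S\cap S_{n,s}$ containing no zero of $q$, $u$ is harmonic, and comparing its mean value over $\d D(z_0,r)$ with that of the strictly subharmonic $Q$ (where $\Lap Q\ge c>0$) yields $0\ge cr^{2}-O(1/n)$. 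Hence $q$ must vanish in every disk of radius $\gg 1/\sqrt{n}$ inside $S_{n,s}$, contradicting $\babs{\chi_n}\ge1/2$ there. (Concretely, a product of reproducing-kernel factors is a bump of width $\sim1/\sqrt{n}$ --- the opposite of a cutoff equal to $1$ on most of the droplet.) So the discarded points must be handled by other means: either via off-diagonal decay of $\bfK_{\eps n}(\zeta,\zeta_{nj})$ for $\zeta\in S_{n,s}$ and $\zeta_{nj}$ in the removed disks (which the paper claims to avoid), or --- more in the spirit of the text --- by keeping the full sum over $\calF_n$ in the $M$-inequality and checking that in its only application (Lemma \ref{lalin2} \ref{ll22}, via the analogue of Lemma \ref{phlegm}) the disk $A_n(p,\La+s)$ with $p\in S_{n/2}$ is eventually disjoint from the removed region, so the extra terms do no harm. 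As it stands, the third paragraph of your proposal is a genuine gap.
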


\begin{proof}[Remark on the proof]
The proof is as before (sections \ref{irc} and \ref{mrc}) by using $\calF_n'$ instead of $\calF_n$ and applying the estimate \eqref{body}.
\end{proof}

We now finish the proofs of theorems \ref{THAA} and \ref{THA}.
 If $p=p_n$ is a moving point in $S_{n/2}$ (to insure that Lemma \ref{lalin2} \ref{ll22} can be applied)
then the families $\calF$ and $\calF'$ have the same upper and lower densities at $p$, i.e.
$D^+(\calF,p)=D^+(\calF',p)$ and $D^-(\calF,p)=D^-(\calF',p)$. Moreover $D^+(\calF',p)$ is at least $1/2$ if $p\in\d^* S$ and at least $1$ if $p\in\Bulk S$ by lemmas \ref{lalin2} and \ref{lalin}. By the same token,
$D^-(\calF',p)$ is at least $1$ if $p\in\Bulk S$ and at least $1/2$ if $p\in \d^*S$. q.e.d.

\section{Concluding remarks: real-analyticity and boundary regularity} \label{concrem}
In the foregoing, we have frequently made use of analytic properties of the potential and of the
boundary of a droplet. We here give some additional remarks on the nature of these properties. We shall also give a short proof that the relevant properties of the boundary
follow from our standing assumptions of the potential, via Sakai's theorem from \cite{Sa}. This was noted in \cite{Sa}, cf. \cite{HS,LM} for other discussions.

 The next section gives a brief overview of the role which real-analyticity properties play in our analysis; after that, we will turn to the details behind Sakai's regularity theorem.

\subsection{The role of real-analyticity} \label{trra}
Recall our standing assumption that $Q$ be real-analytic, except that it is possible $+\infty$ in portions of the plane (and everywhere l.s.c.). We now give some additional comments on the nature of this assumption.

Amongst our main results, Theorem \ref{THAA} holds under the more general assumption that $Q$ be $C^2$-smooth, but our proofs of the other main results all use real-analyticity in one way or the other.

For a given potential, the determination of the droplet is a hard problem:
the inverse problem of potential theory. It is therefore preferable to seek results which
do not rely on the detailed knowledge of an individual droplet, but rather on general properties which can be read off directly from the potential. To our knowledge, the only manageable condition which implies smoothness of the boundary uses Sakai's theorem, and thus ultimately some assumption involving real-analyticity.

On the other hand, the class of potentials $Q$ satisfying our standing assumptions (or perhaps slight, $n$-dependent perturbations thereof) are natural for many applications, e.g. in conformal field theory, cf. \cite{KM}. In general, $n$-dependent potentials of the form $Q_n=Q+h/n$, where $h$ is a fixed, possibly non-analytic function, are also quite useful, e.g. for studying statistical properties of Coulomb gases, see \cite{AM,BBNY}. Sakai's regularity theorem can be applied to this situation, since $Q_n$ and $Q$ have
asymptotically the same droplets.

In spite of questions concerning practical applicability, it might seem conceivable that the density results in theorems \ref{THAAA}, \ref{THA} should hold, say for a $C^2$-smooth potential such that the boundary be everywhere $C^1$-regular. We do not wish to speculate too much about that, but remark that our present methods use real-analyticity
already when we define limiting kernels at a moving point: the normal-families argument from \cite{AKM} uses
this assumption. Moreover, the proofs of several other relevant results from \cite{AKM} use real-analyticity in one way or the other.

\subsection{The obstacle function} We give a few definitions, to prepare for our proof of boundary regularity.

Given a Borel measure $\mu$ on $\C$, we denote its logarithmic potential by
$$U^\mu(\zeta)=\int_\C\log\frac 1 {\left|\,\zeta-\eta\,\right|}\, d\mu(\eta).$$
The weighted energy $I_Q[\mu]$ of \eqref{egy} can be written
$I_Q[\mu]=\int U_\mu\, d\mu+\int Q\, d\mu$.

Define the "Robin constant'' $\gamma$ in external potential $Q$ as the minimum of $Q+2U^\sigma$ over $\C$.
By the \textit{obstacle function}, we mean the
subharmonic function
$$\eqpot(\zeta)=-2U^\sigma(\zeta)+\gamma.$$
It is known that $\eqpot=Q$ on $S$ while $\eqpot$ is harmonic on $S^c$ and
has a Lipschitz continuous gradient on $\C$. (See \cite{ST}.) In particular, the droplet
$S$ is contained in the coincidence set $\{Q=\eqpot\}$ and the equilibrium measure can be written
$$d\sigma=\Lap Q\1_S\, dA=\Lap\eqpot\, dA.$$

\subsection{Sakai's regularity theorem} Consider the complement $S^c$ of the droplet. We shall show that locally, close to a point $p_*\in \d S$, the set $S^c$ has a
\textit{Schwarz function} $s$. This means that there
exists a neighbourhood $N$ of $p_*$ an analytic function $s$ on $N\cap S^c$ which extends continuously
to $N\cap \d S$ and satisfies $s(z)=\bar{z}$ there.

Let
$\fii$ be a conformal map of the disk $D(0,1)$ onto a component
$U$ of $S^c$.
Given the (local) existence of a Schwarz function as above, it follows from Sakai's theorem in \cite{Sa} that $\fii$ extends analytically across the boundary of $D$. (Cf. \cite{S,S1}.)

In order to deduce the desired analytical properties of the boundary (see Section \ref{ptd}) it thus remains to show that there exists a local Schwarz function.

To this end, we can
assume that
$p_*=0\in \d S$.
Recalling that we have assumed that $Q$ be real-analytic and strictly subharmonic in a neighbourhood of $S$, we
let $D$ denote a small enough open disk centered at the origin and write
$$Q(z)=\sum_{j,k=0}^\infty a_{jk}z^j\bar{z}^k\quad ,\quad A(z,w)=\sum a_{jk}z^jw^k,\quad (z,w\in D).$$

 In $D\setminus S$, we have
 that $\d\eqpot$ coincides with a holomorphic function whose boundary values on $\d S$ coincide
 with those of $\d Q$. Define a Lipschitz function on $D\times D$ by
 $$G(z,w)=\d_1 A(z,w)-\d \eqpot(z).$$
Notice that $G(0,0)=0$ and $\d_2 G(0,0)=\Delta Q(0)>0$.
Hence, by a version of the implicit function theorem (e.g. \cite{DR}, Section 1E) there is a unique Lipschitzian function
$s(z)$ defined on $D$ such that $G(z,s(z))=0$. This implies that $\bar{\d}s(z)=0$ a.e.
on $D\setminus S$, so $s$ is
holomorphic there. It is also clear that $s$ is continuous up to $(\d S)\cap D$ and that
$s(z)=\bar{z}$ for $z\in \d S$, so $s$ is our required Schwarz function. $\qed$

\end{document}